\documentclass{amsart}

\usepackage{amssymb,tikz}
\usepackage{amscd}
\usepackage{latexsym}
\usepackage{amsfonts}
\usepackage{amsmath}
\usepackage[latin1]{inputenc}

\pagestyle{myheadings}


\newtheorem{proposition}{Proposition}
\newtheorem{lemma}{Lemma}
\newtheorem{theorem}{Theorem}
\newtheorem{corollary}{Corollary}
\newtheorem{definition}{Definition}
\newtheorem{remark}{Remark}
\newtheorem{conjecture}{Conjecture}

\newcommand{\eproof}{\begin{flushright} $\square$ \end{flushright}}

\DeclareMathOperator{\supp}{Supp}
\DeclareMathOperator{\End}{End}

\DeclareMathOperator{\tr}{Tr}
\DeclareMathOperator{\dil}{Li_2}

\DeclareMathOperator{\Ob}{Ob}

\DeclareMathOperator{\RAMAN}{\Psi}
\DeclareMathOperator{\FQDILOG }{\phi}

\newcommand{\B}{A}

\newcommand{\cff}{\psi}
\newcommand{\CLA}{c_\la}
\newcommand{\cla}{c_\la}
\newcommand{\COMPLEXS}{\mathbb C}

\newcommand{\IHG}[1]{{\Psi_#1}}
\newcommand{\IMUN}{\mathsf i}
\newcommand{\imun}{\mathsf i}

\newcommand{\INTEGERS}{\mathbb Z}

\newcommand{\la}{\mathsf{b}}

\newcommand{\MOM}{\mathsf p}

\newcommand{\POS}{\mathsf q}
\newcommand{\PTOLEMY}{\mathsf T}

\newcommand{\QDILOG}{\Phi_{\la}}
\newcommand{\QDILOGI}{\bar\Phi_{\la}}

\newcommand{\REALS}{\mathbb R}

\newcommand{\sfu}{\mathsf u}
\newcommand{\sfv}{\mathsf v}

\newcommand{\wv}{\vec{\mathsf w}}


\newcommand{\im}{\mathop{\fam0 Im}\nolimits}

\newcommand{\re}{\mathop{\fam0 Re}\nolimits}

\newcommand{\Hom}{\mathop{\fam0 Hom}\nolimits}

\newcommand{\vol}{\mathop{\fam0 Vol}\nolimits}

\newcommand{\sign}{\mathop{\fam0 sign}\nolimits}

\newcommand{\bC}{{\mathbb C}}
\newcommand{\bR}{{\mathbb R}}

\newcommand{\C}{\mathcal{C}}

\newcommand{\Z}{{\mathbb Z}}
\newcommand{\bZ}{\Z{}}

\newcommand{\D}{{\mathcal D}}
\newcommand{\ra}{\mathop{\fam0 \rightarrow}\nolimits}

\renewcommand{\L}{{\mathcal L}}

\newcommand{\SU}{\mathop{\fam0 SU}\nolimits}


\newcommand{\CC}{\mathcal{B}}

\renewcommand{\S}{\mathcal{S}}

\newcommand{\OBJ}{\mathrm{Obj}}
\newcommand{\mor}{\mathrm{Mor}}
\newcommand{\wf}{\mathrm{WF}}

\newcommand{\myhom}{\mathop{\theta}}
\newcommand{\parfun}{\mathop{Z_\hbar}}
\newcommand{\cont}{\mathsf{E}}

\newcommand{\sA}{\mathsf{A}}
\newcommand{\sB}{\mathsf{B}}

\begin{document}

\title{A TQFT from quantum Teichm\"{u}ller theory}

\author{J{\o}rgen Ellegaard Andersen}
\address{Center for Quantum Geometry of Moduli Spaces\\
        University of Aarhus\\
        DK-8000, Denmark}
\email{andersen@qgm.au.dk}

\author{Rinat Kashaev}
\address{University of Geneva\\
2-4 rue du Li\`evre, Case postale 64\\
 1211 Gen\`eve 4, Switzerland}
\email{rinat.kashaev@unige.ch}

\thanks{Supported in part by the center of excellence grant ``Center for quantum geometry of Moduli Spaces" from the Danish National Research Foundation, and Swiss National Science Foundation}

\begin{abstract}
By using quantum Teichm\"uller theory, we construct a one parameter family of TQFT's on the categroid of admissible leveled shaped 3-manifolds.
\end{abstract}

\maketitle

\section{Introduction}\label{intro}

Topological Quantum Field Theories in dimension $2 +1$ were discovered and axiomatized by Atiyah~\cite{At}, Segal~\cite{S} and Witten~\cite{W}, and first constructed by Reshetikhin and Turaev \cite{RT1,RT2,T}. These TQFT's are constructed by combinatorial means from the finite dimensional representation category of the quantum group $U_q(sl(2,\bC))$, where $q$ is a root of unity, and are
defined on a cobordism category which is a slight extension of the $2+1$ dimensional cobordism category, where the three manifolds in question are framed and their two dimensional boundaries have the so called extended structures, which include the choice of a Lagrangian subspace of the first homology. Reshetikhin and Turaev used surgery on links and Kirby calculus to show that their TQFT's are well defined. In the particular case of 3-manifolds given as mapping cylinders, the Reshetikhin--Turaev TQFT's  produce representations of centrally extended surface mapping class groups on finite dimensional vector spaces \cite{T}. These TQFT's and their associated representations of centrally extended surface mapping class groups were also constructed by purely topological means by Blanchet, Habegger, Masbaum and Vogel in \cite{BHMV1}, \cite{BHMV2}.

Quantum Teichm\"{u}ller theory, as developed by Kashaev \cite{K1}, and Chekhov and Fock \cite{CF}, produces unitary representations of centrally extended mappings class groups of punctured surfaces on infinite-dimensional Hilbert spaces. The central ingredients in this theory are, on the one hand, Penner's cell decomposition of decorated Teichm\"{u}ller space and the associated Ptolemy groupoid \cite{Pen1} with its many applications summaries in \cite{Pen2} and on the other Faddeev's quantum dilogarithm \cite{F} which finds its origins and applications in quantum integrable systems \cite{FKV,BMS1,BMS2,Te}. Faddeev's quantum dilogarithm has already been used  in formal state-integral constructions of perturbative invariants of three manifolds in the works \cite{H1,H2,DGLZ,DFM,D}, but without addressing the  important questions of convergence or independence of the choice of triangulation.

In this paper, we address the question of promoting quantum Teichm\"{u}ller theory to a TQFT. The main obstacle in constructing such TQFT comes, of course, from the fact that the target category cannot be the category of finite dimensional vector spaces, and one has to make an appropriate choice which guarantees that the functor is well defined, in particular, that the relevant integrals always converge. Unfortunately or fortunately, the obvious choice of the category of Hilbert spaces and bounded operators is not enough in our case: we have to go to the framework of the categroid of temperate distributions.  The starting point is the combinatorial setting of triangulated three manifolds. As our construction verifies invariance under only those changes of triangulations which do not remove or add vertices, we are naturally led to consideration of the cobordism category of pseudo 3-manifolds. As a consequence of the fact that distributions cannot always be multiplied, our TQFT is well defined on only  pseudo 3-manifolds with trivial second homology group of the complement of the vertices. In addition,  an extra structure (called shape structure) on our cobordisms is needed which is closely related to the angle structures  on ideal triangulations of hyperbolic manifolds introduced by Casson, Rivin and Lackenby \cite{C,R,L}. More precisely, a shape structure is a certain equivalence class of angle structures. Also, in analogy with the theory of Reshetikhin and Turaev, one more extra structure (called level) is used in order to handle the phase ambiguity of  the theory.  

In the rest of this introduction, we define all these structures and our TQFT in precise terms.

\subsection{Oriented triangulated pseudo $3$-manifolds}
Consider the disjoint union of finitely many copies of the standard $3$-simplices in $\REALS^3$, each having totally ordered vertices. Notice, that the vertex order induces orientations on edges. Identify  some codimension-1 faces of this union
in pairs by vertex order preserving and orientation reversing affine homeomorphisms called \emph{gluing homeomorphisms}. The quotient space $X$ is a specific CW-complex with oriented edges which will be called an oriented \emph{triangulated pseudo $3$-manifold}. For $i\in\{0,1,2,3\}$, we will denote by $\Delta_i(X)$ the set of $i$-dimensional cells in $X$. For any $i>j$, we also denote

\[
\Delta_{i}^{j}(X)=\{(a,b)\vert\ a\in\Delta_i(X),\ b\in\Delta_j(a)\}
\]
with natural projection maps
\[
\phi_{i,j}\colon\Delta_{i}^{j}(X)\to\Delta_{i}(X),\quad \phi^{i,j}\colon\Delta_{i}^{j}(X)\to\Delta_{j}(X).
\]
We also have the canonical partial boundary maps
\[
\partial_i\colon \Delta_{j}(X)\to \Delta_{j-1}(X),\quad 0\le i\le j,
\]
which in the case of a $j$-dimensional simplex $S=[v_0,v_1,\ldots,v_{j}]$ with ordered vertices $v_0,v_1,\ldots,v_{j}$ in $\REALS^3$ take the form
\[
\partial_iS=[v_0,\ldots,v_{i-1},v_{i+1},\ldots,v_j],\quad i\in\{0,\ldots,j\}.
\]
\subsection{Shaped 3-manifolds}
Let $X$ be an oriented triangulated pseudo 3-manifold.

\begin{definition}
A \emph{Shape structure} on $X$  is an assignment  to each edge of each tetrahedron of $X$ a positive number called the \emph{dihedral angle}
$$
\alpha_X\colon \Delta_{3}^{1}(X)\to\bR_{+}
$$
so that
the sum of the three angles at the edges from each vertex of each tetrahedron is $\pi$. An oriented triangulated pseudo 3-manifold with a shape structure will be called a \emph{shaped pseudo 3-manifold}. We denote the set of shape structures on $X$ by $S(X)$.
\end{definition}

It is straightforward to see that dihedral angles at opposite edges of any tetrahedron are equal, so that each tetrahedron acquires three dihedral angles associated to three pairs of opposite edges which sum up to $\pi$. The usual complex shape variable for a tetrahedron $[v_0,v_1,v_2,v_3],$ with dihedral angles $\alpha,\beta,\gamma$ associated to the edges $[v_0,v_1], [v_0,v_2],[v_0,v_3]$, is
$$q = \frac{\sin(\beta)}{\sin(\gamma)} e^{\imun\alpha}.$$

\begin{definition}
To each shape structure on $X$, we associate a \emph{Weight function}
$$
\omega_X : \Delta_1(X) \ra \bR_{+},
$$
which associates to each edge $e$ of $X$ the sum of dihedral angles around it
$$
\omega_X(e)=\sum_{a\in(\phi^{3,1})^{-1}(e)}\alpha_X(a).
$$
\end{definition}

An \emph{angle structure} on a closed triangulated pseudo $3$-manifold $X$, introduced by Casson, Rivin and Lackenby \cite{C,R,L}, is a shape structure whose weight function takes the value $2\pi$ on each edge.
\begin{definition}
An edge $e$ of a shaped pseudo 3-manifold $X$  will be called \emph{balanced} if it is internal and $\omega_X(e)=2\pi$. An edge which is not balanced will be called \emph{unbalanced}. A shaped pseudo 3-manifold will be called \emph{fully balanced} if all edges of $X$ are balanced.
\end{definition}
Thus, a shape structure is an angle structure if it is fully balanced. By definition, a shaped pseudo 3-manifold can be fully balanced only if its boundary is empty.

\subsection{The $\bZ/3\bZ$-action on pairs of opposite edges of tetrahedra}
Let $X$ be an oriented triangulated pseudo 3-manifold.
For each vertex of each tetrahedron of $X$, the orientation induces a cyclic order on the three edges meeting at the vertex. Moreover, this cyclic order induces a cyclic order on the set of pairs of opposite edges of the tetrahedron. We denote by $\Delta_{3}^{1/p}(X)$ the set of pairs of opposite edges of all tetrahedra. Set-theoretically, it is the quotient set of $\Delta_{3}^{1}(X)$ with respect to the equivalence relation given by all pairs of opposite edges of all tetrahedra. We denote by
\[
p\colon \Delta_{3}^{1}(X)\to\Delta_{3}^{1/p}(X)
\]
the corresponding quotient map, and  we define a skew-symmetric function
\[
\varepsilon_{a,b}\in\{0,\pm1\},\quad \varepsilon_{a,b}=-\varepsilon_{b,a},\quad a,b\in\Delta_{3}^{1/p}(X),
\]
with the value $\varepsilon_{a,b}=0$ if the underlying tetrahedra are distinct and the value $\varepsilon_{a,b}=+1$ if the underlying tetrahedra coincide and the pair of opposite edges associated with $b$ is cyclically preceded by that of $a$. Notice that a shape structure on $X$ descents to a positive real valued function on the set $\Delta_{3}^{1/p}(X)$.

\subsection{Leveled shaped 3-manifolds}
In order for our TQFT to be well defined, we need to extend the shape structure by a real parameter. This is the analog of framing number in the context of the Reshetikhin--Turaev TQFT.

\begin{definition}
A \emph{leveled shaped pseudo 3-manifold} is a pair $(X,\ell_X)$ consisting of a shaped pseudo 3-manifold $X$ and a real number $\ell_X\in\bR$ called the \emph{level}. We denote by $LS(X)$ the set of all leveled shaped structures on $X$.
\end{definition}

As will be demonstrated below, our TQFT will be well defined on a certain sub-categroid\footnote{See Appendix 
C for the definition of Categroids.} of the category of leveled shaped pseudo $3$-manifolds. Moreover, it will enjoy a certain gauge-invariance we will now describe.

\begin{definition}\label{gaugeeq}
Two leveled shaped pseudo 3-manifolds $(X,\ell_X)$ and $(Y,\ell_Y)$ are called \emph{gauge equivalent} if there exist an isomorphism $h\colon X\to Y$ of the underlying cellular structures and a function
\[
g\colon \Delta_1(X)\to \REALS
\]
such that
\[
\Delta_1(\partial X)\subset g^{-1}(0),
\]
\[
\alpha_Y(h(a))=\alpha_X(a)+\pi\sum_{b\in\Delta_{3}^{1}(X)} \varepsilon_{p(a),p(b)}g(\phi^{3,1}(b)),\quad \forall a\in\Delta_{3}^{1}(X),
\]
and
\[
\ell_Y=\ell_X+\sum_{e\in\Delta_1(X)}g(e)\sum_{a\in(\phi^{3,1})^{-1}(e)}
\left(\frac13-\frac{\alpha(a)}\pi\right).
\]
\end{definition}

It is easily seen that the weights on edges are gauge invariant in the sense that
\[
\omega_{X}=\omega_Y\circ h.
\]

\begin{definition}
Two leveled shape structures $(\alpha_X,\ell_X)$ and $(\alpha'_X,\ell'_X)$ on a pseudo $3$-manifold $X$ are called \emph{based gauge equivalent} if they are gauge equivalent in the sense of Definition \ref{gaugeeq}, where isomorphism $h:X \ra X$ in question is the identity.
\end{definition}

We observe that the (based) gauge equivalence relation on leveled shaped pseudo $3$-manifolds induces a (based) gauge equivalence relation on shaped pseudo $3$-manifolds under the map which forgets the level. Let $N_0(X)$ be a sufficiently small tubular neighborhood of $\Delta_0(X)$. Then $\partial N_0(X)$ is a two dimensional surface, which is possibly disconnected and possibly with boundary, if $\partial X \neq \varnothing$. Let the set of gauge equivalence classes of based leveled shape structures on $X$ be denoted $LS_r(X)$ and let $S_r(X)$ denote the set of gauge equivalence classes of based shape structures on $X$.

Let us now describe the based gauge equivalence classes of (leveled) shape structures on a pseudo $3$-manifold $X$. In order to do this we will need the notion of generalized shape structure.

\begin{definition}\label{genshape}
A \emph{generalized shape} structure on $X$ is an assignment  of a real number to each edge of each tetrahedron, so that
the sum of three numbers at the edges from each vertex of each tetrahedron is $\pi$. Leveled generalized shaped structures as well as their gauge equivalence are defined analogously to leveled shaped structures and their gauge equivalence. The space of based gauge equivalence classes of generalized shape structures will be denoted $\tilde{S}_r(X)$ and the space of based leveled generalized shape structures is denoted $\widetilde{LS}_r(X)$.
\end{definition}

We observe that $S_r(X)$ is an open convex subset of $\tilde{S}_r(X)$.

The map which assigns  to a generalized shape structure $\alpha_X\in \tilde{S}(X)$ the corresponding weight function $\omega_X : \Delta_1(X) \ra \bR$ is denoted
$$
\tilde{\Omega}_X : \tilde{S}(X) \ra \bR^{\Delta_1(X)}.
$$
Since this map is gauge invariant it induces a unique map
$$
\tilde{\Omega}_{X,r} : \tilde{S}_r(X) \ra \bR^{\Delta_1(X)}.
$$

\begin{theorem}\label{Shape}
The map
$$
\tilde{\Omega}_{X,r} : \tilde{S}_r(X) \ra \bR^{\Delta_1(X)},
$$
 is an affine $H^1(\partial N_0(X), \bR)$-bundle.
The space $\tilde{S}_r(X)$ carries a Poisson structure whose symplectic leaves are the fibers of $\tilde{\Omega}_{X,r}$ and which is identical to the Poisson structure induced by the $H^1(\partial N_0(X), \bR)$-bundle structure. The natural projection map from $\widetilde{LS}_r(X) $ to $\tilde{S}_r(X)$ is an affine  $\bR$-bundle which restricts to the affine $\bR$-bundle $LS_r(X)$ over $S_r(X)$.

If $h:X \ra Y$ is an isomorphism of cellular structures, then we get an induced Poisson isomorphism $h^* : \tilde{S}_r(Y) \ra \tilde{S}_r(X)$ which is an affine bundle isomorphism with respect to the induced group homomorphism
\[
h^* : H^1(\partial N_0(Y), \bR)\ra H^1(\partial N_0(X), \bR)
 \]
 and which maps $S_r(Y)$ to $S_r(X)$. Furthermore, $h$ induces an isomorphism
\[
h^*\colon \widetilde{LS}_r(Y) \to \widetilde{LS}_r(X)
 \]
 of affine $\bR$-bundles covering the map
 \(
 h^*\colon \tilde{S}_r(Y) \to \tilde{S}_r(X)
 \)
  which also maps $LS_r(Y)$ to $LS_r(X)$.

\end{theorem}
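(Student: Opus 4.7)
The plan is to unpack the affine, bundle, and Poisson structures on $\tilde{S}_r(X)$ via the cellular (co)chain complex of the triangulated link surface $\Sigma := \partial N_0(X)$, and then to deduce functoriality from the purely combinatorial nature of every ingredient.

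I would first observe that $\tilde{S}(X)$ is a finite-dimensional affine space of dimension $2|\Delta_3(X)|$, cut out of $\bR^{\Delta_3^{1/p}(X)}$ by the $|\Delta_3(X)|$ linear equations forcing the three pair-values in each tetrahedron to sum to $\pi$. The gauge shift in Definition 1 is manifestly linear in $g$ and so defines a linear action of the vector space $G := \{g \in \bR^{\Delta_1(X)} : g|_{\Delta_1(\partial X)} = 0\}$ on $\tilde{S}(X)$. A direct calculation using the cyclic identity $\sum_{i} \varepsilon_{p_i, p_{i+1}} = 0$ in each tetrahedron shows that the sum-to-$\pi$ condition is preserved; the orientation-reversing convention on gluings then ensures that the gauge contributions cancel along identified pairs of edges, yielding the gauge invariance of the weight $\omega_X$. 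Hence $\tilde{\Omega}_{X,r}\colon \tilde{S}_r(X) \to \bR^{\Delta_1(X)}$ is well defined.

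The heart of the argument is to identify each fiber of $\tilde{\Omega}_{X,r}$ as an $H^1(\Sigma, \bR)$-torsor. The surface $\Sigma$ has one triangle per (tetrahedron, vertex-of-tetrahedron) pair, and by the opposite-edge equality a shape structure becomes a corner-angle assignment with per-triangle sum $\pi$; the weight $\omega_X(e)$ equals the sum of such corner angles at the vertices of $\Sigma$ lying over $e$, up to the combinatorial factor counting endpoints. A tangent vector in the fiber of $\tilde{\Omega}_X$ is thus a real corner-cochain vanishing on each triangle and with zero sum at every vertex of $\Sigma$; dualizing, such data corresponds canonically to a 1-cocycle on $\Sigma$ with real coefficients. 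The gauge functions $g \in G$ are 0-cochains on $\Sigma$ vanishing near $\partial\Sigma$, and the explicit gauge formula of Definition 1, including the factor $\pi$ and the skew-symmetric sign $\varepsilon$, is precisely $\pi$ times the simplicial coboundary. This identifies each fiber of $\tilde{\Omega}_{X,r}$ with an $H^1(\Sigma, \bR)$-torsor; surjectivity onto $\bR^{\Delta_1(X)}$ follows by constructing an explicit section, e.g.\ by realizing each tetrahedron geometrically with prescribed edge weights.

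For the Poisson structure, the bracket $\{\alpha(a),\alpha(b)\} := \pi\,\varepsilon_{p(a),p(b)}$ on corner angles descends to $\tilde{S}_r(X)$ by gauge invariance of $\varepsilon$ and annihilates each pulled-back weight, so its symplectic leaves are the fibers of $\tilde{\Omega}_{X,r}$. Since $\varepsilon$ is also the combinatorial matrix computing the cup-product pairing on $H^1(\Sigma, \bR)$ (adjacent pairs of opposite edges in a tetrahedron correspond to adjacent corners of a common link triangle), this bracket agrees with the $H^1$-induced one. The bundle $\widetilde{LS}_r(X) \to \tilde{S}_r(X)$ is an affine $\bR$-bundle because the level shift in Definition 2 is affine in $g$ for fixed $\alpha$, and it restricts to $LS_r(X) \to S_r(X)$ since positivity of angles is an open condition. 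Functoriality under a cellular isomorphism $h\colon X \to Y$ is automatic: every ingredient (shape, level, weight, $\Sigma$, $H^1(\Sigma)$, the bracket) is a cellular invariant and transports canonically along $h$. The main technical obstacle I anticipate is the Step-2 matching of the corner-angle gauge action with the simplicial coboundary on $\Sigma$: the orientation-induced signs, the factor of $\pi$, and the cyclic convention defining $\varepsilon$ must all line up cleanly, and verifying that the bracket really is the cup product requires careful Alexander--Whitney bookkeeping on the triangulated surface.
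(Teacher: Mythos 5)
Your overall strategy coincides with the paper's: both arguments work on the link surface $\Sigma=\partial N_0(X)$, use the Neumann--Zagier form $\{\alpha_a,\alpha_b\}=\varepsilon_{a,b}$, treat the gauge action as a Hamiltonian action of $\bR^{\Delta_1(X)}$ with moment map $\tilde\Omega_X$ (so that $\tilde S_r(X)$ is a Poisson reduction whose leaves are the fibers), identify the reduced fibers with $H^1(\Sigma,\bR)$ by pairing angle differences against normal $1$-cycles, and verify the compatibility of the bracket with the intersection pairing by the same local picture of two curves crossing at the midpoint of an edge shared by two link triangles. The treatment of the level bundle and of functoriality under a cellular isomorphism is likewise the same routine observation in both.

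The genuine gap is in your Step 2, and you name it yourself: the claim that a tangent vector to a fiber of $\tilde\Omega_X$ ``corresponds canonically to a 1-cocycle on $\Sigma$'' and that the gauge action is exactly $\pi$ times the simplicial coboundary is asserted, not proved, and it is precisely the content of the theorem. The corner-to-opposite-edge dictionary does not send the condition ``corner values sum to zero in each triangle'' to the cocycle condition (the triangle sum is unsigned, the coboundary is signed), nor is ``zero sum at each vertex of $\Sigma$'' automatic for a $1$-cochain, so the proposed cochain-level isomorphism does not exist in the naive form and the required bookkeeping is nontrivial. The paper closes this step by a different mechanism that you should adopt or reproduce: it defines the map $H_a\colon\tilde\Omega_{X,r}^{-1}(m)\to H^1(\partial N_0(X),\bR)$ directly by summing $\pm(a-a')$ over the normal segments of a curve $\gamma$ (well-definedness on homology follows from the two linear constraints), proves $\{H_a^*(\gamma_1),H_a^*(\gamma_2)\}=\gamma_1\cdot\gamma_2$, deduces injectivity of $H_a$ from this compatibility with the non-degenerate intersection form, and obtains surjectivity from a dimension count --- thereby never needing an explicit chain-level identification. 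A secondary, smaller gap: from ``the bracket annihilates each pulled-back weight'' you may only conclude that the symplectic leaves are \emph{contained in} the fibers of $\tilde\Omega_{X,r}$; equality requires the moment-map computation (the paper's verification that the contraction of the gauge vector field $X_e$ with $\omega$ equals $-d\omega_X(e)$), which you should include.
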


This theorem will be proved in Section~\ref{reduction}, where we will explain how $\tilde{S}_r(X)$ arises as a symplectic reduction of the space of all generalized shape structures and thus carries a Poisson structure whose symplectic leaves are the fibers of $\tilde{\Omega}_X$ which is identical to the Poisson structure induced by the $H^1(\partial N_0(X), \bR)$-bundle structure.

\subsection{The $3-2$ Pachner moves}\label{3-2PM}
Let $X$ be a shaped pseudo 3-manifold. Let $e$ be a balanced edge of $X$ shared by exactly three distinct tetrahedra $t_1,t_2,t_3$. Let $S$ be a shaped pseudo 3-submanifold of $X$ composed of the tetrahedra $t_1,t_2,t_3$. Note that $S$ has $e$ as its only internal and balanced edge. There exists another triangulation $S_e$ of the topological space underlying $S$ such that the triangulation of $\partial S$ coincides with that of $\partial S_e$, but which consists of only two tetrahedra $t_4,t_5$. It is obtained by removing the edge $e$ so that $\Delta_1(S_e)=\Delta_1(S)\setminus\{e\}$. Moreover, there exists a unique shaped structure on $S_e$ which induces the same weights as the shape structure of $S$. For some choices of shape variables $(\alpha_i,\beta_i,\gamma_i)$ for $t_i$ (where $\alpha_i$ are the angles at $e$), the explicit map is given by

\begin{equation}\label{P32a}\begin{array}{ll}
\alpha_4 = \beta_2 + \gamma_1 & \alpha_5 = \beta_1 + \gamma_2\\
\beta_4 = \beta_1 + \gamma_3 & \beta_5 = \beta_3 + \gamma_1\\
\gamma_4 = \beta_3 + \gamma_2 & \gamma_5 = \beta_2 + \gamma_3.
\end{array}
\end{equation}
We observe that the equation $\alpha_1+\alpha_2 + \alpha_3 = 2\pi$ guaranties that the sum of the angles for $t_4$ and $t_5$ sums to $\pi$. Moreover it is clear from these equations that the positivity of the angles for $t_1, t_2, t_3$ guaranties that the angles for $t_4$ and $t_5$ are positive. Conversely we see that it is not automatic that we can solve for (positive) angles for $t_1,t_2,t_3$ given the angles for $t_4$ and $t_5$. However if we have two positive solutions for the angles for $t_1,t_2,t_3$, then they are gauge equivalent and satisfies that $\alpha_1+\alpha_2 + \alpha_3 = 2\pi$.

\begin{definition}
We say that a shaped pseudo 3-manifold $Y$ is obtained from $X$ by a \emph{shaped $3-2$ Pachner move} along $e$ if $Y$ is obtained from $X$ by replacing $S$ by $S_e$, and we write $Y=X_e$.
\end{definition}

We observe from the above that there is a canonical map from the set of shape structures on $X$ to the set of shape structures on $Y$:
$$ P^e : S(X) \ra  S(Y).$$
This map naturally extends to a map from all generalized shape structures on $X$ to the set of generalized shape structures on $Y$:
$$ \tilde{P}^e : \tilde S(X) \ra  \tilde S(Y).$$
We get the following commutative diagram
$$\begin{CD}
\tilde \Omega_X(e)^{-1}(2\pi) @>{\tilde P}>> \tilde S(Y)\\
@VVV @VVV\\
\tilde \Omega_{X, r} (e)^{-1}(2\pi)  @>\tilde P_r>> \tilde S_r(Y)\\
@VV{\rm{proj}\ \circ \ \tilde \Omega_{X,r}}V @VV{\tilde \Omega_{Y,r}}V\\
\bR^{\Delta_1(X)-e} @> = >>  \bR^{\Delta_1(Y)}
\end{CD}
$$
Moreover
$$\tilde P_r( \tilde \Omega_{X, r} (e)^{-1}(2\pi)  \cap S_r(x)) \subset S_r(Y).$$

\begin{theorem}\label{3-2SS}
Suppose that  a shaped pseudo 3-manifold $Y$ is obtained from a shaped pseudo 3-manifold $X$ by a leveled shaped $3-2$ Pachner move.
Then the map $\tilde P_r$ is a Poisson isomorphism, which is covered by an affine $\bR$-bundle isomorphism from $\widetilde{LS}_r(X)|_{\tilde \Omega_{X, r} (e)^{-1}(2\pi) }$ to $\widetilde{LS}_r(Y)$.
\end{theorem}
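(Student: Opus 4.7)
The plan is to reduce the theorem to the assertion that $\tilde P_r$ is an affine $H^1(\partial N_0,\bR)$-bundle isomorphism covering the identification of bases $\bR^{\Delta_1(X)\setminus\{e\}}=\bR^{\Delta_1(Y)}$ exhibited in the bottom row of the commutative diagram preceding the statement. A $3-2$ Pachner move preserves the homeomorphism type of the underlying pseudo $3$-manifold and of $\partial N_0$, so the two $H^1$ groups are canonically identified. Once $\tilde P_r$ is known to be an affine bundle isomorphism, the Poisson statement follows at once from Theorem~\ref{Shape}, which identifies the Poisson structure with the one induced by the $H^1$-bundle structure.

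First I define $\tilde P$ on generalized shape structures via the formulas (\ref{P32a}) and check that the vertex angle sum conditions on $t_4, t_5$ reduce to $\alpha_1+\alpha_2+\alpha_3=2\pi$, which is exactly the balancedness of $e$, and that the weight at every edge other than $e$ is preserved. Next I verify descent to gauge classes. For a gauge function $g\colon\Delta_1(X)\to\bR$, I decompose $g$ into its part supported on $\{e\}$ and its part supported on $\Delta_1(Y)$, and show that the first acts trivially on $\tilde P(\alpha_X)$ while the second acts by the gauge $g|_Y$ on $\tilde P(\alpha_X)$. For the $\{e\}$-part: using the definition of $\varepsilon$ and the cyclic order of opposite-edge pairs in each $t_i$, a gauge of weight $c$ supported on $\{e\}$ leaves each $\alpha_i$ fixed and shifts $\beta_i,\gamma_i$ by $\mp\pi c$; since each right-hand side of (\ref{P32a}) is a sum of one $\beta_j$ and one $\gamma_k$, it is manifestly invariant. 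The $\Delta_1(Y)$-part is a direct combinatorial check matching opposite-edge pairs of $t_1, t_2, t_3$ with those of $t_4, t_5$ through (\ref{P32a}).

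For bijectivity I invert (\ref{P32a}) as a linear system, writing $\beta_i,\gamma_i$ in terms of the six angles of $t_4, t_5$. The kernel is one-dimensional and matches exactly the gauge at $\{e\}$; setting $\alpha_i=\pi-\beta_i-\gamma_i$, the identity $\alpha_1+\alpha_2+\alpha_3=2\pi$ is forced by the vertex angle sum identities on $t_4, t_5$. Combined with the previous paragraph, this yields a well-defined inverse on gauge classes, so $\tilde P_r$ is an affine bundle isomorphism and the Poisson statement follows. The restriction $\tilde P_r(\tilde\Omega_{X,r}(e)^{-1}(2\pi)\cap S_r(X))\subset S_r(Y)$ is clear from positivity of the right-hand sides of (\ref{P32a}).

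Finally, I lift $\tilde P_r$ to the leveled bundles by setting $\ell_Y=\ell_X+L(\alpha_X)$, where $L$ is uniquely determined by imposing descent to based gauge classes. A gauge $g$ on $X$ supported on $\{e\}$ with $g(e)=c$ shifts the level by $c\sum_{a\in(\phi^{3,1})^{-1}(e)}(\tfrac13-\alpha_X(a)/\pi)=-c$, using $\alpha_1+\alpha_2+\alpha_3=2\pi$; since such gauges act trivially on $\tilde P(\alpha_X)$, $L$ must itself shift by $+c$ under the same gauge. Together with compatibility with gauges supported on $\Delta_1(Y)$, this pins $L$ down as an explicit linear combination of the quantities $\tfrac13-\alpha_X(a)/\pi$ over the edge-instances inside $t_1, t_2, t_3$. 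The remaining compatibility identity is affine-linear in $g$ and reduces to a finite edge-by-edge check; this bookkeeping, rather than any conceptual obstacle, is the main technical part of the argument. Once settled, the lift is by construction an affine $\bR$-bundle isomorphism covering $\tilde P_r$, and restricts over $S_r(X)$ to the stated isomorphism of $LS_r$-bundles.
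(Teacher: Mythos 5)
Your route is genuinely different from the paper's: the paper proves the Poisson statement directly by performing the symplectic reduction of $\tilde S(X)$ in two stages (first with respect to the single edge $e$, where an explicit check on the Neumann--Zagier brackets shows that the local map $\tilde\Omega_S(e)^{-1}(2\pi)/\bR\to\tilde S(S_e)$ is a symplectomorphism, and then with respect to the remaining edges, which match up trivially), whereas you try to deduce the Poisson statement from Theorem~\ref{Shape} by exhibiting $\tilde P_r$ as an isomorphism of affine $H^1(\partial N_0,\bR)$-bundles. Your subsidiary computations (invariance of the right-hand sides of \eqref{P32a} under the gauge at $e$, the one-dimensional kernel of the linear system, the level shift $-c$ coming from $\alpha_1+\alpha_2+\alpha_3=2\pi$) are correct and correspond to the ``simple explicit check'' the paper alludes to.

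However, the step ``once $\tilde P_r$ is known to be an affine bundle isomorphism, the Poisson statement follows at once'' has a gap. What you actually establish is that $\tilde P_r$ restricts to an affine \emph{bijection} of each fiber of $\tilde\Omega_{X,r}$ onto the corresponding fiber of $\tilde\Omega_{Y,r}$. An affine bijection between two symplectic affine spaces need not be symplectic: its linear part is \emph{some} isomorphism $H^1(\partial N_0(X),\bR)\to H^1(\partial N_0(Y),\bR)$, and the leafwise symplectic form of Theorem~\ref{Shape} is the translation-invariant form given by the intersection pairing on $H_1(\partial N_0,\bR)$, which is preserved only if that linear part is (or at least intertwines the pairing with) the canonical identification induced by the homeomorphism of underlying spaces. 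Observing that the two $H^1$ groups are canonically identified does not show that the linear part of $\tilde P_r$ \emph{equals} that identification. Concretely, you would need to check that for every normal curve $\gamma$ on $\partial N_0$, the holonomy $H_a(a')(\gamma)$ computed from the triangulation of the vertex links in $X$ equals $H_{\tilde P(a)}(\tilde P(a'))(\gamma)$ computed after the retriangulation of the links effected by the Pachner move --- a verification of how \eqref{P32a} interacts with the normal-curve combinatorics that your write-up never performs. Either supply that check, or replace it by the paper's direct computation that the linear part of $\tilde P$ carries the reduced Neumann--Zagier form on $\tilde\Omega_S(e)^{-1}(2\pi)/\bR$ to that of $\tilde S(S_e)$; without one of these, the central assertion of the theorem (that $\tilde P_r$ is Poisson) is not proved.
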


See Section~\ref{reduction} for the proof.

We also say that a leveled shaped pseudo 3-manifold $(Y,\ell_Y)$ is obtained from a leveled shaped pseudo 3-manifold $(X,\ell_X)$ by a \emph{leveled shaped $3-2$ Pachner move} if there exists $e\in\Delta_1(X)$ such that $Y=X_e$ and
\[
\ell_Y=\ell_X +\frac1{12\pi}\sum_{a\in(\phi^{3,1})^{-1}(e)}
\sum_{b\in\Delta_{3}^{1}(X)}\varepsilon_{p(a),p(b)}\alpha_X(b).
\]

\begin{definition}
A (leveled) shaped pseudo 3-manifold $X$ is called a \emph{Pachner refinement} of a (leveled) shaped pseudo 3-manifold $Y$ if there exists a finite sequence of (leveled) shaped pseudo 3-manifolds
\[
X=X_1,X_2,\ldots,X_n=Y
\]
such that for any $i\in\{1,\ldots,n-1\}$, $X_{i+1}$ is obtained from $X_i$ by a (leveled) shaped $3 - 2$ Pachner move. Two (leveled) shaped pseudo 3-manifolds $X$ and $Y$ are called \emph{equivalent} if there exist gauge equivalent (leveled) shaped pseudo 3-manifolds $X'$ and $Y'$ which are respective  Pachner refinements of $X$ and $Y$.
\end{definition}

\begin{theorem}\label{equivalencetheo}
Suppose two (leveled) shaped pseudo $3$-manifolds $X$ and $Y$ are equivalent. Then there exist $D\subset \Delta_1(X)$ and $D' \subset \Delta_1(Y)$ and a bijection
$$i : \Delta_1(X) - D \ra  \Delta_1(Y) - D'$$
and a Poisson isomorphism
$$R :  \tilde \Omega_{X, r} (D)^{-1}(2\pi) \ra \tilde \Omega_{Y, r} (D')^{-1}(2\pi),$$
which is covered by  an affine $\bR$-bundle isomorphism from $\widetilde{LS}_r(X)|_{\tilde \Omega_{X, r} (D)^{-1}(2\pi) }$ to $\widetilde{LS}_r(Y)|_{\tilde \Omega_{Y, r} (D')^{-1}(2\pi)}$ and such that we get the following commutative diagram
 $$\begin{CD}
\tilde \Omega_{X, r} (D)^{-1}(2\pi)  @>R>>  \tilde \Omega_{Y, r} (D')^{-1}(2\pi)\\
@VV{\rm{proj}\ \circ \ \tilde \Omega_{X,r}}V @VV{\rm{proj}\ \circ \ \tilde \Omega_{Y,r}}V\\
\bR^{\Delta_1(X)-D} @> i^* >>  \bR^{\Delta_1(Y)-D'}.
\end{CD}
$$
Moreover the isomorphism $R$ takes an open convex subset $U$ of $S_r(X)\cap \tilde \Omega_{X, r} (D)^{-1}(2\pi) $ onto an open convex subset $U'$ of  $S_r(Y)\cap \tilde \Omega_{Y, r} (D)^{-1}(2\pi) $. If $S_r(X)$ (equivalently $S_r(Y)$) is non empty, then so are $U$ and $U'$.
\end{theorem}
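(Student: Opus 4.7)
The plan is to chain together the Poisson and $\bR$-bundle isomorphisms provided by iterated applications of Theorem~\ref{3-2SS} along the two Pachner refinement chains, together with the isomorphism induced by the gauge equivalence via Theorem~\ref{Shape}. Unpacking the definition of equivalence, fix gauge-equivalent (leveled) shaped pseudo 3-manifolds $X'$ and $Y'$ that are Pachner refinements of $X$ and $Y$ respectively, together with a cellular isomorphism $h\colon X'\to Y'$ realizing the gauge equivalence. Let $D_1\subset\Delta_1(X')$ and $D_2\subset\Delta_1(Y')$ be the edges successively removed in the 3-2 chains, so that $\Delta_1(X)=\Delta_1(X')\setminus D_1$ and $\Delta_1(Y)=\Delta_1(Y')\setminus D_2$.

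Iterating Theorem~\ref{3-2SS} along each chain produces Poisson isomorphisms
\[
P_X\colon\tilde\Omega_{X',r}^{-1}(\{2\pi\}^{D_1})\xrightarrow{\sim}\tilde S_r(X),\qquad P_Y\colon\tilde\Omega_{Y',r}^{-1}(\{2\pi\}^{D_2})\xrightarrow{\sim}\tilde S_r(Y),
\]
each covered by an affine $\bR$-bundle isomorphism of the corresponding leveled shape spaces (the level shift of the composition being the sum of the per-move contributions from Section~\ref{3-2PM}). Define
\[
D=\Delta_1(X)\cap h^{-1}(D_2),\qquad D'=\Delta_1(Y)\cap h(D_1),
\]
and let $i$ be the restriction of $h\colon\Delta_1(X')\to\Delta_1(Y')$ to $\Delta_1(X)-D$; a direct set-theoretic check using $\Delta_1(X')=\Delta_1(X)\sqcup D_1$ and $\Delta_1(Y')=\Delta_1(Y)\sqcup D_2$ shows that the image equals $\Delta_1(Y)-D'$, yielding the required bijection. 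The key observation is that the balance condition $\omega=2\pi$ on $D\subset\Delta_1(X)$ pulls back under $P_X$ to $\omega=2\pi$ on $D_1\cup h^{-1}(D_2)\subset\Delta_1(X')$, which has image $h(D_1)\cup D_2\subset\Delta_1(Y')$ under $h$, which in turn pushes forward under $P_Y$ to $\omega=2\pi$ on $D'\subset\Delta_1(Y)$. Therefore the composition
\[
R=P_Y\circ(h^*)^{-1}\circ P_X^{-1}
\]
is a well-defined Poisson isomorphism $\tilde\Omega_{X,r}^{-1}(\{2\pi\}^D)\to\tilde\Omega_{Y,r}^{-1}(\{2\pi\}^{D'})$, covered by the claimed affine $\bR$-bundle isomorphism on leveled shape spaces. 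Commutativity of the weights diagram with $i^*$ is automatic because every ingredient of $R$ preserves the weight on each edge it leaves in place, and $i$ is precisely the induced identification of these untouched edges.

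For the positivity assertion, the explicit formulas~\eqref{P32a} imply that each 3-2 move sends positive shape structures (on the balanced fiber of its removed edge) to positive shape structures on the coarsened manifold, and Theorem~\ref{Shape} asserts that $h^*$ restricts to an isomorphism $S_r(Y')\to S_r(X')$. Setting
\[
V=S_r(X')\cap\tilde\Omega_{X',r}^{-1}(\{2\pi\}^{D_1\cup h^{-1}(D_2)}),
\]
the set $U:=P_X(V)$ is open in $S_r(X)\cap\tilde\Omega_{X,r}^{-1}(\{2\pi\}^D)$ (as the diffeomorphic image of an open set) and convex (as the image of the intersection of the open convex set $S_r(X')$ with an affine subspace), and $U':=R(U)$ is its open convex analogue on the $Y$ side, with $R(U)=U'$ holding tautologically. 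I expect the main obstacle to be the final non-emptiness claim: granted $S_r(X)\ne\varnothing$, one must exhibit a point of $V$. The difficulty is that positive shape structures on $X$ need not lift to positive shape structures on $X'$, since the 2-3 moves inverse to the chain need not preserve positivity. The natural approach is to exploit the affine $H^1(\partial N_0(X'),\bR)$-bundle structure on the fibers of $\tilde\Omega_{X',r}$ from Theorem~\ref{Shape}, together with openness of $S_r(X')$ in $\tilde S_r(X')$: starting from any shape structure on $X'$ lifting a given positive shape of $X$, one translates within the appropriate fiber by a suitable cohomology class so as to enter the positivity locus while preserving the imposed balance conditions on $D_1\cup h^{-1}(D_2)$. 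Verifying that such a translation exists is the technical heart of the argument.
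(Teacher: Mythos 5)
Your construction of $D$, $D'$, $i$, and $R$ by chaining the isomorphisms of Theorem~\ref{3-2SS} along the two Pachner chains and inserting $h^*$ from Theorem~\ref{Shape} is exactly what the paper intends --- its entire proof is the one-line remark that the statement is an immediate consequence of those two theorems --- and your bookkeeping of the balance conditions (that $D_1\cup h^{-1}(D_2)$ on the $X'$ side corresponds to $h(D_1)\cup D_2$ on the $Y'$ side, and that these decompose correctly relative to $D$ and $D'$) correctly fills in the details.

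The one step you leave open, non-emptiness of $U$, is not the technical heart of anything: it follows directly from the definition of equivalence, and your proposed workaround attacks a harder problem than the one posed. By definition, the Pachner refinements $X'$ and $Y'$ are \emph{shaped} pseudo $3$-manifolds, i.e.\ they come equipped with genuine positive shape structures, and each step of a shaped $3$--$2$ Pachner chain requires the edge being removed to be balanced at that shape structure. Since shaped $3$--$2$ moves preserve the weights of all surviving edges, the given shape structure on $X'$ is balanced on every edge of $D_1$; and since gauge equivalence preserves weights ($\omega_{X'}=\omega_{Y'}\circ h$) while the shape structure on $Y'$ is balanced on $D_2$, it is also balanced on $h^{-1}(D_2)$. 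Hence its class is already a point of your set $V=S_r(X')\cap\tilde\Omega_{X',r}^{-1}(\{2\pi\}^{D_1\cup h^{-1}(D_2)})$, so $U=P_X(V)\neq\varnothing$ and $U'=R(U)\neq\varnothing$. There is no need to lift an arbitrary positive shape structure from $X$ up to $X'$ (which, as you correctly observe, may be impossible), nor to translate by a class in $H^1(\partial N_0(X'),\bR)$ to reach the positivity locus --- a maneuver that is in any case not guaranteed to succeed, since a given fiber of $\tilde\Omega_{X',r}$ need not meet the open set $S_r(X')$ at all.
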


This Theorem is an immediate consequence of Theorem \ref{Shape} and Theorem \ref{3-2SS}.

 \subsection{The categroid of admissible leveled shaped pseudo $3$-manifolds}

Equivalence classes of leveled shaped pseudo 3-manifolds form morphisms of a cobordism category $\CC$, where the objects are triangulated surfaces, and composition is gluing along the relevant parts of the boundary by edge orientation preserving and face orientation reversing CW-homeomorphisms with the obvious composition of dihedral angles and addition of levels. Depending on the way of splitting the boundary, one and the same leveled shaped pseudo 3-manifold can be interpreted as different morphisms in $\CC$. Nonetheless, there is one canonical choice defined as follows.

For a tetrahedron $T=[v_0,v_1,v_2,v_3]$ in $\REALS^3$ with ordered vertices $v_0,v_1,v_2,v_3$, we define its sign
\[
\sign(T)=\sign(\det(v_1-v_0,v_2-v_0,v_3-v_0)),
\]
as well as the signs of its faces
\[
\sign(\partial_iT)=(-1)^{i}\sign(T),\quad i\in\{0,\ldots,3\}.
\]
For a pseudo 3-manifold $X$, the signs of faces of  the tetrahedra of $X$ induce a sign function on the faces of the boundary of $X$,
\[
\sign_X\colon \Delta_2(\partial X)\to\{\pm1\},
\]
which permits to split the boundary of $X$ into two components,
\[
\partial X=\partial_+X\cup\partial_-X,\quad
\Delta_2(\partial_\pm X)=\sign_X^{-1}(\pm1),
\]
composed of equal number of triangles. For example, in the case of a tetrahedron $T$ with $\sign(T)=1$, we have $\Delta_2(\partial_+T)=\{\partial_0T,\partial_2T\}$, and $\Delta_2(\partial_-T)=\{\partial_1T,\partial_3T\}$.
In what follows, unless specified otherwise, (the equivalence class of) a leveled shaped pseudo 3-manifold $X$  will always be thought of as a $\CC$-morphism between the objects $\partial_-X$ and $\partial_+X$, i.e.
\[
X\in\Hom_\CC(\partial_-X,\partial_+X).
\]
Our TQFT is not defined on the full category $\CC$, but only on a certain sub-categroid we will now define.

\begin{definition}
An oriented triangulated pseudo $3$-manifold is called \emph{admissible} if
$$
H_2(X-\Delta_0(X), \bZ) = 0.
$$
\end{definition}

\begin{definition}
The categroid $\CC_a$ of admissible leveled shaped pseudo $3$-manifolds is the sub-categroid of the category of leveled shaped pseudo $3$-manifolds whose morphisms consist of equivalence classes of admissible leveled shaped pseudo $3$-manifolds.
\end{definition}

Gluing in this sub-categroid is the one induced from the category $\CC$ and it is only defined for those pairs of admissible morphisms for which the glued morphism in $\CC$ is also admissible.

\subsection{The TQFT functor}
The main result of this paper is the construction of a one parameter family of TQFT's on admissible leveled shaped pseudo 3-manifolds, i.e. a family of functor's $\{F_\hbar\}_{\hbar\in\REALS_{+}}$ from the cobordism categroid $\CC_a$ of admissible leveled shaped pseudo 3-manifolds to the categroid of temperate distributions $\D$, which we will now describe.

Recall that the space of (complex) temperate distributions $\S'(\bR^n)$ is the space of continuous linear functionals on the (complex) Schwartz space $\S(\bR^n)$. By the Schwartz presentation theorem (see e.g. Theorem V.10 p. 139 \cite{RS1}), any temperate distribution can be represented by a finite derivative of a continuous function with polynomial growth, hence we may think of temperate distributions as functions defined on $\bR^n$. We will use the notation $\varphi(x)\equiv\langle x\vert\varphi\rangle$ for any $\varphi\in \S'(\bR^n)$ and $x\in \bR^n$. This notation should be considered in the usual distributional sense, e.g.
$$
\varphi(f) = \int_{\bR^n} \varphi(x) f(x) dx.
$$ This formula further exhibits the inclusion $\S(\bR^n)\subset \S'(\bR^n)$.

\begin{definition}
The categroid $\D$ has as objects finite sets and for two finite sets $n,m$ the set of morphisms from $n$ to $m$ is
$$
\Hom_{\D}(n,m) = \S'(\bR^{n\sqcup m}).
$$
\end{definition}
Denoting by  $ \L( \S(\bR^{n}), \S'(\bR^{m}))$ the space of continuous linear maps from $\S(\bR^{n})$ to $ \S'(\bR^{m})$, we remark that we have an isomorphism
\begin{equation}\label{iso}
\tilde{\cdot} :   \L( \S(\bR^{n}), \S'(\bR^{m})) \ra \S'(\bR^{n\sqcup m})
\end{equation}
determined by the formula
$$
\varphi(f)(g) = \tilde{\varphi}(f\otimes g)
$$
for all $\varphi \in  \L( \S(\bR^{n}), \S'(\bR^{m}))$, $f\in  \S(\bR^{n})$, and $g\in \S(\bR^{m})$. This is the content of the Nuclear theorem, see e.g. \cite{RS1}, Theorem V.12, p. 141. The reason why we get a categroid rather than a category, comes from the fact that we cannot compose all composable (in the usual categorical sense) morphisms, but only a subset thereof.

The partially defined composition in this categroid is  defined as follows. Let $n,m,l$ be three finite sets and $A\in \Hom_{\D}(n,m)$ and $B\in \Hom_{\D}(m,l)$. According to the temperate distribution analog of Theorem 6.1.2. in \cite{Hor1}, we have pull back maps
$$
\pi_{n,m}^* : \S'(\bR^{n\sqcup m}) \ra \S'(\bR^{n\sqcup m \sqcup l}) \mbox{ and } \pi_{m,l}^* : \S'(\bR^{m\sqcup l}) \ra \S'(\bR^{n\sqcup m \sqcup l}).
$$
By theorem IX.45 in \cite{RS2} (see also Appendix~B), the product
$$
\pi_{n,m}^*(A)\pi_{m,l}^*(B) \in \S'(\bR^{n\sqcup m \sqcup l})
$$
is well defined provided the wave front sets of $\pi_{n,m}^*(A)$ and $\pi_{m,l}^*(B)$ satisfy the following transversality condition
\begin{equation}\label{wftrans}
(\wf(\pi_{n,m}^*(A)) \oplus \wf(\pi_{m,l}^*(B)) )\cap Z_{n\sqcup m \sqcup l} = \varnothing
\end{equation}
where $Z_{n\sqcup m \sqcup l}$ is the zero section of $T^*(\bR^{n\sqcup m \sqcup l})$. If we now further assume that $\pi_{n,m}^*(A)\pi_{m,l}^*(B)$ continuously extends to $\S(\bR^{n\sqcup m \sqcup l})_m$ as is defined in Appendix~B,
then we obtain a well defined element
$$
(\pi_{n,l})_*(\pi_{n,m}^*(A)\pi_{m,l}^*(B) ) \in \S'(\bR^{n\sqcup l}).
$$
\begin{definition}
For $A\in \Hom_{\D}(n,m)$ and $B\in \Hom_{\D}(m,l)$ satisfying condition~(\ref{wftrans}) and such that $\pi_{n,m}^*(A)\pi_{m,l}^*(B)$ continuously extends to $S(\bR^{n\sqcup m \sqcup l})_m$, we define
$$
AB = (\pi_{n,l})_*(\pi_{n,m}^*(A)\pi_{m,l}^*(B) ) \in \Hom_{\D}(n,l).
$$
\end{definition}
For any $A\in  \L( \S(\bR^{n}), \S'(\bR^{m}))$, we have unique adjoint $A^*\in \L( \S(\bR^{m}), \S'(\bR^{n}))$ defined by the formula
$$
A^*(f)(g) = \overline{\bar{f}(A(\bar{g}))}
$$ for all $f\in  \S(\bR^{m})$ and all $g\in \S(\bR^{n})$.

\begin{definition}
A functor $F\colon \CC_a \to \D$ is said to be a  \emph{$*$-functor} if
\[
F(X^*)=F(X)^*,
\]
where $X^*$ is $X$ with opposite orientation, and $F(X)^*$ is the dual map of $F(X)$.
\end{definition}

The central essential ingredient in the construction of our functor is Faddeev's quantum dilogarithm \cite{F}.

\begin{definition} \emph{Faddeev's quantum dilogarithm} is a function of two complex arguments $z$ and $\la$ defined by the formula
\[
\QDILOG(z):=\exp\left(
\int_{C}
\frac{e^{-2\IMUN zw}\, dw}{4\sinh(w\la)
\sinh(w/\la) w}\right),
\]
where the contour $C$ runs along the real axis, deviating into the upper half plane in the vicinity of the origin.
\end{definition}

It is easily seen that  $\QDILOG(z)$ depends on $\la$ only through the combination $\hbar$ defined by the formula
\[
\hbar:=\left(\la+\la^{-1}\right)^{-2}.
\]

We can now state our main theorem.
\begin{theorem}\label{Main}
For any $\hbar\in\REALS_{+}$, there exists a unique $*$-functor
$F_\hbar\colon \CC_a \to \D$ such that
\(
F_\hbar(A)=\Delta_2(A),\  \forall A\in\Ob\CC_a,
\)
and for any admissible leveled shaped  pseudo 3-manifold $(X,\ell_X)$, the associated morphism in $\D$ takes the form
\begin{equation}\label{fh}
F_\hbar(X,\ell_X)=\parfun(X)e^{\imun\pi\frac{\ell_X}{4\hbar}}\in\S'\left(\bR^{\Delta_2(\partial X)}\right),
\end{equation}
where
\(
\parfun(X)
\)
is such that for one tetrahedron $T$ with $\sign(T)=1$, it is given by the formula
\begin{equation}\label{tet-int-f}
\parfun(T)(x)=\delta(x_0+x_2-x_1)\frac{\exp\left(2\pi\imun (x_3-x_2)\left(x_0+\frac{\alpha_3}{2\imun\sqrt{\hbar}}\right)+\pi \imun\frac{\varphi_T}{4\hbar}\right)}{\QDILOG\left(x_3-x_2+\frac{1-\alpha_1}{2\imun\sqrt{\hbar}}\right)}
\end{equation}
where $\delta(t)$ is Dirac's delta-function,
\[
\varphi_T:=\alpha_0\alpha_2+\frac{\alpha_0-\alpha_2}3-\frac{2\hbar+1}{6},\quad
\alpha_i:=\frac1\pi\alpha_T(\partial_i\partial_0T),\quad i\in\{0,1,2\},
\]
and
\[
x_i:=x(\partial_i(T)),\quad x\colon \Delta_2(\partial T)\to\REALS.
\]
\end{theorem}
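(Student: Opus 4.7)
The plan is to prove existence and uniqueness separately, with uniqueness being the easy consequence of the fact that every morphism in $\CC_a$ is a composition of tetrahedra.

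\textbf{Uniqueness.} Any admissible leveled shaped pseudo 3-manifold $X$ decomposes as a gluing of its constituent tetrahedra $\{T_i\}$, each carrying a sign $\sign(T_i)=\pm1$. Insisting that $F_\hbar$ be a functor forces
\[
\parfun(X) = (\pi_{\partial_-X,\partial_+X})_*\Bigl(\prod_i \pi_i^*(\parfun(T_i))\Bigr),
\]
after pulling all the tetrahedral kernels to the common product of boundary face variables and pushing forward onto $\partial X$. The $*$-property together with (\ref{tet-int-f}) determines $\parfun(T)$ for negatively oriented tetrahedra as the complex conjugate kernel, and the level dependence is fixed by (\ref{fh}). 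Hence at most one such functor exists.

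\textbf{Existence --- basic setup.} Taking the above formula as the \emph{definition} of $\parfun(X)$, the first task is to check that the partial composition is actually defined in $\D$. This splits into three points: (i) wave-front transversality (\ref{wftrans}) at every internal face must hold so that the pointwise product of pull-backs exists; (ii) the resulting distribution must continuously extend to $\S(\bR^{\Delta_2(\partial X)\sqcup\Delta_2^{int}(X)})_m$ so that the push-forward along the internal faces produces a temperate distribution; (iii) the answer must not depend on the order in which tetrahedra are multiplied. Point (iii) is essentially tautological since the product is symmetric, while (i) and (ii) are where admissibility $H_2(X\setminus\Delta_0(X),\Z)=0$ enters: one inspects the explicit form of $\parfun(T)$ and its conjugate, notes that the wave-front set of each tetrahedral kernel lies in the conormal bundle of the Dirac delta $\delta(x_0+x_2-x_1)$, and translates transversality at internal faces into the statement that a certain cellular 2-cochain is a coboundary, which by the universal coefficient theorem is equivalent to the vanishing of $H_2(X\setminus\Delta_0(X),\bR)$.

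\textbf{Existence --- invariance.} Having constructed $\parfun(X)$ as a temperate distribution on $\bR^{\Delta_2(\partial X)}$ for admissible $X$, one must prove that it is constant on equivalence classes. This breaks into two checks. First, gauge invariance: under a change $\alpha_X\to\alpha_X+\pi\sum\varepsilon_{p(a),p(b)}g(\phi^{3,1}(b))$ on a single tetrahedron the kernel $\parfun(T)$ picks up a precise phase; summing over all tetrahedra sharing a given edge $e$ collapses (by skew-symmetry of $\varepsilon$) to a quantity that exactly cancels against the prescribed shift of $\ell_X$ in Definition \ref{gaugeeq} (this is why the level correction has the form $(1/3-\alpha/\pi)$, and the $1/(4\hbar)$ prefactor is tuned to this cancellation). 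Second, invariance under leveled shaped $3{-}2$ Pachner moves: this is the heart of the construction. One writes out $\parfun$ for the three-tetrahedron and the two-tetrahedron configurations using (\ref{tet-int-f}), performs the Gaussian integrations imposed by the delta functions and by the internal face push-forward, and reduces the identity $\parfun(X) = \parfun(X_e)\,e^{i\pi(\ell_Y-\ell_X)/(4\hbar)}$ precisely to Faddeev's pentagon identity for $\QDILOG$, with the level shift $\frac{1}{12\pi}\sum\varepsilon_{p(a),p(b)}\alpha_X(b)$ accounting for the residual phase coming from the prefactor $\varphi_T$ in (\ref{tet-int-f}).

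\textbf{Main obstacle and $*$-property.} The hardest step is the 3-2 Pachner invariance, because one must match the analytic pentagon identity for $\QDILOG$ (with its specific arguments built from half-angles) against the combinatorics (\ref{P32a}) of how the shape variables transform, \emph{and} simultaneously verify that the wave-front and continuity requirements of Section B hold throughout the computation so that all intermediate objects are legitimate elements of $\D$. Once this is done, the $*$-functor property reduces to the symmetry $\overline{\QDILOG(\bar z)}=1/\QDILOG(-z)$ of Faddeev's quantum dilogarithm applied tetrahedron by tetrahedron, combined with the observation that reversing orientation flips every $\sign(T)$ and conjugates $\parfun(T)$. Combining all these ingredients yields the functor $F_\hbar$, completing the proof.
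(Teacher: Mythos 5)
Your overall architecture coincides with the paper's: define $\parfun$ on a single tetrahedron by \eqref{tet-int-f}, build $\parfun(X)$ by multiplying pull-backs of the tetrahedral kernels and pushing forward over internal faces, and then verify gauge invariance, invariance under $3-2$ Pachner moves via a pentagon identity for $\QDILOG$, convergence from admissibility, and the $*$-property; uniqueness by decomposition into tetrahedra is also how the paper implicitly argues. However, there is a genuine gap: you never address independence of the choice of vertex ordering of the tetrahedra. The kernel \eqref{tet-int-f} is written with respect to a fixed total order of the vertices of $T$ (it singles out the angles $\alpha_0,\alpha_1,\alpha_2$ attached to the edges through $\partial_0\partial_0T$ and the splitting of the faces into $\partial_\pm T$), and a general admissible gluing pattern --- as well as a general $3-2$ Pachner configuration --- cannot be brought to the single vertex-ordered configuration in which the charged pentagon identity \eqref{eq:charged-pentagon} is proved without first knowing how $\parfun(T)$ transforms under reordering of the vertices. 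The paper supplies exactly this in the Fundamental Lemma (identities \eqref{eq:p01}--\eqref{eq:p23}), which realize the three generators of the permutation group of the four vertices on the level of kernels via the auxiliary cone distributions $\sA$ and $\sB$; this is listed as one of the main constituents of the proof, and without it neither the well-definedness of $\parfun(X)$ as a morphism nor the reduction of an arbitrary Pachner move to \eqref{eq:charged-pentagon} goes through.

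A secondary weakness is your account of how admissibility enters the convergence argument. The paper does not argue that ``a certain cellular $2$-cochain is a coboundary'' and does not invoke universal coefficients (note also that the hypothesis is $H_2(X-\Delta_0(X),\bZ)=0$, not the vanishing of real homology). Instead it establishes the operator-valued cohomology class $\myhom(X)$ and the annihilation equations \eqref{eq:ann} for curves in the kernel of $H_1(\partial X\setminus\Delta_0(\partial X),\INTEGERS)\to H_1(X\setminus\Delta_0(X),\INTEGERS)$, passes to a linear polarization in which $\Psi(X)$ takes the product form \eqref{nf}, and then shows by a normal-surface argument that a failure of the wave-front condition at a gluing would produce a nontrivial $2$-cycle in $H_2(Y\setminus\Delta_0(Y),\bZ)$, contradicting admissibility; the exponential decay of the smooth factor $\psi_X$ is what guarantees the push-forward exists. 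Your sketch points in the right direction but does not contain the mechanism that actually makes the integrals converge.
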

The main constituents of the proof of this theorem are presented in Sections~\ref{CTO} to \ref{Conv}. The key idea behind it is to use the charged tetrahedral operator $\parfun(T)$ given by formula~\eqref{tet-int-f} and which is further discussed in Section~\ref{CTO}. This operator carries all the necessary symmetries and satisfies the pentagon relation as demonstrated in Sections~\ref{CPI} and \ref{tqft-rules}. The gauge transformation properties of the partition function is established in Section~\ref{GTP}. A certain geometric constraint on the partition function is established in Section~\ref{GC}. The convergence properties under gluing of tetrahedra is proved in Section~\ref{Conv}. We end Section~\ref{Conv} by summarizing the proof of Theorem~\ref{Main}.

\begin{remark}
We emphasize that for an admissible pseudo $3$-manifold $X$, our TQFT functor provides us with the following well defined function
$$
F_\hbar : LS_r(X) \ra \S'(\bR^{\partial X}).
$$
For the case $\partial X = \varnothing$, we have $\S'(\bR^{\partial X})= \bC$ and so, in this case, we simply get a complex valued function on $LS_r(X)$.
In particular, the value of the functor $F_\hbar$ on any fully balanced admissible leveled shaped 3-manifold is a complex number, which is a topological invariant.
\end{remark}

\subsection{Invariants of knots in 3-manifolds}
 By considering ideal triangulations of complements of hyperbolic knots in compact oriented closed 3-manifolds, we obtain knot invariants which are direct analogues of Baseilhac--Benedetti invariants \cite{BB}.  For such an $X$, our invariant is a complex valued function on the affine $\bR$ bundle $LS_r(X)$ over $S_r(X)$, which forms an open convex (if non-empty) subset of the affine space $\tilde{S}_r(X)$, and which is modeled on the real cohomology of the boundary of a tubular neighborhood of the knot. One can study these invariants also in the case of non-hyperbolic knots  whose complements admit ideal triangulations with non-negative angle structures. In this case, one first calculates the partition function for a not fully balance shape structure, and then tries to take a limit to a fully balanced non-negative shape structure. if such limit exists, then this will be the value of the invariant. However, in the case of the unknot in 3-sphere, where the complement is a solid torus, the invariant cannot be calculated.  The reason is that if  a manifold $M$ admits  an ideal triangulation supporting a non-negative angle structure, then according to Casson and Lackenby,  the simple normal surface theory
and the Gauss--Bonnet theorem for angle structures imply that the boundary of $M$ is incompressible in $M$, see \cite{Lackenby}. Thus, $M$ cannot be a solid torus which has compressible boundary\footnote{We thank Feng Luo for explaining to us this point and pointing to ref. \cite{Lackenby}.}.

Another possibility is to consider one-vertex Hamiltonian triangulations (or H-triangulations) of pairs (a compact closed 3-manifold $M$, a knot $K$ in $M$), i.e. one vertex triangulations of $M$ where the knot $K$ is represented by one edge, with degenerate shape structures, where the weight on the knot approaches zero and simultaneously the weights on all other edges approach the balanced value $2\pi$ (assuming that such configurations can be approached via shape structures on a given triangulation). This limit by itself is divergent as a simple pole (after analytic continuation to complex angles) in the weight of the knot, but the residue at this pole is a knot invariant which is the direct analogue of Kashaev's invariants \cite{K4} which are specialisations of the the colored Jones polynomials \cite{MM} and which were at the origin of the hyperbolic volume conjecture \cite{K6}. In the next subsection, we suggest a conjectural relationship between these two types of invariants.

\subsection{Future perspectives}
In this subsection we present a conjecture about our functor $F_\hbar$, which, among other things, provides a relation to the hyperbolic volume in the asymptotic limit $\hbar\to0$. So far, we have been able to check this conjecture for the first two hyperbolic knots.

\begin{conjecture}\label{conj}
Let $M$ be a closed oriented compact 3-manifold.
For any hyperbolic knot $K\subset  M$, there exists a smooth function
$J_{M,K}(\hbar,x)$ on $\REALS_{>0}\times\REALS$  which has the following properties.
\begin{enumerate}
\item
For any fully balanced shaped ideal triangulation $X$ of the complement  of $K$ in $M$, there exist a gauge invariant real  linear combination of dihedral angles $\lambda$, a (gauge non-invariant) real quadratic polynomial of dihedral angles $\phi$ such that
\[
\parfun(X)=e^{\imun\frac{\phi}{\hbar}}\int_{\REALS} J_{M,K}(\hbar,x)e^{-\frac{x\lambda}{\sqrt{\hbar}}} dx
\]
\item
For any one vertex shaped H-triangulation $Y$ of the pair $(M,K)$ there exists  a real quadratic polynomial of dihedral angles $\varphi$ such that
\[
\lim_{\omega_Y\to \tau}\QDILOG\left(\frac{\pi-\omega_Y(K)}{2\pi\imun\sqrt{\hbar}}\right)\parfun(Y)=
e^{\imun\frac{\varphi}{\hbar}-\imun\frac{ \pi}{12}}J_{M,K}(\hbar,0),
\]
where $\tau\colon \Delta_1(Y)\to \REALS$ takes the value $0$ on the knot $K$ and the value $2\pi$ on all other edges.
\item The hyperbolic volume of the complement of $K$ in $M$ is recovered as the following limit:
\[
\lim_{\hbar\to0}2\pi\hbar\log\vert J_{M,K}(\hbar,0)\vert=-\vol(M\setminus K).
\]
\end{enumerate}
\end{conjecture}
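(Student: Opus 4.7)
The plan is to take property~(1) as the \emph{definition} of $J_{M,K}(\hbar,x)$, then derive properties~(2) and~(3). More precisely, I fix a fully balanced ideal triangulation $X$ of $M\setminus K$; by Theorem~\ref{Shape} the space $\tilde S_r(X)$ carries an affine $H^1(\partial N_0(X),\bR)$-bundle structure, and inside this cohomology the meridian class of $K$ singles out a distinguished one-parameter gauge direction along which all weights remain balanced. Let $\lambda$ be the corresponding gauge-invariant real-linear combination of dihedral angles, and let $\phi$ be the quadratic polynomial collecting the tetrahedral phases $\varphi_T$ from~\eqref{tet-int-f}. I define $J_{M,K}(\hbar,x)$ as the inverse Fourier transform of $e^{-\imun\phi/\hbar}\parfun(X)$ along the $\lambda$-direction, with $x$ dual to $\lambda$ and the $\sqrt\hbar$ scaling inserted as dictated by the semiclassical normalization forced by~(1).

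The first task is independence of the choice of $X$. By Theorem~\ref{equivalencetheo}, any two fully balanced ideal triangulations of $M\setminus K$ are equivalent via leveled shaped $3$-$2$ Pachner moves, so it suffices to verify that the defining integral transforms consistently under such a move. This reduces to an explicit integral identity for the tetrahedral operator~\eqref{tet-int-f} --- essentially the pentagon relation already needed to prove Theorem~\ref{Main} --- supplemented with the bookkeeping of how $\lambda$ and $\phi$ propagate through the move and how they absorb the level shift.

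For property~(2), I build an explicit bridge between an ideal triangulation $X$ of $M\setminus K$ and a one-vertex H-triangulation $Y$ of $(M,K)$: topologically, drilling out $K$ and subdividing yields $X$ from $Y$, and conversely $Y$ is obtained from $X$ by collapsing the peripheral torus through a controlled sequence of $3$-$2$ moves ending with the insertion of one distinguished tetrahedron whose dihedral angle at $K$ plays the role of the degenerating weight. As $\omega_Y\to\tau$, only the inserted tetrahedron develops a singularity, and its singular factor is exactly $1/\QDILOG\bigl((\pi-\omega_Y(K))/(2\pi\imun\sqrt\hbar)\bigr)$; multiplying by this quantum dilogarithm cancels the pole and, in the limit, produces $J_{M,K}(\hbar,0)$ up to the explicit phase $e^{\imun\varphi/\hbar-\imun\pi/12}$ which records the remaining tetrahedral contribution and a universal normalization constant.

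The main obstacle is property~(3), the hyperbolic volume limit. The strategy is a formal stationary-phase analysis of the integral defining $J_{M,K}(\hbar,0)$: using the classical asymptotics $\log\QDILOG(z)\sim (2\pi\imun\hbar)^{-1}\dil(-e^{2\pi\la z})$ as $\hbar\to 0^{+}$, the exponent of the integrand becomes, to leading order, the Neumann--Zagier potential on the variety of complex shape parameters of $X$; its critical-point equations are precisely Thurston's gluing equations, and the critical value at the geometric branch equals $-\vol(M\setminus K)/(2\pi\hbar)$ by the Neumann--Zagier formula, matching~(3). Three genuinely difficult points remain: deforming the integration contour through the doubly-periodic pole lattice of $\QDILOG$ so that it picks up exactly the geometric saddle, ruling out dominant contributions from non-geometric branches of the gluing variety, and controlling the subleading corrections uniformly in $\hbar$. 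Each is tantamount to a form of the hyperbolic volume conjecture in the sense of Kashaev and Murakami--Murakami and is, in full generality, beyond current techniques; the authors' verification for the first two hyperbolic knots is consistent with the conjecture but does not suggest a uniform method of proof.
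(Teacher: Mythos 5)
There is a fundamental mismatch here: the statement you are asked about is a \emph{conjecture}, and the paper does not prove it in general --- nor does your proposal. The paper's only ``proof'' content attached to Conjecture~\ref{conj} is Theorem~\ref{th:4-1--5-2}, which verifies the three properties for the two pairs $(S^3,4_1)$ and $(S^3,5_2)$ by explicitly computing the state integrals for the standard two-tetrahedron (resp.\ three-tetrahedron) ideal triangulations and the corresponding one-vertex H-triangulations in Section~\ref{ex}, reading off $J_{M,K}=\chi_{4_1}$, $\chi_{5_2}$ from those computations, and then carrying out a genuine steepest-descent analysis of $g_n(\hbar)$ in Section~\ref{proof-th3} (including a contour-deformation lemma locating the geometric saddle and identifying $\im v_n(z_n)$ with minus the hyperbolic volume). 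Your proposal, by contrast, is a general strategy sketch, and you yourself concede at the end that the three essential difficulties in property~(3) --- selecting the geometric saddle, excluding non-geometric branches, and uniform error control --- are ``beyond current techniques.'' A proof that ends by admitting its main step cannot be carried out is not a proof.

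Beyond that global issue, two intermediate steps are also gaps rather than reductions. First, defining $J_{M,K}$ as an inverse Fourier transform of $e^{-\imun\phi/\hbar}\parfun(X)$ along the $\lambda$-direction presupposes that $\parfun(X)$, as a function on the space of fully balanced shape structures, depends on the angles \emph{only} through a single gauge-invariant linear combination $\lambda$ and a quadratic phase $\phi$; this factorization is itself the nontrivial content of property~(1) and is not supplied by Theorem~\ref{Shape} or the pentagon identity --- in the paper it is simply observed to hold in the two computed examples. Second, your ``bridge'' between an ideal triangulation $X$ of $M\setminus K$ and a one-vertex H-triangulation $Y$ of $(M,K)$ via ``collapsing the peripheral torus through a controlled sequence of $3$--$2$ moves'' is not an operation available in the paper's framework: the $3$--$2$ moves preserve the underlying pseudo-manifold, whereas $X$ and $Y$ are different topological spaces, and the claim that only one distinguished tetrahedron degenerates with singular factor exactly $1/\QDILOG\bigl((\pi-\omega_Y(K))/(2\pi\imun\sqrt{\hbar})\bigr)$ is again something the paper verifies only by direct computation in the examples. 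If you want to match what the paper actually establishes, you should restrict your claim to the two knots of Theorem~\ref{th:4-1--5-2} and replace the general strategy by the explicit evaluations of Sections~\ref{ex} and~\ref{proof-th3}.
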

\begin{remark}
In part (3) of the conjecture, we have a negative sign in the right hand side which differs from the usual volume conjecture \cite{K6} so that, in this case, the invariant exponentially decays rather than grows, the decay rate being given by the hyperbolic volume.
\end{remark}
\begin{theorem}\label{th:4-1--5-2}
Conjecture \ref{conj} is true for the pairs $(S^3,4_1)$ and $(S^3,5_2)$ with
\[
J_{S^3,4_1}(\hbar,x)=\chi_{4_1}(x),\quad J_{S^3,5_2}(\hbar,x)=\chi_{5_2}(x),
\]
where functions $\chi_{4_1}(x)$ and $\chi_{5_2}(x)$ are defined in \eqref{eq:chi}
\end{theorem}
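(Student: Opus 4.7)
The strategy is to verify the three parts of Conjecture~\ref{conj} by direct calculation on the smallest admissible (H-)triangulations of each complement, and then to carry out a saddle point analysis of the resulting one-dimensional integrals. For $4_1$ I would use Thurston's two-tetrahedron ideal triangulation of $S^3\setminus 4_1$, and for $5_2$ the standard three-tetrahedron ideal triangulation of $S^3\setminus 5_2$; corresponding one-vertex H-triangulations of $(S^3,4_1)$ and $(S^3,5_2)$ are obtained by adding one additional tetrahedron whose distinguished edge realizes the knot. In each case, the admissibility condition $H_2(X-\Delta_0(X),\bZ)=0$ can be verified directly from the gluing data, so that Theorem~\ref{Main} applies and $\parfun$ is well defined on these shape spaces.

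For parts (1) and (2), I would apply the tetrahedral formula~\eqref{tet-int-f} to each simplex and compose them using the partial gluing in the categroid $\D$. Each internal face identification contributes a Dirac delta factor which is used to eliminate one variable in $\bR^{\Delta_2(\partial X)}$. After these eliminations, what remains is a single real integral whose integrand is the product of the Gaussian phases $e^{\imun\pi\varphi_T/(4\hbar)}$ from each tetrahedron (collected into one explicit quadratic polynomial in the dihedral angles), of exponentials linear in the remaining variable, and of two (resp.\ three) ratios of Faddeev dilogarithms. Collecting the quadratic part as $e^{\imun\phi/\hbar}$ and the linear part as $e^{-x\lambda/\sqrt{\hbar}}$ identifies the remaining integrand with the functions $\chi_{4_1}$ and $\chi_{5_2}$ of~\eqref{eq:chi}. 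Part (2) is obtained analogously from the H-triangulations: the extra tetrahedron produces a $\QDILOG$ factor in the denominator which diverges along $\omega_Y(K)=0$; multiplying by $\QDILOG(\tfrac{\pi-\omega_Y(K)}{2\pi\imun\sqrt{\hbar}})$ cancels this singularity, and taking $\omega_Y\to\tau$ reduces the remaining integral to $J_{M,K}(\hbar,0)$, up to the explicit prefactor $e^{\imun\varphi/\hbar-\imun\pi/12}$ in which $\varphi$ collects the tetrahedral $\varphi_T$ evaluated at the degenerate angles $\tau$ and $-\pi/12$ is the universal subleading constant in the asymptotics of $\QDILOG$ at its pole.

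For part (3), I would analyze the semiclassical $\hbar\to 0$ behavior of $J_{S^3,K}(\hbar,0)$ by the saddle point method, using the known asymptotic expansion
\[
\log\QDILOG(z)\sim\frac{1}{2\pi\imun\hbar}\dil\!\left(-e^{2\pi\sqrt{\hbar}\,z}\right),\qquad \hbar\to0.
\]
This rewrites the integrand as $\exp(\tfrac{1}{2\pi\imun\hbar}W(x)+O(1))$, where $W$ is (up to an additive constant) the Neumann--Zagier potential of the chosen triangulation; its critical points solve Thurston's gluing equations, and at the geometric solution $\im W$ equals the hyperbolic volume $\vol(S^3\setminus K)$. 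Deforming the contour through this complex saddle then yields $\log|J_{S^3,K}(\hbar,0)|\sim -\vol(S^3\setminus K)/(2\pi\hbar)$, as required. The main technical obstacle lies precisely in this deformation: the geometric saddle is complex, the original contour is the real axis, and $\QDILOG$ has infinite arrays of zeros and poles which must be avoided while controlling growth along the deformed contour. For the figure-eight knot this deformation is by now well understood from prior work on Kashaev's invariant and the volume conjecture, and for $5_2$ a similar but more involved argument is available; this is precisely why the conjecture can be verified rigorously in these two cases while remaining open in general.
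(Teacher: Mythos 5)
Your proposal follows essentially the same route as the paper: parts (1) and (2) are obtained by explicitly composing the charged tetrahedral kernels on the two-- and three--tetrahedron ideal triangulations of the $4_1$ and $5_2$ complements (and on their one-vertex H-triangulations with one extra tetrahedron carrying the knot edge), and part (3) by steepest descent using the quasi-classical expansion of Faddeev's quantum dilogarithm, where the paper unifies both knots into the single family $g_n(\hbar)$ with potential $v_n(z)=-n\dil(-e^z)-\tfrac12 z^2$ ($n=2,3$) and proves a self-contained contour-deformation lemma in place of your appeal to prior work. The only slip is the intermediate claim that $\im W$ equals the volume at the geometric saddle: with the paper's conventions one has $\im v_n(z_n)=-\vol(S^3\setminus K)$, which is what makes your (correct) final decay statement come out.
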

This theorem is proved in Section~\ref{proof-th3}.

To conclude this introduction,  let us mention that it would be interesting to understand the operators constructed in this paper from the viewpoint of Toeplitz operator constructions of \cite{A} in the context of Reshetikhin-Turaev TQFT.  Besides, Teschner's modular functor \cite{tesch1} derived from quantum Teichm\"{u}ller theory could possibly be behind another formulation of our TQFT.

\subsection*{Acknowledgements} We would like to thank Ludwig Faddeev, Gregor Masbaum, Nikolai Reshetikhin, and Vladimir Turaev  for valuable discussions. Our special thanks go to Feng Luo for  explaining to us the topological significance of non-negative angle structures.

\section{The symplectic space of generalized shape structures}
\label{reduction}
Let $X$ be a pseudo $3$-manifold. First, we recall the Neumann--Zagier symplectic structure $\omega$ on the affine space $\tilde{S}(X)$ \cite{NZ}.
Each $a\in \Delta^{1/p}_3(X)$ induces a function on $\tilde{S}(X)$ which we  denote by $\alpha_a$.
\begin{definition}
The Neumann--Zagier symplectic structure $\omega$ is the unique symplectic structure on $\tilde{S}(X)$ whose induced Poisson bracket $\{\cdot,\cdot\}$ satisfies the equation
$$
\{\alpha_a,\alpha_b\} = \epsilon_{a,b}
$$
for all $a,b\in \Delta^{1/p}_3(X)$.
\end{definition}
We have the following symplectic product decomposition over tetrahedra
$$
\tilde{S}(X) = \prod_{T\in \Delta_3(X)} \tilde{S}(T)
$$
where $\tilde{S}(T)$ is an affine copy of $\bR^2$. The standard symplectic structure on  $\bR^2$ induces a symplectic structure on $\tilde{S}(T)$ which coincides with the Neumann--Zagier structure.

We now define an action of $\bR^{\Delta_1(X)}$ on $\tilde{S}(X)$ by the formulae in Definition \ref{gaugeeq}, where we take $h$ to be the identity map.

\begin{theorem}
The action of $\bR^{\Delta_1(X)}$ on $\tilde{S}(X)$ is symplectic and the map $\tilde{\Omega}_X$ is a moment map for this action.

\end{theorem}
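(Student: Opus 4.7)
There are three things to verify: (a) the action is well-defined on $\tilde{S}(X)$, i.e.\ respects the per-tetrahedron constraint that the three shape parameters sum to $\pi$; (b) the action preserves $\omega$; (c) the map $\tilde{\Omega}_X$ (up to the standard sign/scale convention) satisfies $\iota_{X_\xi}\omega = d\langle\tilde{\Omega}_X,\xi\rangle$. The key mechanism in all three is the cyclic skew-symmetry of $\varepsilon$ and the bracket $\{\alpha_a,\alpha_b\}=\varepsilon_{a,b}$.

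For (a), an infinitesimal shift by $g\in\bR^{\Delta_1(X)}$ changes the sum of the three shape parameters in a tetrahedron $T$ by $\pi\sum_b g(\phi^{3,1}(b))\sum_{c\in T/p}\varepsilon_{c,p(b)}$; the inner sum vanishes, trivially when $b\notin T$ and because $\varepsilon$ restricted to the three pairs of $T$ is a cyclic skew form when $b\in T$. For (b), the fundamental vector field
\[
X_\xi \;=\; \pi\sum_{c\in\Delta_{3}^{1/p}(X)}\!\Bigl(\,\sum_{b\in\Delta_{3}^{1}(X)}\!\varepsilon_{c,p(b)}\,\xi_{\phi^{3,1}(b)}\Bigr)\frac{\partial}{\partial\alpha_c}
\]
is a constant vector field on the affine space $\tilde{S}(X)$, and $\omega$ has constant coefficients in the global affine coordinates $\alpha_c$, so Cartan's formula immediately gives $\mathcal{L}_{X_\xi}\omega=0$. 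For (c), set $H_\xi:=\langle\tilde{\Omega}_X,\xi\rangle=\sum_e \xi_e\,\omega_X(e)=\sum_{a\in\Delta_3^1(X)}\xi_{\phi^{3,1}(a)}\alpha_{p(a)}$; using $\{\alpha_{p(a)},\alpha_c\}=\varepsilon_{p(a),c}$,
\[
\{H_\xi,\alpha_c\} \;=\; \sum_a \xi_{\phi^{3,1}(a)}\varepsilon_{p(a),c} \;=\; -\sum_a \xi_{\phi^{3,1}(a)}\varepsilon_{c,p(a)},
\]
which is precisely $-\pi^{-1}X_\xi(\alpha_c)$. Thus the Hamiltonian vector field of $H_\xi$ coincides with $X_\xi$ up to the scalar $-\pi$, which is the standard moment-map identity once the conventional normalization (either rescaling the action by $-\pi$ or taking $-\pi\tilde{\Omega}_X$ as the literal moment map) is absorbed. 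As a compatibility check, $\{H_\xi,C_T\}=0$ for the Casimirs $C_T=\sum_{c\in T/p}\alpha_c$ follows from the same cyclic vanishing $\sum_{c\in T/p}\varepsilon_{p(a),c}=0$, so $H_\xi$ is well-defined as a function on $\tilde{S}(X)$ (rather than on its linear hull).

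The actual work is bookkeeping with the index sets $\Delta_3^1(X)$ and $\Delta_3^{1/p}(X)$ and the projection $p$; the only place a genuine argument is needed is the cyclic sum identity, which recurs in the proof of (a), in checking that $H_\xi$ descends past the tetrahedral constraints, and implicitly in ensuring the Neumann--Zagier bracket is consistent with $\sum_{c\in T/p}\alpha_c=\pi$ being a Casimir. The main obstacle, such as it is, lies in fixing signs and the multiplicative constant $\pi$ consistently between the definition of the action in Definition~\ref{gaugeeq}, the sign convention in $\{\alpha_a,\alpha_b\}=\varepsilon_{a,b}$, and whichever convention for moment maps is in force.
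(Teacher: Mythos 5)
Your argument is correct and follows essentially the same route as the paper: translations preserve the constant-coefficient Neumann--Zagier form, and the moment-map identity reduces to comparing the fundamental vector field $X_e$ with the (differential of the) Hamiltonian $\omega_X(e)$, which the paper carries out by contracting $X_e$ with the symplectic form rather than via Poisson brackets as you do. Your additional checks --- that the translation action preserves the per-tetrahedron constraint, that $H_\xi$ Poisson-commutes with the Casimirs $\sum_{c}\alpha_c$, and your explicit tracking of the overall factor $\pi$, which the paper's displayed identity $\Lambda_e=-d\omega_X(e)$ silently absorbs into the normalization --- are refinements of the same computation, not a different proof.
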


\proof
The fact that $\bR^{\Delta_1(X)}$ acts symplectically on $\tilde{S}(X)$ follows from the fact that $\bR^{\Delta_1(X)}$ acts by translations. Let $v_e\in \mbox{Lie}(\bR^{\Delta_1(X)})$, $e\in \Delta_1(X)$, be the natural basis of $\mbox{Lie}(\bR^{\Delta_1(X)})$. Fixing $e\in \Delta_1(X)$, the basis vector $v_e$ induces the following vector field $X_e$ on $\tilde{S}(X)$
$$
X_e = \pi  \sum_{a\in \Delta^{1/p}_3(X)}   \sum_{b\in (\phi^{3,1})^{-1}(e)}\epsilon_{a,b} \frac{\partial}{\partial \alpha_a}.
$$
By contracting this vector field with the symplectic form, one gets a one form $\Lambda_e$ given by
$$
\Lambda_e = \pi \sum_{a,c\in \Delta^{1/p}_3(X)} \sum_{ b\in (\phi^{3,1})^{-1}(e)} \epsilon_{a, b}\epsilon_{a,c} d\alpha_c.
$$
Now, we compute the exterior derivative of $\tilde{\Omega}_X(e) = \omega_X(e)$
$$
d\omega_X(e) = \sum_{c\in (\phi^{3,1})^{-1}(e)} d\alpha_c.
$$
By computing the components of the one form $\Lambda_e$ in $\tilde{S}(T)$, for each $T\in \Delta_3(X)$, we see that
$$
\Lambda_e =-d\omega_X(e).$$
\eproof
Thus, we see that
$$
\tilde{S}_r(X) = \tilde{S}(X)/\bR^{\Delta_1(X)}
$$
caries a natural Poisson structure induced from the symplectic structure on $\tilde{S}(X)$. Moreover, the moment map $\tilde{\Omega}_X$ is invariant under the action of the group $\bR^{\Delta_1(X)}$ and hence it descends to a well defined map on $\tilde{S}_r(X)$, which we denote $\tilde{\Omega}_{X,r}$ in the introduction.  By the previous theorem, the symplectic leaves of the Poisson structure on
$\tilde{S}_r(X)$ are  the fibers of $\tilde{\Omega}_{X,r}$.

\vskip.3cm

\noindent\emph{Proof of Theorem \ref{Shape}.}
Consider any $m\in \bR^{\Delta_1(X)}$. It is clear that $\tilde{\Omega}_X^{-1}(m) \neq \emptyset$, so pick a point $a\in \tilde{\Omega}_{X,r}^{-1}(m) $. We will now construct a map
$$
H_a : \tilde{\Omega}_{X,r}^{-1}(m)  \ra H^1(\partial N_0(X), \bR)
$$ as follows.
For each $a'\in \tilde{\Omega}_{X,r}^{-1}(m) $, we consider $a-a'$ which we represent by a map from $\Delta^{1/p}_3(X)$ to the reals which satisfies the condition that  the sum in each tetrahedron vanishes. Let $\gamma$ be a closed curve on $\partial N_0(X)$, which consists entirely of oriented curve segments which are normal curves with respect to the induced triangulation on $\partial N_0(X)$. Each oriented normal segment of $\gamma$ gets assigned the corresponding value  $\pm(a-a')$, where the sign is determined by the sign of the segment. The sign on the segment is determined by the sign by which the oriented segment traverses the corresponding wedge of the relevant triangle in the triangulation of $\partial N_0(X)$. We now define $H_a(a')(\gamma)$ to be the sum of these real numbers over all segments of $\gamma$.  We observe that  since  both $a$ and $a'$ are contained in $\tilde{\Omega}_{X,r}^{-1}(m)$, then  $H_a(a')(\gamma)$ only depends on the homology class of $\gamma$ in $H_1(\partial N_0(X), \bR)$, hence
$H_a(a')\in H^1(\partial N_0(X), \bR)$ is well defined. Now, we observe that if we change the representative $a-a'$ by changing the representative of either $a$ or $a'$ we do not change $H_a(a')$.
We observe that there is a natural linear structure on $ \tilde{\Omega}_{X,r}^{-1}(m)$ based at $a$. The map $H_a$ is linear with respect to this linear structure. We now consider the dual map
$$H_a^* :  H_1(\partial N_0(X), \bR) \ra \tilde{\Omega}_{X,r}^{-1}(m)^*.
$$
Again, if $\gamma$ is a smooth curve on $\partial N_0(X)$, then $H_a^*(\gamma)$ is a linear function on $ \tilde{\Omega}_{X,r}^{-1}(m)$. We claim that
$$
\{H_a^*(\gamma_1),H_a^*(\gamma_2)\} = \gamma_1\cdot \gamma_2
$$
for all pairs $\gamma_1,\gamma_2$ of closed curves on $\partial N_0(X)$.
This is an easy straightforward check by taking into account the fact that the curves $\gamma_1$ and $\gamma_2$ can be deformed so that each intersection point becomes the midpoint of an edge shared by two triangles as in this picture
\[
 \begin{tikzpicture}[scale=.5]
 \draw (0,0)rectangle(3,2) (0,2)--(3,0);
 \draw[blue,thick,->] (0,1)--(3,1);
 \draw[green,thick,->] (1.5,0)--(1.5,2);
 \end{tikzpicture}
\]
From this it follows that  $H_a$ is also symplectic, and it must thus be injective. A simple dimension count now finishes the argument.
\eproof

\noindent\emph{Proof of Theorem \ref{3-2SS}.}
We just need to prove (in the notation of subsection \ref{3-2PM}), that the natural map from
$\tilde{S}(X)$ to $\tilde{S}(X_e)$ induces a symplectic isomorphism between $\tilde{S}_r(X)$ and $\tilde{S}_r(X_e)$.
We do the symplectic reduction of $\tilde{S}(X)$ in two steps, the first one being only the reduction with respect to the extra edge $e$, and then the reduction with respect to all other edges. Thus, in the first step, we consider only $\tilde{S}(S)$ together with the natural map to $\tilde{S}(S_e)$. The gauge transformation corresponding to edge $e$ leaves invariant the dihedral angles on the edges of $S_e$, and it is a simple explicit check that the initial map descends to a symplectic isomorphism between  $\tilde{S}_r(S)$ and $\tilde{S}(S_e)$. Now, after accomplishing this identification, we remark that the symplectic spaces $\tilde{S}(X\setminus S)\times \tilde{S}_r(S)$ and $\tilde{S}(X_e)=\tilde{S}(X_e\setminus S_e)\times \tilde{S}(S_e)$ are trivially isomorphic, and thus so are their symplectic reductions over all remaining edges of $X$ which are in bijection with all edges of $X_e$.
\eproof

\section{The tetrahedral operator of quantum Teichm\"{u}ller theory}\label{qtt}
In this section, we recall the main ingredients of quantum Teichm\"{u}ller theory, following the approach of \cite{K1,K2,K3}. First, we consider the usual canonical quantization of $T^*(\bR^n)$ with the standard symplectic structure in the position representation, i.e. with respect to the vertical real polarization. The Hilbert space we get is of course just $L^2(\bR^n)$, but it will be convenient for us to consider instead the pre-Hilbert space $\S(\bR^n)$ and its dual space of temperate distributions $\S'(\bR^n)$. The position coordinates $q_i$ and momentum coordinates $p_i$ on
$T^*(\bR^n)$ upon quantization become operators $\POS_i$ and $\MOM_i$ acting on $\S(\bR^n)$ via the formulae
$$
\POS_i(f)(t) = t_if(t) \mbox{ and } \MOM_i(f)(t) = \frac{1}{2 \pi i}\frac{\partial}{\partial t_i} (f)(t),\quad \forall t\in \bR^n
$$
for all $f\in \S(\bR^n)$. It is known that these operators extend continuously to operators on $\S'(\bR^n)$, still satisfying the Heisenberg commutation relations
\begin{equation}
  \label{eq:hei-cr}
[\MOM_i,\MOM_j]=[\POS_i,\POS_j]=0,\quad  [\MOM_i,\POS_j]=(2\pi \imun)^{-1}\delta_{i,j}.
\end{equation}
For any $\alpha \in \bR$ we also define the weighted Schwartz space
$$
\S_\alpha(\bR^n) = e^{\alpha \rho} \S(\bR^n),
$$
where $\rho$ is a smooth function defined on $\bR^n$ such that $\rho$ coincides with the function $|x|$ on the complement of a compact subset of $\bR^n$.

Fix now $\la\in \bC$ such that $\re( \la)\ne0$.
By the usual spectral theorem, we can define operators
\[ \sfu_i=e^{2\pi\la \POS_i},\quad \sfv_i = e^{2\pi\la \MOM_i}
\]
which are contained in $\L(\S_\alpha(\bR^n), \S_{\alpha-\re(\la)}(\bR^n))$ for any $\alpha\in\bR$.
The corresponding commutation relations between $\sfu_i$ and $\sfv_j$ take the form
\[
[\sfu_i,\sfu_j]=[\sfv_i,\sfv_j]=0,\quad\sfu_i\sfv_j=e^{\imun 2\pi\la^2\delta_{i,j}}\sfv_j\sfu_i.
\]
Following \cite{K1}, we consider the operations for $\wv_i =(\sfu_i,\sfv_i)$, $i=1,2$,
\begin{equation}
  \label{eq:q-dot}
\wv_1\cdot\wv_2:=(\sfu_1\sfu_2,\sfu_1\sfv_2+\sfv_1)
\end{equation}
\begin{equation}
  \label{eq:q-star}
\wv_1*\wv_2:=(\sfv_1\sfu_2(\sfu_1\sfv_2+\sfv_1)^{-1},
\sfv_2(\sfu_1\sfv_2+\sfv_1)^{-1})
\end{equation}
\begin{proposition}[\cite{K1}]
Let $\psi(z)$ be some solution of the functional equation
\begin{equation}\label{functional-equation}
\psi(z+\IMUN\la/2)=\psi(z-\IMUN\la/2)(1+e^{2\pi\la z}),\quad z\in\mathbb{C}
\end{equation}
Then, the operator
\begin{equation}
  \label{eq:ptol}
\PTOLEMY=\PTOLEMY_{12}:=e^{2\pi\imun\MOM_1\POS_2}
\psi(\POS_1+\MOM_2-\POS_2)=
\psi(\POS_1-\MOM_1+\MOM_2)e^{2\pi\imun\MOM_1\POS_2}
\end{equation}
defines an element in $\L(\S(\bR^4),\S(\bR^4))$, which
satisfies the equations
\begin{equation}
  \label{eq:lin-ur}
\wv_1\cdot\wv_2\PTOLEMY=\PTOLEMY\wv_1,\quad
\wv_1*\wv_2\PTOLEMY=\PTOLEMY\wv_2
\end{equation}
\end{proposition}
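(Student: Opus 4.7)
The proof decomposes into three steps: (i) the equivalence of the two expressions for $\PTOLEMY$, (ii) continuity on $\S(\bR^4)$, and (iii) verification of the intertwining relations \eqref{eq:lin-ur}. For (i), using \eqref{eq:hei-cr} I compute $[A,\POS_1]=\POS_2$, $[A,\MOM_2]=-\MOM_1$, and $[A,\POS_2]=[A,\MOM_1]=0$ for $A=2\pi\imun\MOM_1\POS_2$. Since these commutators are $c$-numbers, the Baker--Campbell--Hausdorff expansion terminates and conjugation by $e^A$ acts linearly on the four generators, giving $e^A(\POS_1+\MOM_2-\POS_2)e^{-A}=\POS_1-\MOM_1+\MOM_2$. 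Applying this identity inside the spectral calculus of $\psi$ identifies the two displayed expressions.

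For (ii), the factor $e^{2\pi\imun\MOM_1\POS_2}$ is a Fourier-integral operator implementing a linear symplectic transformation, hence continuous on $\S(\bR^4)$. The second factor $\psi(\POS_1+\MOM_2-\POS_2)$ is conjugate, via a metaplectic operator diagonalising the self-adjoint combination $\POS_1+\MOM_2-\POS_2$ into a single coordinate, to multiplication by $\psi$ restricted to a real variable; for the Faddeev choice $\psi=\QDILOG$, which is smooth with all derivatives of polynomial growth along $\bR$, this is a continuous endomorphism of $\S(\bR^4)$, and the composition is continuous.

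The central step is (iii). Unpacking \eqref{eq:q-dot}, the relation $\wv_1\cdot\wv_2\,\PTOLEMY=\PTOLEMY\,\wv_1$ splits into $\sfu_1\sfu_2\PTOLEMY=\PTOLEMY\sfu_1$ and $(\sfu_1\sfv_2+\sfv_1)\PTOLEMY=\PTOLEMY\sfv_1$. The first of these is straightforward: $\sfu_1=e^{2\pi\la\POS_1}$ commutes with the argument of $\psi$ in the first presentation, and passing through $e^{2\pi\imun\MOM_1\POS_2}$ shifts $\POS_1\mapsto\POS_1+\POS_2$, producing the extra factor $\sfu_2$. The second identity is the critical computation: moving $\sfv_1=e^{2\pi\la\MOM_1}$ past $\psi(\POS_1+\MOM_2-\POS_2)$ shifts the argument by $\imun\la$, and the functional equation \eqref{functional-equation} trades this shift for a multiplicative factor of the form $1+e^{2\pi\la(\cdots)}$; the resulting sum of two terms, after commuting the surviving exponentials through $e^{2\pi\imun\MOM_1\POS_2}$, reassembles into $\sfu_1\sfv_2+\sfv_1$ acting on $\PTOLEMY$ from the left. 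The parallel relation $\wv_1\ast\wv_2\,\PTOLEMY=\PTOLEMY\,\wv_2$ is then verified by the analogous push-through applied to $\sfu_2$ and $\sfv_2$, or derived algebraically from the first by inverting the rational expressions in \eqref{eq:q-star}.

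The main obstacle is the careful bookkeeping of the non-commuting exponentials and of the phase factors produced by $c$-number commutators when $\la$ is complex; the functional equation is the only genuinely analytic input, and once it is invoked the remainder of the argument is algebraic rearrangement using the Heisenberg relations.
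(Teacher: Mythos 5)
Your proposal is correct and follows essentially the same route as the paper: continuity is obtained by conjugating $\PTOLEMY$ into a multiplication operator, and the intertwining relations \eqref{eq:lin-ur} are reduced to the three linear identities $\PTOLEMY\POS_1=(\POS_1+\POS_2)\PTOLEMY$, $\PTOLEMY(\MOM_1+\MOM_2)=\MOM_2\PTOLEMY$, $\PTOLEMY(\MOM_1+\POS_2)=(\MOM_1+\POS_2)\PTOLEMY$ together with the single exponential identity $\PTOLEMY e^{2\pi\la\MOM_1}=\bigl(e^{2\pi\la(\POS_1+\MOM_2)}+e^{2\pi\la\MOM_1}\bigr)\PTOLEMY$, which is exactly where the functional equation \eqref{functional-equation} enters. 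Your additional step (i), verifying that the two displayed forms of $\PTOLEMY$ agree via the terminating Baker--Campbell--Hausdorff computation, is not spelled out in the paper but is a harmless (and slightly more careful) addition.
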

\begin{proof}
That $\PTOLEMY\in \L(\S(\bR^4),\S(\bR^4))$ is seen by conjugating it with the Fourier transform in the $(p_1,p_2)$ directions. Then the operator becomes multiplication by a bounded function, which of course maps $\S(\bR^4)$ to $\S(\bR^4)$.

Equations~(\ref{eq:lin-ur}) follow from the following system of equations,
\begin{gather}
\PTOLEMY\POS_1=(\POS_1+\POS_2)\PTOLEMY\label{eq:def-ptol-1}\\
\PTOLEMY(\MOM_1+\MOM_2)=\MOM_2\PTOLEMY\label{eq:def-ptol-2}\\
\PTOLEMY(\MOM_1+\POS_2)=(\MOM_1+\POS_2)\PTOLEMY\label{eq:def-ptol-3}\\
\PTOLEMY e^{2\pi\la\MOM_1}=
(e^{2\pi\la(\POS_1+\MOM_2)}+e^{2\pi\la\MOM_1})\PTOLEMY\label{eq:def-ptol-4}
\end{gather}
Under substitution of (\ref{eq:ptol}), the first three equations become identities while the forth one reduces to the functional equation~(\ref{functional-equation}).
\end{proof}
One particular solution of (\ref{functional-equation}) is given by Faddeev's quantum dilogarithm \cite{F}
\begin{equation}
  \label{eq:qdl-subst}
  \psi(z)=\QDILOGI(z):=1/\QDILOG(z)
\end{equation}
The most important property of the operator~(\ref{eq:ptol}) with $\psi$ given by~(\ref{eq:qdl-subst}) is the pentagon identity
\begin{equation}\label{eq:pentagon}
\PTOLEMY_{12}\PTOLEMY_{13}\PTOLEMY_{23}=\PTOLEMY_{23}\PTOLEMY_{12}
\end{equation}
which follows from the five-term identity (\ref{eq:pent}) (see Appendix~A) satisfied  by $\QDILOG(z)$.
The indices in \eqref{eq:pentagon} have the standard meaning, for example, $\PTOLEMY_{13}$ is obtained from $\PTOLEMY_{12}$ by replacing $\MOM_2$ and $\POS_2$ by $\MOM_3$ and $\POS_3$ respectively,
and so on. In what follows, we always assume that the parameter $\la$ is chosen so that
\[
\hbar:=(\la+\la^{-1})^{-2}\in\bR_+
\]
\section{Charged tetrahedral operators}\label{CTO}
For any positive real $a$ and $c$ such that $b:=\frac12-a-c$ is also positive, we define the charged $T$-operators
\begin{equation}\label{eq:charged-T}
\PTOLEMY(a,c)=e^{-\pi\imun\cla^2(4(a-c)+1)/6}e^{4\pi\imun\cla(c\POS_2-a\POS_1)}\PTOLEMY
e^{-4\pi\imun\cla(a\MOM_2+c\POS_2)}
\end{equation}
and
\[
\bar\PTOLEMY(a,c)=e^{\pi\imun\cla^2(4(a-c)+1)/6}e^{-4\pi\imun\cla(a\MOM_2+c\POS_2)}
\bar\PTOLEMY e^{4\pi\imun\cla(c\POS_2-a\POS_1)}
\]
where $\bar\PTOLEMY:=\PTOLEMY^{-1}$ and
$$
\cla := \imun (\la + \la^{-1})/2.
$$
These are direct analogues of the charged $6j$-symbols of \cite{K4}, see also \cite{GKT} for a general theory of charged $6j$-symbols. It is elementary to prove that $T(a,c) : \S(\bR^2) \ra \S(\bR^2)$ and that $\bar\PTOLEMY(a,c) : \S(\bR^2) \ra \S(\bR^2)$.

Substituting \eqref{eq:ptol}, we obtain
\[
\PTOLEMY(a,c)=e^{2\pi\imun\MOM_1\POS_2}\cff_{a,c}(\POS_1-\POS_2+\MOM_2)
\]
where
\[
\cff_{a,c}(x):=\psi(x-2\cla(a+c))e^{-4\pi\imun\cla a(x-\cla(a+c))}e^{-\pi\imun\cla^2(4(a-c)+1)/6}
\]
We have the following formula\footnote{From now on, we freely switch to Dirac's bra-ket notation which is convenient in calculations.} for $\PTOLEMY(a,c)\in \S'(\bR^4)$
\[
\langle x_0,x_2\vert\PTOLEMY(a,c)\vert x_1,x_3\rangle=\delta(x_0+x_2-x_1)\tilde\cff'_{a,c}(x_3-x_2)e^{2\pi\imun x_0(x_3-x_2)}
\]
where
\[
\tilde\cff'_{a,c}(x):=e^{-\pi\imun x^2}\tilde\cff_{a,c}(x),\quad \tilde\cff_{a,c}(x):=\int_\REALS\cff_{a,c}(y)e^{-2\pi\imun xy}dy
\]
Notice that the conditions we imposed on $a$ and $c$ ensure that the Fourier integral here is absolutely convergent. The Fourier transformation formula for the Faddeev's quantum dilogarithm (see Appendix~A) leads to the identity
\[
\tilde\cff'_{a,c}(x)=e^{-\frac{\pi\imun}{12}}\cff_{c,b}(x),
\]
recalling that $b:=\frac12-a-c$. Moreover, with respect to complex conjugation, we also have
\[
\overline{\cff_{a,c}(x)}=e^{-\frac{\pi\imun}{6}}e^{\pi\imun x^2}
\cff_{c,a}(-x)=e^{-\frac{\pi\imun}{12}}\tilde\cff_{b,c}(-x),
\]
These can be combined to calculate that
\[
\overline{\tilde\cff'_{a,c}(x)}=e^{\frac{\pi\imun}{12}}\overline{\cff_{c,b}(x)}
=e^{-\frac{\pi\imun}{12}}e^{\pi\imun x^2}\cff_{b,c}(-x).
\]
We can use this to obtain the following formula of $\bar\PTOLEMY(a,c)$:
\begin{multline}\label{eq:tbarkernel}
\langle x,y\vert\bar\PTOLEMY(a,c)\vert u,v\rangle=
\overline{\langle u,v\vert\PTOLEMY(a,c)\vert x,y\rangle}\\
=\delta(u+v-x)\overline{\tilde\cff'_{a,c}(y-v)}e^{-2\pi\imun u(y-v)}\\
=\delta(u+v-x)\cff_{b,c}(v-y)e^{-\frac{\pi\imun}{12}}e^{\pi\imun (v-y)^2}
e^{-2\pi\imun u(y-v)}
\end{multline}

\section{Charged Pentagon identity}\label{CPI}
\begin{proposition}\label{charged-pentagon}
The following charged pentagon equation is satisfied
\begin{equation}\label{eq:charged-pentagon}
  \PTOLEMY_{12}(a_4,c_4) \, \PTOLEMY_{13}(a_2,c_2) \PTOLEMY_{23}(a_0,c_0) =
e^{\pi\imun\cla^2P_e/3}\PTOLEMY_{23}(a_1,c_1) \PTOLEMY_{12}(a_3,c_3)
\end{equation}
where
\[
P_e=2(c_0+a_2+c_4)-\frac12
\]
and $a_0,a_1, a_2,a_3, a_4, c_0, c_1, c_2, c_3, c_4
 \in\REALS$  are such that
\begin{equation}\label{pentagonconditions}
 a_1=a_0+a_2,\ a_3=a_2+a_4,\ c_1=c_0+a_4,\ c_3=a_0+c_4,\ c_2= c_1+c_3.
\end{equation}
\end{proposition}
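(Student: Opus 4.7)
The strategy is to reduce the charged pentagon \eqref{eq:charged-pentagon} to the uncharged pentagon \eqref{eq:pentagon} by transporting the charge-dependent exponential factors across the uncharged $\PTOLEMY$-operators. Using \eqref{eq:charged-T}, I would write each charged operator as
\[
\PTOLEMY_{ij}(a,c)=\zeta(a,c)\,L_{ij}(a,c)\,\PTOLEMY_{ij}\,R_{ij}(a,c),
\]
with scalar $\zeta(a,c)=e^{-\pi\imun\cla^2(4(a-c)+1)/6}$, left Weyl factor $L_{ij}(a,c)=e^{4\pi\imun\cla(c\POS_j-a\POS_i)}$, and right Weyl factor $R_{ij}(a,c)=e^{-4\pi\imun\cla(a\MOM_j+c\POS_j)}$. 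Substituting this into both sides of \eqref{eq:charged-pentagon} turns the identity into a statement about long products of Weyl exponentials interleaved with three uncharged $\PTOLEMY$'s on the left and two on the right, decorated by the scalar prefactor $\zeta(a_4,c_4)\zeta(a_2,c_2)\zeta(a_0,c_0)$ on the left and $\zeta(a_1,c_1)\zeta(a_3,c_3)$ on the right.

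The next step is to transport the $L$-factors leftward and the $R$-factors rightward through the uncharged $\PTOLEMY$'s using the intertwining relations \eqref{eq:def-ptol-1}, \eqref{eq:def-ptol-2}, \eqref{eq:def-ptol-3}, which exponentiate to identities of the form $\PTOLEMY_{ij}\,e^{tA}=e^{tA'}\,\PTOLEMY_{ij}$ with $A'$ obtained from $A$ by a linear substitution. Because these three relations are linear in the Heisenberg generators, the commutations produce no scalar phases, only substitutions of the exponents. (The nonlinear relation \eqref{eq:def-ptol-4} is not used at this stage; it is encoded in \eqref{eq:pentagon} itself.) After this rearrangement, the three uncharged factors $\PTOLEMY_{12}\PTOLEMY_{13}\PTOLEMY_{23}$ stand consecutively in the middle of the left-hand side and can be replaced by $\PTOLEMY_{23}\PTOLEMY_{12}$ via \eqref{eq:pentagon}, matching the uncharged content of the right-hand side.

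What remains is an identity between two products of Weyl exponentials in $\POS_i,\MOM_i$ for $i=1,2,3$, each multiplied by a scalar. Normal-ordering both sides via the Heisenberg relations \eqref{eq:hei-cr} and matching the linear exponents coefficient by coefficient on each coordinate pair $(\POS_i,\MOM_i)$ produces precisely the five linear constraints \eqref{pentagonconditions}; these are the requirements that the total shifts generated by the various $L$'s and $R$'s after passing through their respective $\PTOLEMY$'s agree on the two sides. The main obstacle is the bookkeeping of scalar phases in this last step: each commutation of two Weyl exponentials contributes a phase that is bilinear in the charges $a_\bullet,c_\bullet$, and the content of the proposition is that, under \eqref{pentagonconditions}, the sum of all such BCH phases combines with the difference of the $\zeta$-factors on the two sides to collapse into the single expression $e^{\pi\imun\cla^2P_e/3}$ with $P_e=2(c_0+a_2+c_4)-\tfrac12$. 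This collapse can be verified by a direct (if tedious) computation; as a sanity check, \eqref{pentagonconditions} coincides with the shape-matching system \eqref{P32a} of the $3-2$ Pachner move, reflecting the geometric origin of the identity.
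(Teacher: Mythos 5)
Your proposal follows essentially the same route as the paper: the same decomposition $\PTOLEMY(a,c)=\nu(a-c)\,\xi^{a\POS_1-c\POS_2}\PTOLEMY\,\xi^{a\MOM_2+c\POS_2}$, transport of the Weyl factors through the uncharged operators via the linear intertwining relations \eqref{eq:def-ptol-1}--\eqref{eq:def-ptol-3}, application of the uncharged pentagon \eqref{eq:pentagon}, and a final scalar check. The only (presentational) difference is that the paper separates the bookkeeping cleanly --- the ratio of the $\nu$-prefactors alone yields $e^{\pi\imun\cla^2P_e/3}$, while the residual Weyl/BCH phases cancel exactly via the quadratic identity $a_2c_2+a_4c_4+a_4(a_0-c_3)=a_2(a_4+c_0+c_3)$ --- whereas you fold the two contributions together; both reduce to the same straightforward verification under \eqref{pentagonconditions}.
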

This is direct analogue of the charged pentagon identity of \cite{K4}, see also \cite{GKT}.
\begin{proof}
We have that
\[
\PTOLEMY(a,c)=\nu(a-c)\PTOLEMY'(a,c)
\]
where
\[
\nu(x):=e^{-\pi\imun\cla^2(4x+1)/6}
\]
and
\[
\PTOLEMY'(a,c):=\xi^{a\POS_1-c\POS_2}\PTOLEMY
\xi^{a\MOM_2+c\POS_2},
\]
where $\xi:=e^{-4\pi\imun\cla}$.
It is straightforward to check that under conditions~\eqref{pentagonconditions} we have the identity
\[
\frac{\nu(a_4-c_4)\nu(a_2-c_2)\nu(a_0-c_0)}{\nu(a_1-c_1)\nu(a_3-c_3)}=e^{\pi\imun\cla^2P_e/3}
\]
so that equation~\eqref{eq:charged-pentagon} is equivalent to the equation
\begin{equation}\label{eq:charged-pentagon-prime}
\PTOLEMY_{12}'(a_4,c_4) \, \PTOLEMY_{13}'(a_2,c_2) \PTOLEMY_{23}'(a_0,c_0) =
\PTOLEMY_{23}'(a_1,c_1) \PTOLEMY_{12}'(a_3,c_3).
\end{equation}
For the right hand side of the latter, we have
\begin{multline}\label{eq:cal-rhs}
\PTOLEMY_{23}'(a_1,c_1) \PTOLEMY_{12}'(a_3,c_3)=
\xi^{a_1\POS_2-c_1\POS_3}\PTOLEMY_{23}\xi^{a_1\MOM_3+c_1\POS_3}
\xi^{a_3\POS_1-c_3\POS_2}\PTOLEMY_{12}\xi^{a_3\MOM_2+c_3\POS_2}\\
=
\xi^{a_3\POS_1+a_1\POS_2-c_1\POS_3}\underline{\PTOLEMY_{23}
\xi^{-c_3\POS_2}}\PTOLEMY_{12}\xi^{a_3\MOM_2+c_3\POS_2+a_1\MOM_3+c_1\POS_3}\\
=
\xi^{a_3\POS_1+(a_1-c_3)\POS_2-(c_1+c_3)\POS_3}\PTOLEMY_{23}
\PTOLEMY_{12}\xi^{a_3\MOM_2+c_3\POS_2+a_1\MOM_3+c_1\POS_3}
\end{multline}
where the underlined fragment is transformed by using equation~\eqref{eq:def-ptol-1}.
For the left hand side of equation~\eqref{eq:charged-pentagon-prime}, by underlining the parts to be transformed either by trivial commutativity or the Heisenberg commutation relations or else according to one of equations~\eqref{eq:def-ptol-1}--\eqref{eq:def-ptol-3}, we have that
\begin{multline}\label{eq:cal-lhs}
 \xi^{c_4\POS_2-a_4\POS_1}\PTOLEMY_{12}'(a_4,c_4) \, \PTOLEMY_{13}'(a_2,c_2) \PTOLEMY_{23}'(a_0,c_0)\xi^{-a_0\MOM_3-c_0\POS_3}\\=
\underline{\PTOLEMY_{12}
\xi^{a_4\MOM_2+c_4\POS_2}
\xi^{a_2\POS_1-c_2\POS_3}
\PTOLEMY_{13}}
\xi^{a_2\MOM_3+c_2\POS_3}
\xi^{a_0\POS_2-c_0\POS_3}\PTOLEMY_{23}
\\
=
\xi^{-c_2\POS_3}\underline{\PTOLEMY_{12}\xi^{a_2\POS_1}}
\PTOLEMY_{13}\underline{\xi^{a_4\MOM_2+c_4\POS_2}
\xi^{a_2\MOM_3+c_2\POS_3}
\xi^{a_0\POS_2-c_0\POS_3}}\PTOLEMY_{23}\\
=
\xi^{a_2(\POS_1+\POS_2)-c_2\POS_3}\PTOLEMY_{12}
\PTOLEMY_{13}\xi^{\cla (a_2c_2+a_4c_4)+a_2\MOM_3}\xi^{a_4(\MOM_2+\POS_3)}
\underline{\xi^{c_3(\POS_2+\POS_3)}\PTOLEMY_{23}}\\
=
\xi^{a_2(\POS_1+\POS_2)-c_2\POS_3}\PTOLEMY_{12}
\PTOLEMY_{13}\xi^{\cla (a_2c_2+a_4c_4)+a_2\MOM_3}\underline{\xi^{a_4(\MOM_2+\POS_3)}
\PTOLEMY_{23}}
\xi^{c_3\POS_2}\\
=
\xi^{a_2(\POS_1+\POS_2)-c_2\POS_3}\PTOLEMY_{12}
\PTOLEMY_{13}\underline{\xi^{\cla (a_2c_2+a_4c_4)+a_2\MOM_3}
\PTOLEMY_{23}}\xi^{a_4(\MOM_2+\POS_3)}
\xi^{c_3\POS_2}\\
=
\xi^{a_2(\POS_1+\POS_2)-c_2\POS_3}\PTOLEMY_{12}
\PTOLEMY_{13}
\PTOLEMY_{23}\xi^{\cla (a_2c_2+a_4c_4)+a_2(\MOM_2+\MOM_3)}\xi^{a_4(\MOM_2+\POS_3)}
\xi^{c_3\POS_2}.
\end{multline}
By comparing the equations~\eqref{eq:cal-rhs} and \eqref{eq:cal-lhs} and using the pentagon identity~\eqref{eq:pentagon}, we conclude that equation~\eqref{eq:charged-pentagon-prime} is equivalent to the identity
\[
\xi^{\cla (a_2c_2+a_4c_4)+a_2(\MOM_2+\MOM_3)}\xi^{a_4(\MOM_2+\POS_3)}
\xi^{c_3\POS_2+a_0\MOM_3+c_0\POS_3}=\xi^{a_3\MOM_2+c_3\POS_2+a_1\MOM_3+c_1\POS_3},
\]
which, in turn, is equivalent to the quadratic scalar identity
\[
a_2c_2+a_4c_4+a_4(a_0-c_3)=a_2(a_4+c_0+c_3).
\]
The latter is verified straightforwardly by using equations~\eqref{pentagonconditions}.
\end{proof}

\section{Fundamental Lemma}

Define two complex temperate ket-distributions $\sA\equiv\vert\sA\rangle,\sB\equiv\vert\sB\rangle\in\S'(\bR^2)$ by the formulae
\[
\langle x,y\vert\sA\rangle=\delta(x+y)e^{\imun\pi(x^2+\frac1{12})},\quad
 \langle x,y\vert\sB\rangle=e^{\imun\pi(x-y)^2}
\]
We also define two bra-distributions $\bar\sA\equiv\langle\bar\sA\vert,\bar\sB\equiv\langle\sB\vert$ by the formulae
\[
\langle\bar\sA\vert x,y\rangle= \overline{\langle x,y\vert\sA\rangle},\quad
\langle\bar\sB\vert x,y\rangle= \overline{\langle x,y\vert\sB\rangle}
\]
\begin{lemma}[Fundamental Lemma]\label{fund-lem} The following identities\footnote{We now also switch to the physicist integral notation $\int dx f(x)$ and we further also use it to denote push forward of distributions.} are satisfied:
\begin{equation}\label{eq:p01}
\int_{\bR^2}dsdt\,\langle\bar\sA\vert v,s\rangle\langle x,s\vert\PTOLEMY(a,c)\vert u,t\rangle
\langle t,y\vert\sA\rangle=\langle x,y\vert\bar\PTOLEMY(a,b)\vert u,v\rangle
\end{equation}
\begin{equation}\label{eq:p12}
\int_{\bR^2}dsdt\,\langle\bar\sA\vert u,s\rangle\langle s,x\vert\PTOLEMY(a,c)\vert v,t\rangle
\langle t,y\vert\sB\rangle=\langle x,y\vert\bar\PTOLEMY(b,c)\vert u,v\rangle
\end{equation}
\begin{equation}\label{eq:p23}
\int_{\bR^2}dsdt\,\langle\bar\sB\vert u,s\rangle\langle s,y\vert\PTOLEMY(a,c)\vert t,v\rangle
\langle t,x\vert\sB\rangle=\langle x,y\vert\bar\PTOLEMY(a,b)\vert u,v\rangle
\end{equation}
where $a,b,c\in\bR_+$ are such that $a+b+c=\frac12$.
\end{lemma}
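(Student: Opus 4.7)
The plan is to verify each of the three identities by direct substitution of the explicit kernels recalled in Section~\ref{CTO} and of the defining formulae for $\sA$, $\sB$, $\bar\sA$, $\bar\sB$. The two key inputs from Section~\ref{CTO} are the Fourier--transform identity
\[
\tilde\cff'_{a,c}(x)=e^{-\imun\pi/12}\cff_{c,b}(x),\quad b=\tfrac12-a-c,
\]
together with the factorisation $\tilde\cff'_{a,c}(t)=e^{-\imun\pi t^2}\tilde\cff_{a,c}(t)$, which will cancel against the Gaussian factors coming from the $\sB$-distributions and reduce the remaining integrals to Fourier inversion for $\cff_{a,c}$. In every case the target to hit is the kernel~\eqref{eq:tbarkernel} of $\bar\PTOLEMY$ with appropriately relabelled charges.

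For \eqref{eq:p01}, both distributions appearing are of type $\sA$, so each contributes a delta-function in its two arguments. The double integral collapses at once to a pointwise evaluation at $s=-v$, $t=-y$; inserting $\tilde\cff'_{a,c}(v-y)=e^{-\imun\pi/12}\cff_{c,b}(v-y)$ and using the delta $x=u+v$ coming from the $\PTOLEMY$-kernel, the remaining quadratic phase rearranges into $e^{\imun\pi(v-y)^2}e^{-2\pi\imun u(y-v)}$, giving exactly the right-hand side of~\eqref{eq:tbarkernel} with $(a,c)$ replaced by $(a,b)$ (whose accompanying auxiliary parameter is then $c$).

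For \eqref{eq:p12}, the $\bar\sA$-delta fixes $s=-u$ and the $\PTOLEMY$-delta gives $x=u+v$, leaving a single Gaussian integral over $t$ of the form $\int \tilde\cff'_{a,c}(t-x)\,e^{-2\pi\imun u(t-x)}\,e^{\imun\pi(t-y)^2}\,dt$. A translation $t\mapsto t+x$ followed by the factorisation $\tilde\cff'_{a,c}(t)e^{\imun\pi t^2}=\tilde\cff_{a,c}(t)$ turns this into an inverse Fourier transform that evaluates to $\cff_{a,c}(v-y)$, and completing the square matches~\eqref{eq:tbarkernel} at $(b,c)$. For \eqref{eq:p23}, the distributions $\sB$ and $\bar\sB$ contribute Gaussians with opposite signs in $s^2$ which cancel exactly; after the $\PTOLEMY$-delta forces $t=s+y$, the remaining $s$-integration is purely oscillatory and produces the delta $\delta(x-u-v)$, and a final completion of the square matches~\eqref{eq:tbarkernel} at $(a,b)$.

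The main obstacle is the bookkeeping of quadratic phases and the ubiquitous $e^{-\imun\pi/12}$ factors: once the integrations are performed, each case reduces to a single scalar quadratic identity in the external variables, verified using $a+b+c=\tfrac12$. No further analytic input is needed beyond Fourier inversion on $\S'(\REALS)$ and the identities for $\cff_{a,c}$ recorded at the end of Section~\ref{CTO}.
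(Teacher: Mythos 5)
Your proposal is correct and follows essentially the same route as the paper's proof: direct substitution of the explicit kernels, collapse of the delta-functions coming from $\sA$, $\bar\sA$ and the $\PTOLEMY$-kernel, and Fourier inversion for $\cff_{a,c}$ via $\tilde\cff'_{a,c}=e^{-\imun\pi/12}\cff_{c,b}$ and the factorisation $\tilde\cff'_{a,c}(t)=e^{-\imun\pi t^2}\tilde\cff_{a,c}(t)$, matching the result against~\eqref{eq:tbarkernel}. The only point the paper makes that you omit is the preliminary one-sentence check that the wave front set condition of Theorem~\ref{mult} is satisfied and that the products extend appropriately, so that the distributional multiplications and push-forwards are legitimate; this is worth stating but is not a substantive difference in approach.
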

These identities are direct analogues of the identities (3.8)--(3.10) of \cite{K4} in the root of unity case, see also the fundamental lemma~6 of \cite{GKT}.
\begin{proof} First of all, we notice that the Wave front set condition from Theorem \ref{mult} is satisfied in these three cases, so we can multiply these distributions. Further we notice that the products extend appropriately, so that the push forwards on the left hand sides of (\ref{eq:p01}) -- (\ref{eq:p23}) can be performed.

For the left hand side of equation~\eqref{eq:p01}, we have that
\begin{multline*}
\int_{\bR^2}dsdt\,\langle\bar\sA\vert v,s\rangle\langle x,s\vert\PTOLEMY(a,c)\vert u,t\rangle
\langle t,y\vert\sA\rangle=\langle x,-v\vert\PTOLEMY(a,c)\vert u,-y\rangle
e^{\imun\pi (y^2-v^2)}\\
=\delta(x-v-u)\tilde\cff'_{a,c}(-y+v)e^{\imun 2\pi x(-y+v)}e^{\imun\pi (y^2-v^2)}\\
=\delta(u+v-x)\cff_{c,b}(v-y)e^{\imun\pi(v-y)(2u+v-y)}e^{-\frac{\imun\pi}{12}},
\end{multline*}
where the last expression is equal to the right hand side of equation~\eqref{eq:p01} due to
equation~\eqref{eq:tbarkernel}.

For the left hand side of equation~\eqref{eq:p12}, we have that
\begin{multline*}
\int_{\bR^2}dsdt\,\langle\bar\sA\vert u,s\rangle\langle s,x\vert\PTOLEMY(a,c)\vert v,t\rangle
\langle t,y\vert\sB\rangle\\
=e^{-\frac{\imun\pi}{12}-\imun\pi u^2}\int_{\bR}dt\,\langle -u,x\vert\PTOLEMY(a,c)\vert v,t\rangle
\langle t,y\vert\sB\rangle\\
=e^{-\frac{\imun\pi}{12}-\imun\pi u^2}\delta(-u+x-v)\int_{\bR}dt\,
\tilde\cff'_{a,c}(t-x)e^{-\imun 2\pi u(t-x)}e^{\imun\pi(t-y)^2}\\
=e^{-\frac{\imun\pi}{12}-\imun\pi u^2}\delta(u+v-x)\int_{\bR}dt\,
\tilde\cff_{a,c}(t)e^{-\imun\pi t^2}e^{-\imun 2\pi ut}e^{\imun\pi(t+x-y)^2}\\
=e^{-\frac{\imun\pi}{12}}\delta(u+v-x)e^{\imun\pi((x-y)^2-u^2)}\int_{\bR}dt\,
\tilde\cff_{a,c}(t)e^{\imun2\pi t(v-y)}\\
=e^{-\frac{\imun\pi}{12}}\delta(u+v-x)e^{\imun\pi(x-y-u)(x-y+u)}
\cff_{a,c}(v-y)\\
=e^{-\frac{\imun\pi}{12}}\delta(u+v-x)e^{\imun\pi(v-y)(2u+v-y)}
\cff_{a,c}(v-y),
\end{multline*}
where the last expression is equal to the right hand side of \eqref{eq:p12} due to
\eqref{eq:tbarkernel}.

Finally, for the left hand side of equation~\eqref{eq:p23}, we have that
\begin{multline*}
\int_{\bR^2}dsdt\,\langle\bar\sB\vert u,s\rangle\langle s,y\vert\PTOLEMY(a,c)\vert t,v\rangle
\langle t,x\vert\sB\rangle\\
=\tilde\cff'_{a,c}(v-y)\int_{\bR}ds\,\langle\bar\sB\vert u,s\rangle
e^{\imun 2\pi s(v-y)}
\langle s+y,x\vert\sB\rangle\\
=\tilde\cff'_{a,c}(v-y)e^{\imun\pi(y-x-u)(y-x+u)}\int_{\bR}ds\,
e^{\imun 2\pi s(u+v-x)}\\
=e^{-\frac{\imun\pi}{12}}\cff_{c,b}(v-y)e^{\imun\pi(y-v-2u)(y-v)}
\delta(u+v-x),
\end{multline*}
where the last expression is equal to the right hand side of  \eqref{eq:p12} due to
\eqref{eq:tbarkernel}.
\end{proof}

\section{TQFT rules and symmetries of tetrahedral partitions functions}\label{tqft-rules}
We consider oriented surfaces with a restricted (oriented) cellular structure: all 2-cells are either bigons or triangles with their natural cellular structures. For example, the unit disk $D$ in $\mathbb{C}$ has four different bigon structures with two 0-cells $\{e^0_\pm(*)=\pm1\}$, two 1-cells $e^1_\pm\colon [0,1]\to D$ given by
\[
e^1_+(t)=e^{i\pi t}\ \mathrm{or}\ e^1_+(t)=-e^{-i\pi t},\quad
e^1_-(t)=e^{-i\pi t}\ \mathrm{or}\ e^1_-(t)=-e^{i\pi t}.
\]
An \emph{inessential bigon} is the one
where both edges are parallel. For the unit disk the two cell structures with $\{e^1_+(t)=e^{i\pi t},e^1_-(t)=e^{-i\pi t}\}$ and $\{e^1_+(t)=-e^{-i\pi t},e^1_-(t)=-e^{i\pi t}\}$ are inessential. Inessential bigons can be eliminated by naturally contracting them to a 1-cell, which in the case of the unit disk corresponds to projection to the real axis, $z\mapsto \re z$.

For triangles we forbid cyclic orientation of the 1-cells. For example, the unit disk admits eight different triangle structures with 0-cells at third roots of unity $\{e^0_0(*)=1, e^0_\pm(*)=e^{\pm 2\pi i/3}\}$, but only six of them are admissible.

We start defining our TQFT by associating the vector space $\mathbb{C}$ to all bigons, and the spaces $\S'(\REALS)$ to all triangles. For any oriented surface $\Sigma$, connected or not, with boundary or without, with a fixed cellular structure where all 2-cells are either essential bigons or admissible triangles, we associate the space $\S'(\bR^{\Delta_2(\Sigma)})$ where in $\Delta_2(\Sigma)$ we include only triangular cells.

 Let $T$ be a shaped tetrahedron in $\REALS^3$ with ordered vertices $v_i$, enumerated by the integers $0,1,2,3$.
We define the partition function $\Psi(T):=\widetilde{\parfun(T)}$ by the following formula
\begin{equation}\label{tet-fun}
\langle x\vert\Psi(T)\rangle=
\left\{
\begin{array}{cc}
\langle x_0,x_2\vert\PTOLEMY(c(v_0v_1),c(v_0v_3))\vert x_1,x_3\rangle&\mathrm{if}\ \sign(T)=1;\\
\langle x_1,x_3\vert\bar\PTOLEMY(c(v_0v_1),c(v_0v_3))\vert x_0,x_2\rangle&\mathrm{if}\ \sign(T)=-1.
\end{array}
\right.
\end{equation}
where
\[
x_i=x(\partial_iT),\quad i\in\{0,1,2,3\},
\]
and
\[
 c:=\frac1{2\pi}\alpha_T\colon \Delta_1(T)\to \REALS_+.
 \]
Since this definition is adaptation of the one of \cite{K4} to the infinite dimensional setting of quantum Teichm\"uller theory, the map $c$ will be called \emph{charge} which is actually nothing else but the shape structure with the dihedral angles measured by circumference of a circle of radius $\frac1{2\pi}$.

Next, we consider cones over essential bigons with induced cellular structure. Overall, the condition of admissibility of triangular faces leaves four isotopy classes of such cones. Let us describe them by using the cone in $\mathbb{R}^3\simeq\mathbb{C}\times\mathbb{R}$ over the unit disk in $\mathbb{C}$ with the apex at $(0,1)\in\mathbb{C}\times\mathbb{R}$. The four possible cellular structures are identified by three 0-cells $\{e^0_\pm(*)=(\pm1,0), e^0_0(*)=(0,1)\}$, and four 1-cells
\[
\{e^1_{0\pm}(t)=(\pm e^{i\pi t},0),\ e^1_{1\pm}(t)=(\pm(1-t),t)\}
\]
or
\[
\{e^1_{0\pm}(t)=(\mp e^{-i\pi t},0),\ e^1_{1\pm}(t)=(\pm(1-t),t)\}
\]
or
\[
\{e^1_{0\pm}(t)=(\pm e^{i\pi t},0),\ e^1_{1\pm}(t)=(\pm t,1-t)\}
\]
or else
\[
\{e^1_{0\pm}(t)=(\mp e^{-i\pi t},0),\ e^1_{1\pm}(t)=(\pm t,1-t)\}.
\]
Let us call them cones of type $A_+$, $A_-$, $B_+$, and $B_-$, respectively.
We add TQFT rules by associating to these cones the following complex temperate distributions over $\bR^2$:
\[
\langle x,y\vert\Psi(A_\pm)\rangle=\delta(x+y)e^{\pm\pi\imun(x^2+\frac1{12})},\quad
\langle x,y\vert\Psi(B_\pm)\rangle=e^{\pm\pi\imun(x-y)^2}
\]
Notice that our cones are symmetric with respect to rotation by the angle $\pi$ around the vertical coordinate axis, and this symmetry corresponds to the symmetry with respect to exchange of the arguments $x$ and $y$.

Now we give a TQFT description of the tetrahedral symmetries of the tetrahedral partition functions generated by the identities~\eqref{eq:p01}--\eqref{eq:p23}. Let us take a positive tetrahedron with vertices $v_0,\ldots,v_3$. The edge $v_0v_1$ is incident to two faces opposite to $v_2$ and $v_3$. We can glue two cones, one of type $A_+$ and another one of type $A_-$, to these two faces in the way that one of the edges of the base bigons are glued to the initial edge $v_0v_1$, of course, by respecting all the orientations. Namely, we glue a cone of type $A_+$ to the face opposite to vertex $v_3$ so that the apex of the cone is glued to vertex $v_2$, and we glue a cone of type $A_-$ to the face opposite to vertex $v_2$ so that the apex of the cone is glued to vertex $v_3$. Finally, we can glue naturally the two bigons with each other by pushing continuously the initial edge inside the ball and eventually closing the gap like a book. The result of all these operations is that we obtain a negative tetrahedron, where the only difference with respect to the initial tetrahedron is that the orientation of the initial edge $v_0v_1$ has changed and this corresponds to changing the order of these vertices. Notice that as these vertices are neighbors, their exchange does not affect the orientation of all other edges. Identity~\eqref{eq:p01} corresponds exactly to these geometric operations  on the level of the tetrahedral partition functions.

 Similarly, we can describe the transformations with respect to change of orientations of the edges $v_1v_2$ and $v_2v_3$ and relate them to identities~\eqref{eq:p12} and \eqref{eq:p23}, respectively. Altogether, these three transformations correspond to canonical generators of the permutation group of four elements which is the complete symmetry group of a tetrahedron.

\section{Gauge transformatoin properties of the partition function}\label{GTP}

Let $SP_n$ be the suspension of an $n$-gone with its natural triangulation into $n$ tetrahedra having one common central interior edge $e$. For each tetrahedron in $SP_n$, let us choose the vertex order so that the tetrahedron is positive and the common edge $e$ connects its last two vertices. With this choice, and enumerating the tetrahedra cyclically, the quantum partition function of $SP_n$ is written as follows
\[
\parfun(SP_n,\vec{a},\vec{c})=\tr_0(\PTOLEMY_{01}(a_1,c_1)\PTOLEMY_{02}(a_2,c_2)\cdots \PTOLEMY_{0n}(a_n,c_n)),
\]
where $\vec{a}=(a_1,\ldots,a_n)$ and $\vec{c}=(c_1,\ldots,c_n)$.
The total charge $Q_e$ around the interior edge $e$ (the weight divided by $2\pi$) is given by the formula
\[
Q_e=a_1+a_2+\cdots+a_n,
\]
so that the gauge transformation corresponding to edge $e$ shifts simultaneously all $c_i$'s by one and the same amount:
\[
\parfun(SP_n,\vec{a},\vec{c})\to \parfun(SP_n,\vec{a},\vec{c}+\lambda\vec{1}),
\]
where $\vec{1}=(1,\ldots,1)$.
\begin{proposition}\label{gauge-trans}
One has the following equality
\[
\parfun(SP_n,\vec{a},\vec{c}+\lambda\vec{1})=\parfun(SP_n,\vec{a},\vec{c})e^{2\pi\imun \cla^2(n-6Q_e)\lambda/3}
\]
\end{proposition}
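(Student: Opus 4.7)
The plan is to analyze the shift $c_i\mapsto c_i+\lambda$ at the level of a single charged operator $\PTOLEMY_{0i}(a_i,c_i)$, combine the resulting transformations over the cyclic product, and then show that the apparent operator conjugation collapses after the trace. For one operator, starting from the form $\PTOLEMY_{0i}(a,c)=e^{2\pi\imun\MOM_0\POS_i}\cff_{a,c}(\POS_0-\POS_i+\MOM_i)$ and substituting $c\to c+\lambda$ directly in the definition of $\cff_{a,c}$, I would verify the elementary identity
\[
\cff_{a,c+\lambda}(x)=e^{2\pi\imun\cla^2\lambda/3}\,e^{-4\pi\imun\cla^2 a\lambda}\,\cff_{a,c}(x-2\cla\lambda).
\]
Since $[4\pi\imun\cla\lambda\POS_i,\,\POS_0-\POS_i+\MOM_i]=-2\cla\lambda$ is central, the shift $x\mapsto x-2\cla\lambda$ in the argument of any function of $\POS_0-\POS_i+\MOM_i$ is implemented by conjugation with $U_i(\lambda):=e^{4\pi\imun\cla\lambda\POS_i}$; and $U_i(\lambda)$ commutes with the prefactor $e^{2\pi\imun\MOM_0\POS_i}$ (the two generators commute). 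Hence
\[
\PTOLEMY_{0i}(a_i,c_i+\lambda)=e^{2\pi\imun\cla^2\lambda/3}\,e^{-4\pi\imun\cla^2 a_i\lambda}\,U_i(\lambda)\,\PTOLEMY_{0i}(a_i,c_i)\,U_i(-\lambda).
\]

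Since each $U_i(\pm\lambda)$ acts only on the external space $i$ and is a pure position-operator exponential, it commutes with every $\PTOLEMY_{0j}(a_j,c_j)$ and every $U_j(\pm\lambda)$ for $j\ne i$. I can therefore slide all the $U_i(\lambda)$'s to the far left of the product and all the $U_i(-\lambda)$'s to the far right, accumulating the scalar factors into $e^{2\pi\imun\cla^2\lambda n/3}\cdot e^{-4\pi\imun\cla^2\lambda Q_e}=e^{2\pi\imun\cla^2(n-6Q_e)\lambda/3}$. Applying $\tr_0$, and observing that $U_{\text{tot}}(\lambda):=\prod_iU_i(\lambda)=e^{4\pi\imun\cla\lambda\sum_i\POS_i}$ contains no operator on the $0$-th space, I can pull it outside the trace as an outer conjugation:
\[
\parfun(SP_n,\vec a,\vec c+\lambda\vec 1)=e^{2\pi\imun\cla^2(n-6Q_e)\lambda/3}\,U_{\text{tot}}(\lambda)\,\parfun(SP_n,\vec a,\vec c)\,U_{\text{tot}}(-\lambda).
\]

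The main obstacle is to show that this outer conjugation acts trivially on $\parfun(SP_n,\vec a,\vec c)$, which amounts to the two conservation laws $\bigl(\sum_i\POS_i\bigr)\parfun(SP_n,\vec a,\vec c)=\parfun(SP_n,\vec a,\vec c)\bigl(\sum_i\POS_i\bigr)=0$. I would derive them by first observing that the relation $\PTOLEMY\POS_1=(\POS_1+\POS_2)\PTOLEMY$ of \eqref{eq:def-ptol-1} lifts to the charged setting as $\PTOLEMY_{0i}(a_i,c_i)\POS_0=(\POS_0+\POS_i)\PTOLEMY_{0i}(a_i,c_i)$, because the scalar and exponential dressings in \eqref{eq:charged-T} all commute with $\POS_0$. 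Iterating produces $\bigl(\prod_i\PTOLEMY_{0i}\bigr)\POS_0=\bigl(\POS_0+\sum_i\POS_i\bigr)\prod_i\PTOLEMY_{0i}$ and, symmetrically, $\POS_0\bigl(\prod_i\PTOLEMY_{0i}\bigr)=\bigl(\prod_i\PTOLEMY_{0i}\bigr)\bigl(\POS_0-\sum_i\POS_i\bigr)$. Combining either of these with cyclicity $\tr_0(A\POS_0)=\tr_0(\POS_0 A)$ eliminates the $\POS_0$ term and yields the desired conservation laws; this is equivalently visible in the telescoping $\delta$-function structure of the product kernel $\prod_iK_{\PTOLEMY_{0i}}$, whose $0$-trace pins the total boundary positions on the input and output sides both to zero. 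With these two conservation laws in hand, $U_{\text{tot}}(\lambda)\,\parfun(SP_n)\,U_{\text{tot}}(-\lambda)=\parfun(SP_n)$, and the proposition follows. The whole argument is clean once one accepts that what looks like a nontrivial operator conjugation is actually forced to be the identity by the topological balance of angles around the central edge $e$.
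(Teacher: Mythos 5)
Your first step is exactly the paper's: the identity $\PTOLEMY_{0i}(a_i,c_i+\lambda)=e^{2\pi\imun\cla^2(1-6a_i)\lambda/3}\,e^{4\pi\imun\cla\lambda\POS_i}\PTOLEMY_{0i}(a_i,c_i)e^{-4\pi\imun\cla\lambda\POS_i}$ is precisely the intermediate line of the paper's lemma, and your bookkeeping of the scalar $e^{2\pi\imun\cla^2(n-6Q_e)\lambda/3}$ is correct. The divergence is in how the conjugation is disposed of, and this is where your argument has a genuine gap. The claimed ``symmetric'' identity $\POS_0\bigl(\prod_i\PTOLEMY_{0i}\bigr)=\bigl(\prod_i\PTOLEMY_{0i}\bigr)\bigl(\POS_0-\sum_i\POS_i\bigr)$ is \emph{false} as an operator identity before tracing: subtracting it from the (correct) relation $\bigl(\prod_i\PTOLEMY_{0i}\bigr)\POS_0=\bigl(\POS_0+\sum_i\POS_i\bigr)\prod_i\PTOLEMY_{0i}$ shows it is equivalent to $[\sum_i\POS_i,\prod_i\PTOLEMY_{0i}]=0$, which fails because $[\POS_i,\PTOLEMY_{0i}]\neq 0$ (indeed conjugation by $e^{4\pi\imun\cla\lambda\POS_i}$ shifts the argument of $\cff_{a,c}$ --- that is the whole point of your first step). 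So ``combining either of these with cyclicity'' is circular: relation (1) plus cyclicity of $\tr_0$ gives only the one-sided law $\bigl(\sum_i\POS_i\bigr)\parfun(SP_n)=0$, and that alone does not make the outer conjugation by $U_{\mathrm{tot}}(\lambda)$ trivial, since the right factor $U_{\mathrm{tot}}(-\lambda)$ still acts. The companion law $\parfun(SP_n)\bigl(\sum_i\POS_i\bigr)=0$ is in fact true, but it needs a separate argument: in the kernel of the partial trace, the telescoping delta functions pin $\sum_i u_i=0$ on the output side, and it is the residual $z_0$-integration of the phase $e^{2\pi\imun z_0\sum_i(v_i-u_i)}$ that produces the second constraint $\delta\bigl(\sum_i v_i\bigr)$ on the input side. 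Your parenthetical remark about the delta-function structure gestures at this, but as written the operator-algebraic derivation is the load-bearing step and it does not go through.

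For comparison, the paper avoids this issue entirely: using the fact that the charged operator commutes with $\MOM_1+\POS_2$ (the charged version of \eqref{eq:def-ptol-3}), it rewrites $e^{4\pi\imun\cla\lambda\POS_2}\PTOLEMY(a,c)e^{-4\pi\imun\cla\lambda\POS_2}=e^{-4\pi\imun\cla\lambda\MOM_1}\PTOLEMY(a,c)e^{4\pi\imun\cla\lambda\MOM_1}$, i.e.\ trades your conjugation on the external space $i$ for one by the \emph{same} operator $e^{-4\pi\imun\cla\lambda\MOM_0}$ on the traced space. The conjugations then telescope between adjacent factors of the product and the surviving outer pair is killed by cyclicity of $\tr_0$, with no conservation law needed. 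Your route can be repaired --- either by supplying the kernel computation of the second constraint, or better by borrowing the same $\POS_2\leftrightarrow-\MOM_1$ swap --- but as it stands the key commutation $[\sum_i\POS_i,\parfun(SP_n)]=0$ is asserted via an incorrect identity rather than proved.
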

This easily follows from the following lemma and the cyclic property of the trace.
\begin{lemma}
\[
\PTOLEMY(a,c+\lambda)=e^{-4\pi\imun\cla\lambda\MOM_1}\PTOLEMY(a,c)
e^{4\pi\imun\cla\lambda\MOM_1}e^{2\pi\imun\cla^2(1-6a)\lambda/3}
\]
\end{lemma}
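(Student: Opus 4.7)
\emph{Proof sketch.} The plan is to bypass the full commutation-relation machinery and instead exploit the explicit formula
\[
\PTOLEMY(a,c)=e^{2\pi\imun\MOM_1\POS_2}\,\cff_{a,c}(\POS_1-\POS_2+\MOM_2),
\]
derived immediately after the definition of the charged $T$-operator. Everything reduces to two facts: a scalar rewriting of the function $\cff_{a,c}$ under the shift $c\mapsto c+\lambda$, and the observation that conjugation by $e^{-4\pi\imun\cla\lambda\MOM_1}$ acts as a translation in the $\POS_1$-direction.

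First, inspection of
\[
\cff_{a,c}(x)=\psi(x-2\cla(a+c))\,e^{-4\pi\imun\cla a(x-\cla(a+c))}\,e^{-\pi\imun\cla^2(4(a-c)+1)/6}
\]
shows that substituting $c\mapsto c+\lambda$ is the same as substituting $x\mapsto x-2\cla\lambda$ in the first two factors while picking up prefactors that can all be collected. A direct but straightforward regrouping yields the scalar identity
\[
\cff_{a,c+\lambda}(x)=\cff_{a,c}(x-2\cla\lambda)\,\expo\!\left(\tfrac{2\pi\imun\cla^2(1-6a)\lambda}{3}\right),
\]
where the exponent arises from combining the pure $\lambda$-contribution $2\pi\imun\cla^2\lambda/3$ from the overall prefactor with the $-4\pi\imun\cla^2 a\lambda$ produced by completing the $\psi$-shift. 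This is the only non-trivial bookkeeping step.

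Second, the Heisenberg relation $[\MOM_1,\POS_1]=(2\pi\imun)^{-1}$ gives $e^{-4\pi\imun\cla\lambda\MOM_1}\POS_1 e^{4\pi\imun\cla\lambda\MOM_1}=\POS_1-2\cla\lambda$, while $\MOM_1$ commutes with $\POS_2$, $\MOM_2$ and hence with both $e^{2\pi\imun\MOM_1\POS_2}$ and $\cff_{a,c}(\POS_1-\POS_2+\MOM_2)$'s dependence on $\POS_2,\MOM_2$. Combining these two facts and inserting the scalar identity from the previous step gives
\begin{align*}
\PTOLEMY(a,c+\lambda)
&=e^{2\pi\imun\MOM_1\POS_2}\,\cff_{a,c}(\POS_1-\POS_2+\MOM_2-2\cla\lambda)\,\expo\!\left(\tfrac{2\pi\imun\cla^2(1-6a)\lambda}{3}\right)\\
&=e^{-4\pi\imun\cla\lambda\MOM_1}\,e^{2\pi\imun\MOM_1\POS_2}\,\cff_{a,c}(\POS_1-\POS_2+\MOM_2)\,e^{4\pi\imun\cla\lambda\MOM_1}\,\expo\!\left(\tfrac{2\pi\imun\cla^2(1-6a)\lambda}{3}\right)\\
&=e^{-4\pi\imun\cla\lambda\MOM_1}\,\PTOLEMY(a,c)\,e^{4\pi\imun\cla\lambda\MOM_1}\,\expo\!\left(\tfrac{2\pi\imun\cla^2(1-6a)\lambda}{3}\right),
\end{align*}
which is the claimed identity. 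The only place where something can go wrong is the precise form of the scalar prefactor in the first step, so that is the one point in the computation that warrants care; everything else is essentially formal.
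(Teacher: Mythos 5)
Your proof is correct, and the key scalar identity checks out: from the explicit form of $\cff_{a,c}$ one gets $\cff_{a,c+\lambda}(x)=\cff_{a,c}(x-2\cla\lambda)\,e^{2\pi\imun\cla^2(1-6a)\lambda/3}$, the exponent being $e^{2\pi\imun\cla^2\lambda/3}$ from the $\nu$-type prefactor times $e^{-4\pi\imun\cla^2 a\lambda}$ from the linear exponential, exactly as you state; and since $e^{-4\pi\imun\cla\lambda\MOM_1}(\cdot)e^{4\pi\imun\cla\lambda\MOM_1}$ translates $\POS_1$ by $-2\cla\lambda$ while fixing $\POS_2,\MOM_2$ (hence fixing $e^{2\pi\imun\MOM_1\POS_2}$), the conclusion follows. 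However, your route differs from the paper's. The paper works directly with the defining expression \eqref{eq:charged-T}, $\PTOLEMY(a,c)=\nu$-prefactor$\cdot e^{4\pi\imun\cla(c\POS_2-a\POS_1)}\PTOLEMY e^{-4\pi\imun\cla(a\MOM_2+c\POS_2)}$: it peels off the $\lambda$-dependence as a conjugation by $e^{4\pi\imun\cla\lambda\POS_2}$ (the scalar $e^{2\pi\imun\cla^2(1-6a)\lambda/3}$ arising there from the prefactor together with a Baker--Campbell--Hausdorff commutator between $a\MOM_2$ and $\lambda\POS_2$), and then trades $\POS_2$-conjugation for $\MOM_1$-conjugation using the invariance $\PTOLEMY(a,c)(\MOM_1+\POS_2)=(\MOM_1+\POS_2)\PTOLEMY(a,c)$, i.e.\ relation \eqref{eq:def-ptol-3}. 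Your normal-form argument concentrates all the bookkeeping into a single scalar identity for $\cff_{a,c}$ and needs only the Heisenberg relation, which is arguably more elementary and self-checking; the paper's argument avoids touching $\cff$ altogether and reuses a structural commutation relation that it needs elsewhere anyway. Both are legitimate, and both operate at the same level of formal rigor concerning the unbounded conjugating operators.
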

\begin{proof}
By using formula~\eqref{eq:charged-T}, we have
\begin{multline*}
e^{2\pi\imun\cla^2(6a-1)\lambda/3}\PTOLEMY(a,c+\lambda)\\
=e^{2\pi\imun\cla^2(6a-1)\lambda/3}
e^{-\pi\imun\cla^2(4(a-c-\lambda)+1)/6}
e^{4\pi\imun\cla((c+\lambda)\POS_2-a\POS_1)}\PTOLEMY
e^{-4\pi\imun\cla(a\MOM_2+(c+\lambda)\POS_2)}\\
=e^{4\pi\imun\cla\lambda\POS_2}e^{-\pi\imun\cla^2(4(a-c)+1)/6}
e^{4\pi\imun\cla(c\POS_2-a\POS_1)}\PTOLEMY
e^{-4\pi\imun\cla(a\MOM_2+c\POS_2)}e^{-4\pi\imun\cla\lambda\POS_2}\\
=
e^{4\pi\imun\cla\lambda\POS_2}\PTOLEMY(a,c)e^{-4\pi\imun\cla\lambda\POS_2}=
e^{-4\pi\imun\cla\lambda\MOM_1}\PTOLEMY(a,c)e^{4\pi\imun\cla\lambda\MOM_1}
\end{multline*}
\end{proof}

\section{Geometric constraints on partition functions}\label{GC}
\subsection{Operator-vector correspondence}
Let $I$ and $J$ be two sets and
$$
\mathsf{A}\in\L\left(\S(\REALS^I),\S'(\REALS^J)\right).
$$
Under isomorphism~\eqref{iso}, we have that
\[
\widetilde{\mathsf{A}\POS_i}=\POS_{i}\widetilde{\mathsf{A}},\quad \widetilde{\mathsf{A}\MOM_i}=-\MOM_{i}\widetilde{\mathsf{A}},\quad \forall i\in I.
\]
Indeed,
\[
\langle x\sqcup y\vert\widetilde{\mathsf{A}\POS_i}\rangle=\langle x\vert\mathsf{A}\POS_i\vert y\rangle=y(i)\langle x\vert\mathsf{A}\vert y\rangle=y(i)\langle x\sqcup y\vert\widetilde{\mathsf{A}}\rangle=\langle x\sqcup y\vert\POS_i\widetilde{\mathsf{A}}\rangle,
\]
and
\begin{multline*}
\langle x\sqcup y\vert\widetilde{\mathsf{A}\MOM_i}\rangle=\langle x\vert\mathsf{A}\MOM_i\vert y\rangle=-\frac1{2\pi\imun}\frac{\partial}{\partial y(i)}\langle x\vert\mathsf{A}\vert y\rangle\\
=-\frac1{2\pi\imun}\frac{\partial}{\partial y(i)}\langle x\sqcup y\vert\widetilde{\mathsf{A}}\rangle
=-\langle x\sqcup y\vert\MOM_i\widetilde{\mathsf{A}}\rangle,
\end{multline*}
Similarly to the uncharged $T$-operator $\PTOLEMY$, the charged $T$-operator $\PTOLEMY(a,c)$ satisfies the identities \eqref{eq:def-ptol-1}--\eqref{eq:def-ptol-3}, i.e.
\begin{gather*}
\PTOLEMY(a,c)\POS_1=(\POS_1+\POS_2)\PTOLEMY(a,c),\\
\PTOLEMY(a,c)(\MOM_1+\MOM_2)=\MOM_2\PTOLEMY(a,c),\\
\PTOLEMY(a,c)(\MOM_1+\POS_2)=(\MOM_1+\POS_2)\PTOLEMY(a,c).
\end{gather*}
Through the above identification, the corresponding temperate distribution satisfies the identities
\begin{equation}\label{eq:h1}
(\POS_1+\POS_2-\POS_3)\vert\widetilde{\PTOLEMY(a,c)}\rangle=0,
\end{equation}
\begin{equation}\label{eq:h2}
(\MOM_2+\MOM_3+\MOM_4)\vert\widetilde{\PTOLEMY(a,c)}\rangle=0,
\end{equation}
\begin{equation}\label{eq:h3}
(\MOM_1+\POS_2+\MOM_3-\POS_4)\vert\widetilde{\PTOLEMY(a,c)}\rangle=0.
\end{equation}

\subsection{An operator-valued cohomology class}
Identities~\eqref{eq:h1}--\eqref{eq:h3} give rise to the following geometric properties of partition functions.

Given an oriented triangulated pseudo 3-manifold $X$. Let $\gamma$ be an oriented normal segment (in the sense of normal surface theory) in triangle $t\in\Delta_2(\partial X)$ parallel to oriented side $\partial_j t$. We associate to $\gamma$ an operator  $\widehat{\gamma}\in\End\left(\S'(\REALS^{\Delta_2(\partial X)})\right)$ defined by the formula
\[
\widehat{\gamma}:=\left\{
\begin{array}{cl}
-(-1)^{\sign(t)}\MOM_t&\mathrm{if}\ j=0;\\
\POS_t-(-1)^{\sign(t)}\MOM_t&\mathrm{if}\ j=1;\\
\POS_t&\mathrm{if}\ j=2.
\end{array}\right.
\]
We also define
\[
\widehat{-\gamma}=-\widehat{\gamma},
\]
where $-\gamma$ is $\gamma$ taken with opposite orientation, and
 \[
 \widehat{\gamma_1\sqcup\gamma_2}=\widehat{\gamma_1}+\widehat{\gamma_2}.
 \]
This correspondence permits to define a unique element
\[
\myhom(X)\in H^1\left(\partial X\setminus\Delta_0(\partial X),\End\left(\S'(\REALS^{\Delta_2(\partial X)})\right)\right)
\]
(here $\End\left(\S'(\REALS^{\Delta_2(\partial X)})\right)$ is considered as an additive Abelian group) defined by the formula
\[
\langle\myhom(X),\gamma\rangle=\sum_{T\in\Delta_2(\partial X)} \widehat{\gamma\cap T}
\]
where $\gamma$ is any oriented normal curve in $\partial X\setminus\Delta_0(\partial X)$ representing a class in $H_1\left(\partial X\setminus\Delta_0(\partial X),\INTEGERS\right)$.
\begin{proposition}
The class $\myhom(X)$ is such that
\[
[\langle\myhom(X),\gamma_1\rangle,\langle\myhom(X),\gamma_2\rangle]=
\frac1{\pi\imun}\gamma_1\cdot\gamma_2,
\]
where $\gamma_1\cdot\gamma_2$ is the algebraic intersection index of $\gamma_1$ and $\gamma_2$.
\end{proposition}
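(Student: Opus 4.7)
The plan is to compute the commutator directly from the definitions, reducing it to a local scalar contribution at each transverse intersection of $\gamma_1$ and $\gamma_2$. Using the additivity $\widehat{\gamma\sqcup\gamma'}=\widehat{\gamma}+\widehat{\gamma'}$ built into the definition of $\widehat{\,\cdot\,}$, I first expand
\[
[\langle\myhom(X),\gamma_1\rangle,\langle\myhom(X),\gamma_2\rangle] = \sum_{T,T' \in \Delta_2(\partial X)}\bigl[\widehat{\gamma_1 \cap T},\ \widehat{\gamma_2 \cap T'}\bigr].
\]
Since each $\widehat{\gamma_i \cap T}$ involves only $\POS_T$ and $\MOM_T$, and these commute with $\POS_{T'},\MOM_{T'}$ whenever $T\neq T'$, only the diagonal terms $T=T'$ survive.

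For a fixed triangle $T$, I write $\widehat{\gamma_i\cap T}=a_i\POS_T+b_i\MOM_T$, where the pair $(a_i,b_i)$ belongs to $\{(0,-s),(1,-s),(1,0)\}$ with $s:=(-1)^{\sign(T)}$, according to whether the normal arc is parallel to $\partial_0 T,\partial_1 T$, or $\partial_2 T$. The Heisenberg relation $[\MOM_T,\POS_T]=(2\pi\imun)^{-1}$ then gives
\[
\bigl[\widehat{\gamma_1 \cap T},\ \widehat{\gamma_2 \cap T}\bigr]=\frac{a_2 b_1 - a_1 b_2}{2\pi\imun}=\frac{\mp s}{2\pi\imun}
\]
whenever the two arc types are distinct; a quick check of the $3\times 3$ table shows that in all three unordered cases the nonzero scalar has the same magnitude $|s|/(2\pi\imun)$, reflecting the fact that any two distinct normal-arc classes in $T$ cross once with a sign determined by the orientation of $T$.

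To finish, I place $\gamma_1$ and $\gamma_2$ into the generic position advertised in the proof of Theorem~\ref{Shape}: each transverse intersection sits at the midpoint of an edge shared by two adjacent boundary triangles, as in the figure there. At such an intersection $p$ on an edge shared by $T$ and $T'$, the arcs $\gamma_1\cap T$ and $\gamma_2\cap T$ are normal arcs of two distinct types meeting at $p$, and analogously for $T'$. From the previous step each of the two triangle contributions equals $\pm\frac{1}{2\pi\imun}$, and they share a common sign equal to the local intersection sign of $\gamma_1$ and $\gamma_2$ at $p$ on the oriented surface $\partial X\setminus\Delta_0(\partial X)$. Summing over all intersection points yields
\[
\sum_{T}\bigl[\widehat{\gamma_1 \cap T},\ \widehat{\gamma_2 \cap T}\bigr]=\frac{2\,\gamma_1\cdot\gamma_2}{2\pi\imun}=\frac{\gamma_1\cdot\gamma_2}{\pi\imun},
\]
the factor of $2$ arising from the two adjacent triangles containing each intersection point.

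The main obstacle is the sign bookkeeping at the last step: one must verify that the signs $(-1)^{\sign(T)}$ and $(-1)^{\sign(T')}$ on the two triangles adjacent to an intersection edge conspire with the orientation conventions on the normal arcs so that the two triangle contributions combine constructively rather than cancel. This reduces to a direct case analysis based on the rule $\sign(\partial_i T)=(-1)^i\sign(T)$ together with the edge-orientation convention built into the CW-structure of $\partial X$, and simultaneously confirms that the resulting common sign is the correct algebraic intersection sign on the oriented boundary.
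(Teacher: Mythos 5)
Your proposal is correct and follows essentially the same route as the paper's proof: deform $\gamma_1,\gamma_2$ so that each intersection sits at the midpoint of an edge shared by two boundary triangles, reduce the commutator to diagonal terms via the Heisenberg relations, and observe that the two adjacent triangles each contribute $\pm\frac{1}{2\pi\imun}$ with the sign of the local intersection, giving the factor $\frac{1}{\pi\imun}$. You actually supply more detail than the paper (the explicit $(a_i,b_i)$ table and the origin of the factor $2$); the residual sign bookkeeping you flag is likewise left implicit in the paper's own two-line argument.
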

\begin{proof}
As in the proof of Theorem \ref{Shape}, we deform the curves $\gamma_1$ and $\gamma_2$, so that each intersection point becomes the midpoint of an edge shared by two triangles. Then both curves get contributions from these triangles to the commutation relation between the associated operators, and we get the stated formula.
\end{proof}

Now, relations~\eqref{eq:h1}--\eqref{eq:h3} are interpreted and generalized as the following geometric constraints on partition functions.

\begin{proposition}
Let $X$ be a shaped 3-manifold for which the partition function $\parfun(X)$ is a well defined distribution. Let $L_X$ be the kernel of the group homomorphism
\[
i_*\colon H_1\left(\partial X\setminus\Delta_0(\partial X),\INTEGERS\right)\to
H_1(X\setminus\Delta_0(X),\INTEGERS)
\]
induced by the inclusion map
\[
i\colon \partial X\setminus\Delta_0(\partial X)\hookrightarrow X\setminus\Delta_0(\partial X).
\]
Then,
\begin{equation}\label{eq:ann}
\langle\myhom(X),\gamma\rangle\vert \widetilde{\parfun(X)}\rangle=0,\quad \forall \gamma\in L_X.
\end{equation}
\end{proposition}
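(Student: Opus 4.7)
The plan is a two-step reduction: first handle a single tetrahedron $X=T$, where the identities \eqref{eq:h1}--\eqref{eq:h3} essentially provide everything; then treat a general $X$ by cutting a bounding $2$-chain into normal-disk pieces in each tetrahedron and relying on a sign cancellation on internal faces.

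For $X=T$ a single tetrahedron, $T\setminus\Delta_0(T)$ deformation retracts onto a truncated tetrahedron (topologically a $3$-ball), hence has trivial first homology; so $L_T$ equals all of $H_1(\partial T\setminus\Delta_0(\partial T),\INTEGERS)$, which is generated by the four vertex-link curves modulo the single relation that their sum vanishes. A direct inspection of the definition of $\widehat{\cdot}$ matches the left-hand sides of \eqref{eq:h1}, \eqref{eq:h2}, \eqref{eq:h3} (together with the fourth relation they imply by addition) with the operators $\langle\myhom(T),\gamma_i\rangle$ for the four vertex links $\gamma_i$; since these generate $L_T$, the proposition holds on $T$.

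For general $X$, we represent $\gamma\in L_X$ as $\partial\Sigma$ with $\Sigma\subset X\setminus\Delta_0(X)$ a $2$-chain, and arrange $\Sigma$ via a standard normal-surface perturbation so that $\Sigma\cap T$ is a disjoint union of normal disks in every tetrahedron $T$. Then $\gamma_T:=\partial(\Sigma\cap T)$ is a normal curve in $\partial T\setminus\Delta_0(\partial T)$ bounded by $\Sigma\cap T$ in $T\setminus\Delta_0(T)$, so $\gamma_T\in L_T$. Splitting $\gamma_T=\gamma_T^{\mathrm{ext}}+\gamma_T^{\mathrm{int}}$ according to whether an arc lies on a face of $\partial X$ or on an internal face of the triangulation, the tetrahedral case gives
\[
\widehat{\gamma_T^{\mathrm{ext}}}\vert\widetilde{\parfun(T)}\rangle
=-\widehat{\gamma_T^{\mathrm{int}}}\vert\widetilde{\parfun(T)}\rangle.
\]
Tensoring with $\prod_{T'\ne T}\widetilde{\parfun(T')}$, summing over all $T$, and pushing forward over the internal face variables yields, using $\sum_T\gamma_T^{\mathrm{ext}}=\gamma$,
\[
\langle\myhom(X),\gamma\rangle\vert\widetilde{\parfun(X)}\rangle
=-\int\Bigl(\sum_T\widehat{\gamma_T^{\mathrm{int}}}\Bigr)\prod_{T'}\widetilde{\parfun(T')}\prod_{\text{internal }f}dx_f.
\]

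It remains to see that the right-hand side vanishes, face by face. For each internal face $f$ shared between two tetrahedra $T$ and $T'$, the arcs $\gamma_T^{\mathrm{int}}\cap f$ and $\gamma_{T'}^{\mathrm{int}}\cap f$ coincide as unoriented normal arcs parallel to the same edge $\partial_j f$ (since the gluing preserves vertex orders), but carry opposite boundary orientations, while $\sign_T(f)=-\sign_{T'}(f)$. A case-by-case check in the three cases $j=0,1,2$ of the definition of $\widehat{\cdot}$ shows that the $\POS_f$-contributions from $T$ and $T'$ cancel exactly, leaving at most a multiple of $\MOM_f$ (and $0$ when $j=2$); the pushforward $\int dx_f$ then annihilates this remainder by distributional integration by parts, justified by the standing hypothesis that $\parfun(X)$ is a well-defined temperate distribution. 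The main obstacle is this sign combinatorics: one has to verify that the orientation reversal of the arc on the two sides of $f$ combines with the sign flip $\sign_T(f)=-\sign_{T'}(f)$ in just the right way to cancel the $\POS_f$-component, leaving only the $\MOM_f$-component, which is killed by the subsequent pushforward.
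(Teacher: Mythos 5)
Your argument is correct and follows essentially the same route as the paper: the base case is the tetrahedral annihilation identities \eqref{eq:h1}--\eqref{eq:h3}, and the general case reduces to a cancellation on glued faces in which the position contributions cancel on the diagonal and the remaining momentum (derivative) contributions are annihilated by the pushforward over the internal face variable. The paper merely organizes the second step as an induction on elementary gluings, via the identity $\cont_{t_1,t_2}\widehat{\gamma_1}=\cont_{t_1,t_2}\widehat{\gamma_2}$ for arcs identified under the gluing, rather than your single global normal-surface decomposition, but the key computations are identical.
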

\begin{proof}
It is a straightforward to check that the statement is true for the tetrahedral partition functions $\PTOLEMY(a,c)$ and $\bar\PTOLEMY(a,c)$.
Assume that $Y$ satisfies the statement and that  $X$ is obtained from $Y$ by an \emph{elementary gluing} corresponding to identification of two triangles of opposite signs $t_1,t_2\in\Delta_2(\partial Y)$.
Denoting
\[
V(*):=\S'(\REALS^{\Delta_2(*)}),
\]
we have a linear map
\[
\cont_{t_1,t_2}\colon V(\partial Y)\to V(\partial X)
\]
defined by the formula
\[
\langle x\vert\cont_{t_1,t_2}\vert f\rangle=\int_{\REALS}ds\, \langle y_{x,s}\vert f\rangle
\]
where
\[
y_{x,s}(t)=\left\{
\begin{array}{cl}
s,&\mathrm{if}\ t=t_i,\ i\in\{1,2\};\\
x(t),&\mathrm{otherwise}.
\end{array}\right.
\]
With this map, we have that
\[
\vert\Psi(X)\rangle=\cont_{t_1,t_2}\vert\Psi(Y)\rangle,\quad \Psi:=\widetilde{\parfun}.
\]
If $\gamma_1$ and $\gamma_2$ are oriented normal arcs in $t_1$ and $t_2$ respectively which are identified upon gluing, then it is easily verified that
\[
\cont_{t_1,t_2}\widehat{\gamma_1}=\cont_{t_1,t_2}\widehat{\gamma_2}.
\]
Now, let $D$ be a normal surface in $X$ with $\partial D\subset\partial X$. The pre-image of $D$ in $Y$ is a finite system of normal surfaces $D_1,\ldots,D_n$, and the corresponding system of normal curves $\gamma:=\cup_{i=1}^n\partial D_i$ in $\partial Y$ is such that two sets of oriented normal arcs in the triangles $t_1$ and $t_2$,
\[
\bigcup_{j=1}^m\gamma_{i,j}=\gamma\cap t_i,\quad i\in\{1,2\},
\]
are in bijection induced by the identification of $t_1$ and $t_2$ so that
\[
\cont_{t_1,t_2}\widehat{\gamma_{1,j}}=\cont_{t_1,t_2}\widehat{\gamma_{2,j}},\quad j\in\{1,\ldots,m\}.
\]
Thus, we have the following operator identity
\[
\langle\myhom(X),\partial D\rangle\cont_{t_1,t_2}=\sum_{i=1}^n\cont_{t_1,t_2}\langle\myhom(Y),\partial D_i\rangle
\]
which permits to conclude that $\langle\myhom(X),\partial D\rangle\vert\Psi(X)\rangle=0$ as soon as $Y$ verifies equations~\eqref{eq:ann}.

To finish the proof, we remark that for any shaped 3-manifold $X$, there exists a finite sequence of shaped 3-manifolds
\[
X'=X_0,X_1,\ldots,X_n=X,
\]
where for any $i\in\{1,\ldots,n\}$, $X_i$ is obtained from $X_{i-1}$ by an elementary gluing.
\end{proof}

\section{Convergence of partition functions}\label{Conv}
\begin{theorem}\label{convergence}
For any  admissible  shaped 3-manifold $X$, the partition function $\parfun(X)$ is a well defined temperate distribution. In particular, if $\partial X=\varnothing$, then  $\parfun(X)\in\COMPLEXS$.
\end{theorem}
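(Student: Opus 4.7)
The plan is to analyze $\parfun(X)$ directly as a single finite-dimensional integral assembled from the tetrahedral kernels. Writing $X$ as the quotient of $\sqcup_T T$ by its face identifications, one has
\[
\vert\parfun(X)\rangle=\Big(\prod_{\{t_1,t_2\}}\cont_{t_1,t_2}\Big)\vert{\textstyle \bigotimes_{T\in\Delta_3(X)}}\parfun(T)\rangle,
\]
where each $\parfun(T)$ is given explicitly by~\eqref{tet-int-f}. Positivity of the dihedral angles guarantees that each shift $(1-\alpha_i)/(2\imun\sqrt{\hbar})$ lies in the open strip where $1/\QDILOG$ is smooth with polynomial growth, so each $\parfun(T)$ is a well-defined temperate distribution and so is the tensor product on the full space of face variables. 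The task reduces to showing that integrating over all identified face variables produces an element of $\S'(\bR^{\Delta_2(\partial X)})$.

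The first step is to exploit the delta factors of the positive tetrahedra to eliminate one variable per positive tetrahedron. After this reduction the partition function takes the schematic form
\[
\parfun(X)(x) = \int_{\bR^N} e^{\imun\pi Q(s,x)}\prod_{T\in\Delta_3(X)}\QDILOG^{-\sign(T)}\!\bigl(L_T(s,x)+\imun c_T\bigr)\,ds,
\]
where $s\in\bR^N$ are the residual internal variables, $x$ the boundary variables, $Q$ is a real quadratic form, each $L_T$ is a real linear form, and the shifts $c_T=(1-\alpha_T)/(2\sqrt{\hbar})$ are strictly positive. Before handling the $s$-integration I would verify the wave front transversality~\eqref{wftrans} at every contraction: each $\QDILOG^{\pm1}$-factor has wave front contained in the conormal to the hyperplane $L_T=0$, and the strictly positive imaginary shifts pull each argument off this hyperplane, so no singular covector lies in the conormal of any face-gluing diagonal.

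The main obstacle is the analytic step: showing that the $s$-integration converges as a temperate distribution in $x$. For this I would use the standard asymptotics of Faddeev's quantum dilogarithm, namely $|\QDILOG(z+\imun c)|\to 1$ as $\re z\to-\infty$ and $|\QDILOG(z+\imun c)|\sim\exp(\pi(\la+\la^{-1})c\,\re z)$ as $\re z\to+\infty$ for $c>0$. Combined with the signs $-\sign(T)$, the modulus of the integrand is then bounded by $\exp(-\pi\Lambda(s,x))$ for an explicit real-valued function $\Lambda$ whose leading homogeneous part is a quadratic form $\Lambda_0$ built from the shifts $c_T$ and the gluing combinatorics. The admissibility hypothesis $H_2(X-\Delta_0(X),\bZ)=0$ is precisely what is needed to conclude that $\Lambda_0$ is positive-definite in the integration directions: via the operator-valued cohomology class $\myhom(X)$ of Section~\ref{GC} and the Neumann--Zagier symplectic description of Section~\ref{reduction}, any null direction of $\Lambda_0$ would correspond to a nonzero class in $H_2(X-\Delta_0(X),\bZ)$, which admissibility excludes. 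Once positive-definiteness is in hand, absolute convergence of the $s$-integral and standard Gaussian estimates yield continuity of $f\mapsto\int\parfun(X)(x)f(x)\,dx$ on $\S(\bR^{\Delta_2(\partial X)})$, giving $\parfun(X)\in\S'(\bR^{\Delta_2(\partial X)})$, which collapses to $\bC$ when $\partial X=\varnothing$.
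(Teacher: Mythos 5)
Your overall plan (assemble $\parfun(X)$ as one big integral, check wave--front transversality, then prove absolute convergence from a positive--definite decay form) has two concrete failures, and both occur at exactly the points where the paper has to work hardest.

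First, the wave--front verification is wrong. The singular support of each tetrahedral kernel~\eqref{tet-int-f} comes entirely from the factor $\delta(x_0+x_2-x_1)$; the factor $1/\QDILOG\bigl(x_3-x_2+\frac{1-\alpha_1}{2\imun\sqrt{\hbar}}\bigr)$ is a smooth bounded function of the real face variables and contributes nothing to $\wf(\parfun(T))$, so the ``positive imaginary shifts'' are irrelevant to transversality. The actual danger is that after identifying face variables the conormals of the hyperplanes $x_0+x_2-x_1=0$ coming from different tetrahedra become linearly dependent, so that some integer combination of the delta--constraints degenerates to $\delta(0)$. Ruling this out is precisely where admissibility enters: in the paper's proof a linear relation among these constraints is converted, via the classes $\langle\myhom(X),\gamma_i\rangle$ and the bounding surfaces $D_i$, into a nontrivial class in $H_2(Y\setminus\Delta_0(Y),\bZ)$, contradicting $H_2=0$. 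Your proposal instead spends admissibility on a positive--definiteness claim for $\Lambda_0$ that is asserted but never argued, and leaves the genuine transversality question unaddressed.

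Second, the absolute--convergence claim is false in general. Each tetrahedron supplies exponential decay in only one direction (the argument $x_3-x_2$ of $\QDILOG$); in all complementary directions the kernel has constant modulus, since $e^{2\pi\imun(x_3-x_2)x_0}$ and the Gaussian phases are purely oscillatory. These decay directions need not span the integration space: already for one tetrahedron with a single face self--identification (Section~\ref{ex}) the internal integral is $\int_{\REALS}e^{2\pi\imun z(y-x)}\,dz=\delta(y-x)$, whose integrand has modulus one, so no estimate of the form $|{\rm integrand}|\le e^{-\pi\Lambda(s,x)}$ with $\Lambda_0$ positive--definite can hold. This is why the paper does not attempt a single absolutely convergent integral: it proceeds inductively over elementary gluings, passes to a mixed position--momentum polarization in which the operators $\mathsf{a}_i=\langle\myhom(X),\gamma_i\rangle$ act diagonally so that $\langle\xi,\eta\vert\Psi(X)\rangle=\psi_X(\eta)\prod_i\delta(\xi_i)$ with $\psi_X$ exponentially decaying along its support, and then pushes forward using Proposition~\ref{pf}. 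To repair your argument you would need both the homological exclusion of degenerate delta--collisions and a replacement for the absolute--convergence step, at which point you have essentially reconstructed the paper's proof.
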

\begin{proof}Let $X$ be a shaped 3-manifold such that $\Psi(X)$ is the distribution as above, which satisfies equations~\eqref{eq:ann}. Let $\{\gamma_i\}_{i\in I}$ be a system of disjoint simple closed curves in $\partial X$ corresponding to a linear basis of $L_{X}$. We denote
\[
\mathsf{a}_i:=\langle\myhom(X),\gamma_i\rangle,\quad i\in I,
\]
and we choose a maximal set of operators $\mathsf{b}_j$, $j\in J$, which are linear combinations of Heisenberg operators $\MOM_t,\POS_t$, $t\in\Delta_2(\partial X)$, such that the elements of the set $\{\mathsf{a}_i,\mathsf{b}_j\vert\ i\in I,j\in J\}$ are linearly independent and mutually commuting. Classically this set of operators correspond to a new real linear polarization of the cotangent bundle to the space $\bR^{\Delta_2(\partial X)}$.
Let $\vert\xi,\eta\rangle$, $\xi\in\REALS^{I},\eta\in\REALS^J$, be a normalized complete basis in $V(\partial X)$ where the operators $\mathsf{a}_i,\mathsf{b}_j$ act diagonally
\[
\mathsf{a}_i\vert\xi,\eta\rangle=\vert\xi,\eta\rangle\xi_i,\ i\in I,\quad \mathsf{b}_j\vert\xi,\eta\rangle=\vert\xi,\eta\rangle\eta_j,\ j\in J.
\]
Here $\vert\xi,\eta\rangle$ should be thought of
as the operator, which induces the isomorphism between the quantization in the original position coordinates introduced in Section \ref{qtt} and this new real linear polarization. This operator of course induces an isomorphism of the relevant Schwartz spaces.
Let $\langle\xi,\eta\vert\Psi(X)\rangle$ denote the unique distribution which corresponds to $\vert\Psi(X)\rangle$ under this isomorphism, i.e.
\[
\vert\Psi(X)\rangle=\int_{\REALS^{I\sqcup J}}d\xi d\eta\vert\xi,\eta\rangle\langle\xi,\eta\vert\Psi(X)\rangle.
\]
Equations~\eqref{eq:ann}  now imply that this new presentation of our partition function is of the form
\begin{equation}\label{nf}
\langle\xi,\eta\vert\Psi(X)\rangle=\psi_X(\eta)\prod_{i\in I}\delta(\xi_i),
\end{equation}
where $\psi_X\in\S'(\REALS^J)$. When $X$ is a disjoint union of finitely many shaped tetrahedra, the function $\psi_X$ is smooth and exponentially decaying at infinity.
Let a shaped 3-manifold $Y$ be obtained from $X$ by gluing along a set of triangles in $\partial X$, where $X$ is such that its partition function is a distribution of the form (\ref{nf}), and the function $\psi_X$ is smooth and exponentially decaying at infinity.
Let now $\mathsf{G}_Y$ be the distribution which implements the gluing on the boundary of $X$ to get $Y$ from $X$ in the quantization with respect to the new polarization, i.e. it is the conjugate under  $\vert\xi,\eta\rangle$ of the original gluing distribution.
Then the partition function of $Y$ is given by
\begin{equation}\label{y-from-x}
\vert\Psi(Y)\rangle=\int_{\REALS^{I\sqcup J}}d\xi d\eta\,\mathsf{G}_Y\vert\xi,\eta\rangle\langle\xi,\eta\vert\Psi(X)\rangle.
\end{equation}
The only case when the integrand, i.e. the product of these two distributions, is ill defined, is if their Wave front sets violate the condition of Theorem \ref{mult}. Using the fact that $\mathsf{G}_Y$ is the distribution which implements the gluing, and the linearity of the change of polarizations, we see that this is the case if and only if the temperate distribution $\mathsf{G}_Y$ has the form
\begin{equation}\label{forbid}
\mathsf{G}_Y\vert\xi,\eta\rangle=\vert\chi(\xi,\eta)\rangle\delta\left(\sum_{i\in I}n_i\xi_i\right)
\end{equation}
where $n_i\in\INTEGERS$, $i\in I$ and $\vert\chi(\xi,\eta)\rangle$ is some other distribution which can be multiplied with this delta-function. Let us see that in the case of admissible $Y$ such a form of $\mathsf{G}_Y$ is impossible. Indeed, equation~\eqref{forbid} means that
\[
\sum_{i\in I}n_i\mathsf{G}_Y\mathsf{a}_i=0.
\]
Thinking of $\mathsf{G}_Y$ as a partition function of a shaped 3-manifold $M$, we conclude
that the homology class of $\sum_{i\in I}n_i\gamma_i$ is trivial in $H_1(M\setminus\Delta_0(M),\bZ)$, i.e there exists  a 2-chain $C$ in $M$ such that
\[
\partial C=\sum_{i\in I}n_i\gamma_i.
\]
On the other hand, as $\gamma_i=\partial D_i$ in $X$, we come to the conclusion that
\[
C-\sum_{i\in I}n_iD_i
\]
is a nontrivial 2-cycle in $Y\setminus\Delta_0(Y)$ coming to contradiction with admissibility of $Y$. Thus, in the case when $H_2(Y\setminus\Delta_0(M),\bZ)=0$, the
integrand in \eqref{y-from-x} is a well defined temperate distribution. Furthermore, due to the decay properties of $\psi_X$, this integrand has also the necessary extension properties to be pushed forward by Proposition~\ref{pf} to a temperate distribution. The partition function of $Y$ is thus a well defined temperate distribution.
\end{proof}

\subsection{Proof of Theorem~\ref{Main}}
We define the value of the functor  on leveled shaped tetrahedra by formulae~\eqref{fh} and \eqref{tet-int-f}, or equivalently by \eqref{tet-fun}.
The value of $F_\hbar$ on any admissible leveled shaped triangulated pseudo 3-manifold $(X,\ell_X)$ is calculated by composing the morphisms associated with constituent tetrahedra. Theorem~\ref{convergence} implies that the result of composition is well defined, while the symmetry properties given by Fundamental Lemma~\ref{fund-lem} and explained in Section~\ref{tqft-rules} ensure that the obtained morphism is independent of the choice of vertex ordering of tetrahedra. Furthermore, Propositions~\ref{gauge-trans} and \ref{charged-pentagon} imply that the morphism is gauge invariant and does not change under Pachner refinements, in other words, $F_\hbar(X,\ell_X)$ depends on only the equivalence class of $(X,\ell_X)$. Finally, $F_\hbar$ obviously respects the composition, because a composed morphism eventually again is reduced to composition of its constituent tetrahedra.

\section{Examples of calculation}\label{ex}
In the following examples we encode an oriented triangulated pseudo 3-manifold $X$ into a diagram where a tetrahedron $T$ is represented by an element
 \begin{center}
 \begin{tikzpicture}[scale=.3]
 \draw[very thick] (0,0)--(3,0);
 \draw (0,0)--(0,1);\draw (1,0)--(1,1);\draw (2,0)--(2,1);\draw (3,0)--(3,1);
 \end{tikzpicture}
 \end{center}
 where the vertical segments, ordered from left to right, correspond to the faces $\partial_0T,\partial_1T,\partial_2T,\partial_3T$ respectively. When we glue tetrahedron along faces, we illustrate this by joining the corresponding vertical segments.

\subsection{The complement of the trefoil knot}
Let $X$ be represented by the diagram
\begin{equation}\label{P:trefoil}
\begin{tikzpicture}[baseline=5pt,scale=.3]
\draw[very thick] (0,0)--(3,0);\draw[very thick] (0,1)--(3,1);
\draw (0,0)--(0,1);
\draw (1,0)--(1,1);
\draw (2,0)--(2,1);
\draw (3,0)--(3,1);
\end{tikzpicture}
\end{equation}
Thus, choosing an appropriate orientation, it consists of two positive tetrahedra
$T_1$ and $T_2$ with four identifications
\[
\partial_iT_1\simeq\partial_{3-i}T_2,\quad i\in\{0,1,2,3\},
\]
so that $\partial X=\varnothing$. Hence, combinatorially, we have $\Delta_0(X)=\{*\}$, $\Delta_1(X)=\{e_0,e_1\}$, $\Delta_2(X)=\{f_0,f_1,f_2,f_3\}$, and $\Delta_3(X)=\{T_1,T_2\}$ with the boundary maps
\[
f_i=\partial_iT_1=\partial_{3-i}T_2,\quad i\in\{0,1,2,3\},
\]
\[
\partial_if_j=\left\{\begin{array}{cl}
e_0,&\mathrm{if}\ i=1\ \mathrm{and}\ j\in\{1,2\};\\
e_1,&\mathrm{otherwise},
\end{array}
\right.
\]
\[
\partial_i e_j=*,\quad i,j\in\{0,1\}.
\]
The topological space $X\setminus\{*\}$ is homeomorphic to the complement of the trefoil knot. The set $\Delta_{3,1}(X)$ consists of the elements $(T_i, e_{j,k})$ for $i\in\{1,2\}$ and $0\le j<k\le 3$. We fix a shape structure
\[
\alpha_X\colon \Delta_{3,1}(X)\to \REALS_{>0}
\]
by the formulae
\[
\alpha_X(T_i, e_{0,1})=2\pi a_i,\quad \alpha_X(T_i, e_{0,2})=2\pi b_i,\quad\alpha_X(T_i, e_{0,3})=2\pi c_i,\quad i\in\{1,2\},
\]
where $a_i+b_i+c_i=\frac12$. The weight function
\[
\omega_X\colon \Delta_1(X)\to\REALS_{>0}
\]
 takes the values
\[
\omega_X(e_0)=2\pi(c_1+c_2)=:2\pi w,\quad \omega_X(e_1)=2\pi(2-c_1-c_2)=2\pi(2-w).
\]
As the trefoil knot is not hyperbolic, the completely balanced case $w=1$ is not accessible directly, but it can be approached arbitrarily closely in the limit
\[
c_i\to 1/2,\ a_i\to 0,\quad i\in\{1,2\}.
\]
Denoting
\[
\nu_{x,y}:=e^{\imun4\pi\cla^2 x(x+y)}\nu(x-y),\quad \nu(x):=e^{-\imun\pi\cla^2(4x+1)/6},
\]
we calculate the partition function
\begin{multline*}
\parfun(X)=
\int_{\REALS^4}dx_0dx_1dx_2dx_3\,\langle x_0,x_2\vert\PTOLEMY(a_1,c_1)\vert x_1,x_3\rangle
\langle x_3,x_1\vert\PTOLEMY(a_2,c_2)\vert x_2,x_0\rangle\\
=e^{-\imun\pi/6}\int_{\REALS^4}dx_0dx_1dx_2dx_3\,\delta(x_0+x_2-x_1)\delta(x_3+x_1-x_2)\\
\times
\cff_{c_1,b_1}(x_3-x_2)\cff_{c_2,b_2}(x_0-x_1)e^{\imun2\pi( x_0(x_3-x_2)+x_3(x_0-x_1)}\\
=e^{-\imun\pi/6}\int_{\REALS^2}dxdy\,\cff_{c_1,b_1}(x)\cff_{c_2,b_2}(x+y)
e^{-\imun2\pi y^2}\\
=e^{-\imun\pi/6}\nu_{c_1,b_1}\nu_{c_2,b_2}
\int_{\REALS^2}dxdy\,\frac{e^{-\imun4\pi\cla(x w+yc_2)-\imun2\pi y^2}}{\QDILOG(x+2\cla a_1-\cla)\QDILOG(x+y+2\cla a_2-\cla)}\\
=e^{-\imun\pi/6}\nu_{c_1,b_1}\nu_{c_2,b_2}
\int_{\REALS^2}dxdy\,\frac{e^{-\imun4\pi\cla((x-2\cla a_1) w+yc_2)-\imun2\pi y^2}}{\QDILOG(x-\cla)\QDILOG(x+y+2\cla (a_2-a_1)-\cla)}\\
=e^{\imun8\pi\cla^2wa_1-\imun\pi/6}\nu_{c_1,b_1}\nu_{c_2,b_2}\\
\times
\int_{\REALS}dy\,e^{-\imun4\pi\cla yc_2-\imun2\pi y^2}\IHG{2}(-\infty,-\infty;y+2\cla (a_2-a_1);\cla(1-2w))
\end{multline*}
In the fully balanced limit $w\to1$, the last integrand can be calculated explicitly by the integral analogue of Saalsch\"utz summation formula
\[
\IHG{2}(-\infty,-\infty;d;-\cla)=\zeta_o^3
e^{\IMUN\pi d(2\cla-d)},
\]
so that we obtain
\[
\lim_{w\to1}\parfun(X)=e^{-\imun\pi/6}\nu_{1/2,0}^2\zeta_o^3\int_{\REALS}dye^{-\imun3\pi y^2}=\frac{e^{-\imun\pi/6}}{\sqrt{3}}.
\]
It is interesting to note that the result is independent of the quantization parameter $\hbar$.
\subsection{One vertex H-triangulation of $(S^3,3_1)$}
Let $X$ be represented by the diagram
 \[
 \begin{tikzpicture}[scale=.3]
 \draw[very thick] (0,0)--(3,0);
 \draw(1,0)..controls (1,1/2) and (2,1/2) ..(2,0);
 \draw(0,0)..controls (0,1) and (3,1) ..(3,0);
 \end{tikzpicture}
\]
where the trefoil knot is represented by the edge connecting the minimal and the maximal vertices of the tetrahedron.
Choosing an orientation, it consists of one positive tetrahedron
 $T$ with two identifications
\[
\partial_{i}T\simeq\partial_{3-i}T,\quad i\in\{0,1\},
\]
so that $\partial X=\varnothing$, and as a topological space, $X$ is homeomorphic to 3-sphere. Combinatorially, we have $\Delta_0(X)=\{*\}$, $\Delta_1(X)=\{e_0,e_1\}$, $\Delta_2(X)=\{f_0,f_1\}$, and $\Delta_3(X)=\{T\}$ with the boundary maps
\[
f_{i}=\partial_{i}T=\partial_{3-i}T,\quad i\in\{0,1\},
\]
\[
\partial_if_j=\left\{\begin{array}{cl}
e_0,&\mathrm{if}\ i=j=1;\\
e_1,&\mathrm{otherwise},
\end{array}
\right.
\]
\[
\partial_i e_j=*,\quad i,j\in\{0,1\}.
\]
 The set $\Delta_{3,1}(X)$ consists of elements $(T, e_{j,k})$ for $0\le j<k\le 3$. We fix a shape structure
\[
\alpha_X\colon \Delta_{3,1}(X)\to \REALS_{>0}
\]
by the formulae
\[
\alpha_X(T, e_{0,1})=2\pi a,\quad \alpha_X(T, e_{0,2})=2\pi b,\quad\alpha_X(T, e_{0,3})=2\pi c,
\]
where $a+b+c=\frac12$. The weight function
\[
\omega_X\colon \Delta_1(X)\to\REALS_{>0}
\]
 takes the values
\[
\omega_X(e_0)=2\pi c,\quad \omega_X(e_1)=2\pi(1-c).
\]
Geometrically, the most interesting case corresponds to balanced edge $e_1$ with $c=0$, since $e_0$ is knotted as the trefoil knot. However, this point is not directly accessible but it can be approached arbitrarily closely.
We calculate the partition function
\begin{multline*}
\parfun(X)=
\int_{\REALS^2}dx_0dx_1\,\langle x_0,x_1\vert\PTOLEMY(a,c)\vert x_1,x_0\rangle\\
=e^{-\frac{\imun\pi}{12}}\int_{\REALS^2}dx_0dx_1\,\delta(x_0)
\cff_{c,b}(x_0-x_1)e^{\imun2\pi x_0(x_0-x_1)}\\
=e^{-\frac{\imun\pi}{12}}\int_{\REALS}dx\,
\cff_{c,b}(x)=e^{-\frac{\imun\pi}{6}}
\cff_{b,a}(0)=\frac{e^{-\frac{\imun\pi}{6}}\nu_{b,a}}{\QDILOG(2\cla c-\cla)}.
\end{multline*}
Note, that the point $c=0$ is a simple pole. The renormalized partition function
\[
\tilde\parfun(X):=\lim_{c\to0} \QDILOG(2\cla c-\cla)\parfun(X)=\nu_{b,a}e^{-\frac{\imun\pi}{6}}=\frac{1}{\nu(b)}e^{-\frac{\imun\pi}{6}}
\]
is considered as a  non-compact analogue of the quantum dilogarithmic invariant of \cite{K4}.

\subsection{One tetrahedron with one face identification}
Let $X$ consists of one tetrahedron with one face identification
\[
 \begin{tikzpicture}[scale=.3]
 \draw[very thick] (0,0)--(3,0);
 \draw(2,0)--(2,1);
 \draw(3,0)--(3,1);
 \draw(0,0)..controls (0,1) and (1,1) ..(1,0);
 \end{tikzpicture}
\]
It has the only interior edge given by the edge connecting the maximal and the next to maximal vertices of the tetrahedron. This partition function can be used as the canonical part of a one vertex H-triangulation for a pair $(M,K)$ consisting  a 3-manifold $M$ and a knot $K$ in $M$, the only interior edge representing the knot.

Choosing the positive orientation, the corresponding operator is calculated as follows:
\begin{multline*}
\langle x\vert \parfun(X)\vert y\rangle=\int dz\,\langle z,x\vert\PTOLEMY(a,c)\vert z,y\rangle=
\delta(x)\tilde\cff'_{a,c}(y-x)\int_{\REALS}dz\,e^{\imun2\pi z(y-x)}\\
=\delta(x)\delta(y)\tilde\cff'_{a,c}(0)
=\delta(x)\delta(y)\frac{e^{-\imun\pi/12}\nu_{c,b}}{\QDILOG(2\cla a-\cla)}.
\end{multline*}
Here, the weight of the interior edge is  $2\pi a$.

\subsection{The complement of the figure--eight knot}
Let $X$ be represented by the diagram
\begin{equation}\label{E:8graph}
\begin{tikzpicture}[baseline=5pt,scale=.3]
\draw[very thick] (0,0)--(3,0);\draw[very thick] (0,1)--(3,1);
\draw(0,0)--(1,1);
\draw (1,0)--(0,1);
\draw (2,0)--(3,1);
\draw (3,0)--(2,1);
\end{tikzpicture}
\end{equation}
Again, choosing an orientation,
it consists of one positive tetrahedron $T_+$ and one negative tetrahedron $T_-$ with four identifications
\[
\partial_{2i+j}T_+\simeq\partial_{2-2i+j}T_-,\quad i,j\in\{0,1\},
\]
so that $\partial X=\varnothing$. Combinatorially, we have $\Delta_0(X)=\{*\}$, $\Delta_1(X)=\{e_0,e_1\}$, $\Delta_2(X)=\{f_0,f_1,f_2,f_3\}$, and $\Delta_3(X)=\{T_+,T_-\}$ with the boundary maps
\[
f_{2i+j}=\partial_{2i+j}T_+=\partial_{2-2i+j}T_-,\quad i,j\in\{0,1\},
\]
\[
\partial_if_j=\left\{\begin{array}{cl}
e_0,&\mathrm{if}\ j-i\in\{0,1\};\\
e_1,&\mathrm{otherwise},
\end{array}
\right.
\]
\[
\partial_i e_j=*,\quad i,j\in\{0,1\}.
\]
The topological space $X\setminus\{*\}$ is homeomorphic to the complement of the figure--eight knot. The set $\Delta_{3,1}(X)$ consists of elements $(T_\pm, e_{j,k})$ for $0\le j<k\le 3$. We fix a shape structure
\[
\alpha_X\colon \Delta_{3,1}(X)\to \REALS_{>0}
\]
by the formulae
\[
\alpha_X(T_\pm, e_{0,1})=2\pi a_\pm,\quad \alpha_X(T_\pm, e_{0,2})=2\pi b_\pm,\quad\alpha_X(T_\pm, e_{0,3})=2\pi c_\pm,
\]
where $a_\pm+b_\pm+c_\pm=\frac12$. The weight function
\[
\omega_X\colon \Delta_1(X)\to\REALS_{>0}
\]
 takes the values
\[
\omega_X(e_0)=2\pi(2a_++c_++2b_-+c_-)=:2\pi w,\quad \omega_X(e_1)=2\pi(2-w).
\]
As the figure--eight knot is hyperbolic, the completely balanced case $w=1$ is accessible directly, the complete hyperbolic structure corresponding to the symmetric point
\[
a_\pm=b_\pm=c_\pm=\frac16.
\]
We calculate the partition function
\begin{multline*}
\parfun(X)=
\int_{\REALS^4}dx_0dx_1dx_2dx_3\,\langle x_0,x_2\vert\PTOLEMY(a_+,c_+)\vert x_1,x_3\rangle
\overline{\langle x_2,x_0\vert\PTOLEMY(a_-,c_-)\vert x_3,x_1\rangle}\\
=\int_{\REALS^4}dx_0dx_1dx_2dx_3\,\delta(x_0+x_2-x_1)\delta(x_2+x_0-x_3)\\
\times
\cff_{c_+,b_+}(x_3-x_2)\overline{\cff_{c_-,b_-}(x_1-x_0)}e^{\imun2\pi( x_0(x_3-x_2)-x_2(x_1-x_0))}\\
=\int_{\REALS^2}dxdy\,\cff_{c_+,b_+}(x)\overline{\cff_{c_-,b_-}(y)}
e^{\imun2\pi (x^2-y^2)}
=\varphi_{c_+,b_+}\overline{\varphi_{c_-,b_-}},
\end{multline*}
where
\[
\varphi_{c,b}:=\int_{\REALS}dz\,\cff_{c,b}(z)e^{\imun2\pi z^2}
=\mu_{c,b}\varphi(2b+c),
\]
\[
\mu_{c,b}:=\nu_{c,b}e^{\imun 8\pi\cla^2b(b+c)},
\]
\[
\varphi(x):=\int_{\REALS-\imun d}dz\, \frac{e^{\imun 2\pi z^2+\imun 4\pi\cla zx}}{\QDILOG(z)}
\]
where $d\in\bR$ is chosen so that the integral converges absolutely.
The condition $w=1$ corresponds to fully balanced case which implies that
\[
\lambda:= 2b_++c_+=2b_-+c_-
\]
so that in this case the partition function takes the form
\[
\frac{\mu_{c_-,b_-}}{\mu_{c_+,b_+}}\parfun(X)=|\varphi(\lambda)|^2=
\int_{\REALS+\imun0}dx\,\chi_{4_1}(x,\lambda),
\]
where
\begin{equation}\label{eq:chi}
\chi_{4_1}(x,\lambda):=\chi_{4_1}(x)e^{\imun4\pi\cla x\lambda},\quad \chi_{4_1}(x):=\int_{\REALS-\imun0}dy\,\frac{\QDILOG(x-y)}{\QDILOG(y)}e^{\imun2\pi x(2y-x)}.
\end{equation}
It would be interesting to tie these computations up with the refined asymptotics given in Theorem 1 of \cite{AH}.

\subsection{One-vertex H-triangulation of $(S^3,4_1)$}
Let $X$ be given by the diagram
\[
 \begin{tikzpicture}[scale=.3]
 \draw[very thick] (0,0)--(0,3);
 \draw[very thick] (1,3/2)--(4,3/2);
 \draw[very thick] (5,0)--(5,3);
 \draw(1,3/2)..controls (1,2) and (2,2)..(2,3/2);
 \draw(3,3/2)..controls (3,2) and (3/2,5/2)..(0,2);
 \draw(4,3/2)..controls (4,2) and (4.5,3)..(5,3);
 \draw(0,3)..controls (1/2,3) and (4.5,2)..(5,2);
 \draw(0,1)..controls (1/2,1) and (4.5,0)..(5,0);
 \draw(0,0)..controls (1/2,0) and (4.5,1)..(5,1);
 \end{tikzpicture}
\]
where the figure-eight knot is represented by the edge of the central tetrahedron connecting the maximal and the next to maximal vertices.
Choosing positive central tetrahedron, the left tetrahedron will be positive and the right one negative. For technical reasons, we impose  the following condition on the shape structure: $2b_++c_+=2b_-+c_-=:\lambda$, and consider the following function
\begin{multline*}
f_X(x):=\int_{\REALS^{3}}dydudv\,\langle y,u\vert\PTOLEMY(a_+,c_+)\vert x,v\rangle
\overline{\langle u,y\vert\PTOLEMY(a_-,c_-)\vert v,x\rangle}\\
=\int_{\REALS^{3}}dydudv\,\delta(y+u-x)\delta(u+y-v)\tilde\cff'_{a_+,c_+}(v-u)
\overline{\tilde\cff'_{a_-,c_-}(x-y)}
e^{\imun2\pi (y(v-u)-u(x-y))}\\
=\int_{\REALS}dy\,\tilde\cff'_{a_+,c_+}(y)
\overline{\tilde\cff'_{a_-,c_-}(x-y)}
e^{\imun2\pi (y^2-(x-y)^2)}\\
=\frac{\nu_{c_+,b_+}}{\nu_{c_-,b_-}}
\int_{\REALS}dy\,\frac{\QDILOG(x-y-2\cla a_-+\cla)}{\QDILOG(y+2\cla a_+-\cla)}e^{\imun2\pi x(2y-x)-\imun 4\pi\cla (y c_++(x-y)c_-)}\\
=\frac{\nu_{c_+,b_+}}{\nu_{c_-,b_-}}e^{\imun2\pi\cla^2(c_-^2-c_+^2)}
\chi_{4_1}(x+\cla(c_--c_+),\lambda),
\end{multline*}
where the function $\chi_{4_1}(x,\lambda)$ is defined in \eqref{eq:chi}.
The partition function has the form
\[
\parfun(X)=f_X(0)\frac{e^{-\imun\pi/12}\nu_{c_0,b_0}}{\QDILOG(2\cla a_0-\cla)},
\]
where the weight on the knot is $2\pi a_0$.  In the limit when $a_0\to0$  the conditions $a_+=a_-$, $b_+=b_-$ imply that all edges except for the knot become balanced, and
the renormalized partition function takes the form
\[
\tilde\parfun(X):=\lim_{a_0\to0}\QDILOG(2\cla a_0-\cla)\parfun(X)=\frac{e^{-\imun\pi/12}}{\nu(c_0)}\chi_{4_1}(0).
\]
\subsection{The complement of the knot $5_2$}
Let $X$ be represented by the diagram
\[
 \begin{tikzpicture}[scale=.3]
 \draw[very thick] (0,0)--(3,0);
 \draw[very thick] (6,0)--(9,0);
 \draw[very thick] (3,3)--(6,3);
 \draw(3,0)..controls (3,1) and (6,1)..(6,0);
 \draw(0,0)..controls (0,2) and (3,1)..(3,3);
 \draw(1,0)..controls (1,2) and (4,1)..(4,3);
 \draw(2,0)..controls (2,2.5) and (9,2.5)..(9,0);
 \draw(5,3)..controls (5,1) and (8,2)..(8,0);
 \draw(6,3)..controls (6,2) and (7,2)..(7,0);
 \end{tikzpicture}
\]
We denote $T_1,T_2,T_3$ the left, right, and top tetrahedra respectively. We choose the orientation so that  all tetrahedra are positive.
We impose the conditions that all edges are balanced which correspond to two equations
\[
2a_3=a_1+c_2,\quad b_3=c_1+b_2.
\]
Let us denote
\[
\cff_{a,c}(x,y):=\tilde\cff'_{a,c}(y)e^{\imun2\pi xy}.
\]
The partition function has the form
\(
\parfun(X)=\int_{\REALS}dx\,f_X(x),
\)
where
\begin{multline*}
f_X(x):=\\\int_{\REALS^5}dydzdudvdw\,
 \langle z,w\vert\PTOLEMY(a_1,c_1)\vert u,x\rangle\langle x,v\vert\PTOLEMY(a_2,c_2)\vert y,w\rangle\langle y,u\vert\PTOLEMY(a_3,c_3)\vert v,z\rangle\\
=\int_{\REALS^5}dydzdudvdw\,  \delta(z+w-u)\delta(x+v-y)\delta(y+u-v)\cff_{a_1,c_1}(z,x-w)\cff_{a_2,c_2}(x,w-v)\\
\times\cff_{a_3,c_3}(y,z-u)=
\int_{\REALS^2}dydz\, \cff_{a_1,c_1}(z,z+2x)\cff_{a_2,c_2}(x,-y-z)\cff_{a_3,c_3}(y,x+z)\\
=
\int_{\REALS^2}dydz\, \cff_{a_1,c_1}(z-x,z+x)\cff_{a_2,c_2}(x,x-y-z)\cff_{a_3,c_3}(y,z)\\
=
\int_{\REALS^2}dydz\, \cff_{a_1,c_1}(z-x,z+x)\cff_{a_2,c_2}(x,y)\cff_{a_3,c_3}(x-y-z,z)\\
=e^{-\imun\pi/4}
\int_{\REALS^2}dydz\, \cff_{c_1,b_1}(z+x)\cff_{c_2,b_2}(y)\cff_{c_3,b_3}(z)
e^{\imun2\pi(x-y)(z-x))}\\
=e^{-\imun\pi/4}
\int_{\REALS}dz\, \cff_{c_1,b_1}(z+x)\tilde\cff_{c_2,b_2}(z-x)\cff_{c_3,b_3}(z)
e^{\imun2\pi (z-x)x}\\
=e^{-\imun\pi/3}
\int_{\REALS}dz\, \cff_{c_1,b_1}(z+x)\cff_{b_2,a_2}(z-x)\cff_{c_3,b_3}(z)
e^{\imun\pi (z-x)(z+x)}
=e^{-\imun\pi/3}\nu'_{c_1,b_1}\nu'_{b_2,a_2}\\
\times\nu'_{c_3,b_3}\int_{\REALS-\imun0}dz\, \frac{
e^{\imun\pi(z-x'+\cla(1-2c_2)) (z+x'+\cla(1-2a_1))-\imun4\pi\cla(c_1(z+x')+b_2(z-x')+c_3z)}}{\QDILOG(z+x')\QDILOG(z-x')\QDILOG(z)}\\
=\nu'_{c_1,b_1}\nu'_{b_2,a_2}\nu'_{c_3,b_3}e^{\imun\pi\cla^2(1-2a_1)(1-2c_2)}
\chi_{5_2}(x',a_1-c_1+b_2-a_3)
\end{multline*}
where
\[
x':=x+2\cla(a_1-a_3),
\]
\begin{equation}\label{eq:chi5_2}
\chi_{5_2}(x,\lambda):=\chi_{5_2}(x)e^{\imun4\pi\cla x\lambda},\quad \chi_{5_2}(x):=e^{-\imun\pi/3}\int_{\REALS-\imun0}dz\, \frac{
e^{\imun\pi(z-x) (z+x)}}{\QDILOG(z+x)\QDILOG(z-x)\QDILOG(z)}.
\end{equation}

\subsection{One-vertex H-triangulation of $(S^3,5_2)$}
Let $X$ be represented by the diagram
\[
 \begin{tikzpicture}[scale=.3]
 \draw[very thick] (0,0)--(3,0);
 \draw[very thick] (3,1)--(6,1);
 \draw[very thick] (6,0)--(9,0);
 \draw[very thick] (3,3)--(6,3);
 \draw(5,1)..controls (5,1/2) and (6,1/2)..(6,1);
 \draw (3,0)--(3,1);
 \draw(4,1)..controls (4,0) and (6,.5)..(6,0);
 \draw(0,0)..controls (0,2) and (3,1)..(3,3);
 \draw(1,0)..controls (1,2) and (4,1)..(4,3);
 \draw(2,0)..controls (2,2.5) and (9,2.5)..(9,0);
 \draw(5,3)..controls (5,1) and (8,2)..(8,0);
 \draw(6,3)..controls (6,2) and (7,2)..(7,0);
 \end{tikzpicture}
\]
We denote $T_0,T_1,T_2,T_3$ the central, left, right, and top tetrahedra respectively. If we choose the orientation so that the central tetrahedron $T_0$ is negative then all other tetrahedra are positive.
The edge representing the knot $5_2$ connects the last two edges of  $T_0$, so that the weight on the knot is given by $2\pi a_0$.  In the limit $a_0\to0$,  all edges, except for the knot, become balanced under the conditions
\[
a_1=c_2=a_3,\quad b_3=c_1+b_2,
\]
and the renormalized partition function takes the form
\[
\tilde\parfun(X):=\lim_{a_0\to0}\QDILOG(2\cla a_0-\cla)\parfun(X)=\frac{e^{\imun\pi/4}}{\nu(c_0)}\chi_{5_2}(0).
\]
\section{Proof of Theorem~\ref{th:4-1--5-2}}\label{proof-th3}
 In the case of the pairs $(S^3,4_1)$ and $(S^3,5_2)$, the first two parts of Conjecture~\ref{conj} directly follow from the results of calculations in Section~\ref{ex}, so that in order to complete the proof of Theorem~\ref{th:4-1--5-2}, we only need to prove part (3) of Conjecture~\ref{conj}.

 For $n>1$ and $\hbar>0$, consider the following function
\[
g_n(\hbar)=\frac1{2\pi\sqrt{\hbar}}\int_{\bR-\imun 0}f_n(\hbar,z)dz,\quad
f_n(\hbar,z):=\QDILOG\left(\frac{z}{2\pi\sqrt{\hbar}}\right)^{-n}e^{\frac{\imun z^2}{4\pi\hbar}}.
\]
From formulae~\eqref{eq:chi} and \eqref{eq:chi5_2} it follows that
\[
\chi_{4_1}(0)=g_2(\hbar),\quad \chi_{5_2}(0)=g_3(\hbar).
\]
Thus, we need to calculate the asymptotic behavior of $g_n(\hbar)$ at small $\hbar$ and then to take the particular cases with $n=2$ and $n=3$.

We denote
\[
h_n(\hbar,z):=\frac{\partial \log f_n(\hbar,z)}{\partial z}
\]
Let $S_{n,\hbar}$ be the set of solutions of the equation $h_n(\hbar,z)=0$, and let $z_{n,\hbar}\in S_{n,\hbar}$ be such that $|f_n(\hbar,z_{n,\hbar})|=\min |f_n(\hbar,S_{n,\hbar})|$. We define a set
\[
C_{n,\hbar}:=\{z\in\bC\,\vert\ \arg f_n(\hbar,z)=\arg f_n(\hbar,z_{n,\hbar}),\ |f_n(\hbar,z)|\le|f_n(\hbar,z_{n,\hbar})|\}
\]
which is smooth one-dimensional contour which does not contain any other elements of $S_{n,\hbar}$.
\begin{lemma}
For sufficiently small $\hbar$, there exists an orientation on $C_{n,\hbar}$ such that
one has the equality
\[
2\pi\sqrt{\hbar}g_n(\hbar)=\int_{C_{n,\hbar}}f_n(\hbar,z)dz,
\]
and there exist a point $z_n$ in a strip around the real axis and  a smooth contour $C_n$ passing through $z_n$ such that $z_n=\lim_{\hbar\to0}z_{n,\hbar}$ and $C_n=\lim_{\hbar\to0}C_{n,\hbar}$.
\end{lemma}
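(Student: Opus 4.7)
The plan is a standard steepest-descent argument using the semiclassical asymptotics of Faddeev's quantum dilogarithm. The known expansion
\[
\log\QDILOG\!\left(\frac{z}{2\pi\sqrt{\hbar}}\right) = \frac{1}{2\pi\imun\hbar}\,\dil(-e^{z}) + O(1), \qquad \hbar\to 0^+,
\]
(which is uniform in $z$ on compact subsets avoiding the zeros of $\QDILOG$) gives
\[
\log f_n(\hbar,z) = \frac{1}{2\pi\imun\hbar}\Sigma_n(z) + O(1),\qquad \Sigma_n(z) := -\frac{z^2}{2}-n\,\dil(-e^{z}),
\]
so that the saddle equation $h_n(\hbar,z)=0$ tends, as $\hbar\to 0^+$, to $\Sigma_n'(z)=0$, i.e.\ $e^{z}=(1+e^{z})^n$. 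For $n=2,3$ this algebraic equation admits a distinguished complex root $z_n$ inside a horizontal strip around the real axis, corresponding geometrically to the shape parameters of the complete hyperbolic structure on $S^3\setminus 4_1$, respectively $S^3\setminus 5_2$. An elementary computation shows that $\Sigma_n''(z_n)\neq 0$, i.e.\ the saddle is non-degenerate.

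Given non-degeneracy, the implicit function theorem applied to $h_n(\hbar,\cdot)=0$ produces, for every sufficiently small $\hbar>0$, a unique solution $z_{n,\hbar}$ in a fixed small ball around $z_n$, depending smoothly on $\hbar$ and satisfying $z_{n,\hbar}\to z_n$ as $\hbar\to 0^+$. The remaining elements of $S_{n,\hbar}$ stay a bounded distance away from $z_{n,\hbar}$, and a direct inspection of $\vert\QDILOG\vert$ outside that ball shows that $\vert f_n(\hbar,\cdot)\vert$ is strictly larger there; hence $z_{n,\hbar}$ realises the minimum in the definition of $C_{n,\hbar}$.

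The contour $C_{n,\hbar}$ is the connected component through $z_{n,\hbar}$ of the level set $\{\arg f_n(\hbar,\cdot)=\arg f_n(\hbar,z_{n,\hbar})\}$ restricted to $\{\vert f_n\vert\le\vert f_n(\hbar,z_{n,\hbar})\vert\}$. By the Morse lemma applied to $\Sigma_n$ at $z_n$, the corresponding limit object
\[
C_n = \{z\in\bC : \arg e^{\Sigma_n(z)/(2\pi\imun\hbar)} \text{ is fixed at its saddle value}\}
\]
is a smooth one-dimensional submanifold through $z_n$, and $C_{n,\hbar}\to C_n$ smoothly on compact subsets by continuity of the perturbation. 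Globally, the two branches of $C_{n,\hbar}$ exit any fixed compact set along directions dictated by the asymptotics of $\Sigma_n$, namely along which $\re\Sigma_n\to-\infty$, so that $f_n$ decays super-exponentially along them.

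Finally, to establish the identity $2\pi\sqrt{\hbar}\,g_n(\hbar)=\int_{C_{n,\hbar}} f_n(\hbar,z)\,dz$ one deforms the initial contour $\bR-\imun 0$ into $C_{n,\hbar}$. The function $f_n(\hbar,\cdot)$ is meromorphic, with poles located where $\QDILOG(z/(2\pi\sqrt{\hbar}))$ vanishes; these poles all lie in the open upper half plane, at heights which, for small fixed $\hbar$, stay bounded away from $\bR$ and from $C_{n,\hbar}$. Combined with the Gaussian factor $e^{\imun z^2/(4\pi\hbar)}$ and the exponential decay of $\QDILOG^{-1}$ along the real directions, this allows the deformation with no residue and no contribution from infinity. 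The main obstacle I expect is the global control of the steepest-descent contour in the third paragraph: one must rule out, for $\hbar$ small enough, that distinct branches of the level set pinch together or that $C_{n,\hbar}$ gets blocked by other critical points of $\log f_n$. This amounts to a careful Morse-theoretic analysis of $\Sigma_n$ in the complex plane, using the explicit location of the critical values of $\dil(-e^{z})$.
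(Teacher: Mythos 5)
Your proposal follows essentially the same route as the paper: the semiclassical expansion of $\QDILOG$ produces the limiting potential $v_n(z)=-n\dil(-e^z)-\frac12 z^2$ (your $\Sigma_n$), the limiting saddle equation $n\log(1+e^z)=z$, the identification of $z_n$ among the critical points and of $C_{n,\hbar}$ with the steepest-descent level set, and finally the deformation of $\bR-\imun 0$ justified by the poles of $\QDILOG^{-n}$ lying in the upper half plane and the decay of $f_n$ along the exit directions. The global control of the contour that you flag as the remaining obstacle is treated in the paper only by describing $C_n$ explicitly as $\{\re v_n(z)=\re v_n(z_n),\ \im v_n(z)\le\im v_n(z_n)\}$ with asymptotes $\im z=\pm\re z$ along which $\im v_n\to-\infty$ (together with a schematic picture), so your argument is at essentially the same level of detail and rigor as the paper's.
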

\begin{proof}
By using the asymptotic formula~\eqref{eq:q-cl}, and the relation $\hbar=(\la+\la^{-1})^{-2}$, we have
\[
\QDILOG\left(\frac{x}{2\pi\sqrt{\hbar}}\right)
=e^{\frac1{2\pi\imun\hbar}\dil(-e^x)}g(x)\left(1+\mathcal{O}(\hbar)\right),
\]
where
\[
g(x):=\left(1+e^x\right)^{\frac{\imun x}{2\pi}}e^{\frac{\imun}{\pi}\dil(-e^x)}.
\]
Here, the Euler dilogarithm $\dil(-e^z)$ is extended analytically to the entire complex plane cut along two half lines $\re z=0, |\im z|>\pi$, so that the following identity is satisfied:
\[
\dil(-e^z)+\dil(-e^{-z})=-\frac12z^2-\frac{\pi^2}6.
\]
These formulae imply the following asymptotic formula for the integrand
\begin{equation}\label{eq:fn}
f_n(\hbar,z)
=e^{\frac{v_n(z)}{2\pi\imun\hbar}}g(z)^{-n}\left(1+\mathcal{O}(\hbar)\right)
\end{equation}
where
\[
v_n(z):=-n\dil(-e^z)-\frac12z^2.
\]
Taking the logarithm of equation~\eqref{eq:fn}, we obtain
\[
\log f_n(\hbar,z)=\frac1{2\pi\imun\hbar}(v_n(z)+\mathcal{O}(\hbar)),
\]
and, differentiating the latter with respect to $z$,
\[
h_n(\hbar,z)=\frac1{2\pi\imun\hbar}(v_n'(z)+\mathcal{O}(\hbar)),
\]
where
\begin{equation}\label{eq:crit}
v'_n(z):=\frac{\partial v_n(z)}{\partial z}=n\log(1+e^z)-z.
\end{equation}
Thus, for sufficiently small $\hbar$, the set $S_{n,\hbar}$ approaches the finite set $S_n$ of solutions of the equation $v'_n(z)=0$ which is entirely contained in a strip around the real axis. In particular, there exists $z_n\in S_n$ such that
\[
\lim_{\hbar\to0}z_{n,\hbar}=z_n.
\]
From the definition of $z_{n,\hbar}$, it is clear that $z_n$ is characterized by the property that it minimizes the imaginary part of $v_n(z)$ on the set $S_n$:
\[
\im v_n(z_n)=\min\im v_n(S_n).
\]
Also, by observing that
\[
\arg f_n(\hbar,z)=\im \log f_n(\hbar,z)=\frac1{2\pi\hbar}(-\re v_n(z)+\mathcal{O}(\hbar))
\]
we conclude that the contour $C_{n,\hbar}$, when $\hbar\to0$, approaches the set
\[
C_n:=\{z\in\bC\,\vert \re v_n(z)=\re v_n(z_n),\ \im v_n(z)\le\im v_n(z_n)\}.
\]
 It is a smooth contour which approaches  the asymptote $\re z + \im z=0$ for $\re z\to+\infty$, and the asymptote  $\re z -\im z=0$ for $\re z\to-\infty$ with the behavior for the imaginary part of $v_n(z)$:
\[
\lim_{z\in C_n,\re z\to\pm\infty}\im v_n(z)=-\infty
 \]
 The contour $C_n$ is schematically presented in this picture
\[
 \begin{tikzpicture}[scale=1]
 \draw[->] (-3,0)--(3,0);
 \draw[->] (0,-1.5)--(0,1.5) ;
 \draw (3,0) node[right]{$\re z$};
 \draw (0,1.5) node[above]{$\im z$};
 \draw (-2.8,1.5) node[right]{$\im z=-\re z$};
 \draw (2.8,1.5) node[left]{$\im z=\re z$};
 \node [fill=black,inner sep=1pt,label=right:$\imun\pi$] at (0,1){};
 \node [fill=black,inner sep=1pt,label=right:$-\imun\pi$] at (0,-1){};
 \draw[gray!50] (-3,1.5)--(3,-1.5);
 \draw[gray!50] (-3,-1.5)--(3,1.5);
 \draw[blue,thick] (-2.9,-1.5)..controls (0,-.2) and (0,-.2) .. (2.9,-1.5);
 \end{tikzpicture}
\]
Orienting $C_n$ from left to right, it is clear that for sufficiently small $\hbar$ we have the equalities
\[
2\pi\sqrt{\hbar}g_n(\hbar)=\int_{C_n}f_n(\hbar,z)dz=\int_{C_{n,\hbar}}f_n(\hbar,z)dz
\]
\end{proof}
Now, as by construction $C_n$ contains the only critical point of $v_n(z)$ at $z=z_n$, the steepest descent method gives the following asymptotic formula

\[
g_n(\hbar)=e^{\frac{v_n(z_n)}{2\pi\imun\hbar}}
\frac{g(z_n)^{-n}}{\sqrt{\imun v_n''(z_n)}}\left(1+\mathcal{O}(\hbar)\right),
\]
in particular,
\[
\lim_{\hbar\to0}2\pi\hbar\log|g_n(\hbar)|=\im v_n(z_n)
\]
Finally, we have the following two particular cases:
\[
\im v_2(z_2)=-\vol(S^3\setminus 4_1),\quad \im v_3(z_3)=-\vol(S^3\setminus 5_2).
\]
which exactly correspond to part (3) of Conjecture~\ref{conj}.

\section{Appendix A. Faddeev's quantum dilogarithm}
Faddeev's quantum dilogarithm $\QDILOG(z)$ is defined by the integral
   \begin{equation}\label{eq:ncqdl}
\QDILOG(z)\equiv\exp\left(
\int_{\IMUN 0-\infty}^{\IMUN 0+\infty}
\frac{e^{-\IMUN 2 zw}\, dw}{4\sinh(w\la)
\sinh(w/\la) w}\right)
    \end{equation}
in the strip $|\im z|<|\im\cla|$, where
\[
\cla\equiv\IMUN(\la+\la^{-1})/2
\]
Define also
\begin{equation}\label{eq:cons}
\zeta_{inv}\equiv e^{\IMUN\pi(1+2\cla^2)/6}=e^{\IMUN\pi\cla^2}
\zeta_o^2,\quad
\zeta_o\equiv e^{\IMUN\pi(1-4\cla^2)/12}
\end{equation}
When  $\im\la^2>0$, the integral can be calculated explicitly
\begin{equation}\label{eq:ratio}
\QDILOG(z)=
(e^{2\pi (z+\cla)\la};q^2)_\infty/
(e^{2\pi (z-\cla)\la^{-1}};\bar q^2)_\infty
\end{equation}
where
\[
q\equiv e^{\IMUN\pi\la^2},\quad\bar q\equiv e^{-\IMUN\pi\la^{-2}}
\]
Using symmetry properties
\[
\QDILOG(z)=\mathop{\Phi_{-\la}}(z)=\mathop{\Phi_{1/\la}}(z)
\]
we choose
\[
\re\la>0,\quad \im \la\ge0
\]
$\QDILOG(z)$ can be continued analytically in variable
$z$ to the entire complex plane as a meromorphic function with essential singularity at infinity and with the following characteristic properties
\begin{description}
\item[zeros and poles]
  \begin{equation}
    \label{eq:polzer}
(\QDILOG(z))^{\pm1}=0\ \Leftrightarrow \ z=
\mp(\cla+m\IMUN\la+n\IMUN\la^{-1}),\ m,n\in\INTEGERS_{\ge0}
  \end{equation}
\item[behavior at infinity] depending on the direction along which the limit is taken
\begin{equation}\label{eq:asymp}
\QDILOG(z)\bigg\vert_{|z|\to\infty} \approx\left\{
\begin{array}{ll}
1&|\arg z|>\frac{\pi}{2}+\arg \la\\
\zeta_{inv}^{-1}e^{\IMUN\pi z^2}&|\arg z|<\frac{\pi}{2}-\arg\la\\
\frac{(\bar q^2;\bar q^2)_\infty}{\Theta(\IMUN\la^{-1}z;-\la^{-2})}&
|\arg z-\frac\pi2|<\arg\la\\
\frac{\Theta(\IMUN\la z;\la^{2})}{(q^2; q^2)_\infty}&
|\arg z+\frac\pi2|<\arg\la
\end{array}\right.
\end{equation}
where
\[
\Theta(z;\tau)\equiv\sum_{n\in\INTEGERS}
e^{\IMUN\pi\tau n^2+\IMUN2\pi zn}, \quad\im\tau>0
\]
\item[inversion relation]
\begin{equation}\label{eq:inversion}
\QDILOG(z)\QDILOG(-z)=\zeta_{inv}^{-1}e^{\IMUN\pi z^2}
\end{equation}
\item[functional equations]
\begin{equation}\label{eq:shift}
\QDILOG(z-\IMUN\la^{\pm1}/2)=(1+e^{2\pi\la^{\pm1}z })
\QDILOG(z+\IMUN\la^{\pm1}/2)
\end{equation}
\item[unitarity] when  $\la$ is real or on the unit circle
\begin{equation}\label{eq:qdlunitarity}
(1-|\la|)\im\la=0\ \Rightarrow\
\overline{\QDILOG(z)}=1/\QDILOG(\bar z)
\end{equation}
\item[quantum pentagon identity]
\begin{equation}\label{eq:pent}
\QDILOG(\MOM)\QDILOG(\POS)=\QDILOG(\POS)\QDILOG(\MOM+\POS)
\QDILOG(\MOM)
\end{equation}
where selfadjoint operators $\MOM$ and $\POS$ in $L^2(\REALS)$
satisfy the commutation relation~\eqref{eq:hei-cr}.
\end{description}

\subsection{Integral analog of $_1\psi_1$-summation formula of Ramanujan}
\label{sec:eioa-aiae-_1ps}

Following \cite{FKV}, consider the following Fourier integral
\begin{equation}\label{ramanint}
\RAMAN(u,v,w)\equiv
\int_{\REALS}\frac{\QDILOG(x+u)}{\QDILOG(x+v)}e^{2\pi\IMUN wx}\, dx
\end{equation}
where
\begin{equation}\label{restrictions1}
\im(v+\CLA)>0,\quad\im(-u+\CLA)>0, \quad \im(v-u)<\im w<0
\end{equation}
Conditions~\eqref{restrictions1} can be relaxed by deforming the integration contour in the complex plane of the variable $x$, keeping asymptotic directions within the sectors
$\pm(|\arg x|-\pi/2)>\arg\la$. Enlarged in this way the domain of the  variables $u,v,w$ has the form
\begin{equation}\label{restrictions2}
|\arg (\IMUN z)|<\pi-\arg\la,\quad z\in\{w,v-u-w,u-v-2\cla\}
\end{equation}
Considering $\QDILOG(z)$ as a non-compact analog the function
$(x;q)_\infty$, definition~\eqref{ramanint} can be interpreted as an integral analog of $_1\psi_1$-summation formula of Ramanujan.

By the method of residues, \eqref{ramanint} can be calculated explicitly
\begin{gather}\label{ramanres1}
  \RAMAN(u,v,w)=
  \zeta_o\frac{\QDILOG(u-v-\CLA)\QDILOG(w+\CLA)}{\QDILOG(u-v+w-\CLA)}
  e^{-2\pi\IMUN w(v+\CLA)} \\\label{ramanres2}
  =\zeta_o^{-1}\frac{\QDILOG(v-u-w+\CLA)}{\QDILOG(v-u+\CLA)
\QDILOG(-w-\CLA)}
  e^{-2\pi\IMUN w(u-\CLA)}
\end{gather}
where the two expressions in the right hand side are related by the inversion relation~\eqref{eq:inversion}.
\subsection{Fourier transformation formulae}
\label{sec:oidi-idai-oodua}

Special cases of $\RAMAN(u,v,w)$ lead to the following Fourier transformation formulae  for Faddeev's quantum dilogarithm
\begin{multline}\label{fourier1}
  \lefteqn{\FQDILOG_+(w)\equiv \int_{\REALS}\QDILOG(x)e^{2\pi \IMUN
      wx}\,dx
    =\RAMAN(0,v,w)\vert_{v\to-\infty}}\\
  =\zeta_o^{-1}e^{2\pi \IMUN w\CLA}/
  \QDILOG(-w-\CLA)=\zeta_oe^{-\IMUN\pi w^2}
  \QDILOG(w+\CLA)
\end{multline}
and
\begin{multline}\label{fourier2}
  \lefteqn{\FQDILOG_-(w)\equiv \int_{\REALS}(\QDILOG(x))^{-1}e^{2\pi
      \IMUN wx}\,dx
    =\RAMAN(u,0,w)\vert_{u\to-\infty}}\\
  =\zeta_oe^{-2\pi\IMUN w\CLA} \QDILOG(w+\CLA)=
  \zeta_o^{-1}e^{\IMUN\pi w^2}/\QDILOG(-w-\CLA)
\end{multline}
The corresponding inverse transformations look as follows
\begin{equation}\label{finv}
(\QDILOG(x))^{\pm1}=\int_{\REALS}\FQDILOG_\pm(y)e^{-2\pi\IMUN xy}dy
\end{equation}
where the pole $y=0$ should be surrounded from below.

\subsection{Other integral identities}
\label{sec:eioa-oiaa}

Faddeev's quantum dilogarithm also satisfies other analogs of hypergeometric identities, see for example \cite{GR}. Following \cite{V,K5}, for any $n\ge 1$, we define
\begin{equation}\label{eq:ihg}
\IHG{n}(a_1,\ldots,a_n;b_1,\ldots,b_{n-1};w)
\equiv\int_\REALS dx\,
e^{\IMUN2\pi x(w-\cla)}\prod_{j=1}^n
\frac{\QDILOG(x+a_j)}{\QDILOG(x+b_j-\cla)}
\end{equation}
where $b_n=\IMUN0$,
\[
\im(b_j)>0,\quad \im(\cla-a_j)>0,\quad \sum_{j=1}^n\im(b_j-a_j-\cla)<
\im(w-\cla)<0
\]
The integral analog of $_1\psi_1$-summation formula of Ramanujan in this notation takes the form
\begin{equation}\label{eq:raman}
\Psi(u,v,w)=e^{-\imun2\pi w(v+\cla)}\IHG{1}(u-v-\cla;w+\cla),\quad
\IHG{1}(a;w)=\zeta_o
\frac{\QDILOG(a)
\QDILOG(w)}{\QDILOG(a+w-\cla)}
\end{equation}
Equivalently, we can write
\begin{multline}\label{eq:ramanbar}
\bar\Psi_1(a;w)\equiv\int_\REALS\frac{\QDILOG(x+\cla-\IMUN0)}
{\QDILOG(x+a)}
e^{-\IMUN2\pi x(w+\cla)}dx\\
=e^{\IMUN2\pi(a+\cla)(w+\cla)}\IHG{1}(-a;-w)
=\zeta_o^{-1}
\frac{\QDILOG(a+w+\cla)}{\QDILOG(a)
\QDILOG(w)}
\end{multline}
Using the integral $_1\psi_1$-summation formula of Ramanujan, we can obtain the integral analog of the transformation of Heine for  $_1\phi_2$
\[
\IHG{2}(a,b;c;w)=\frac{\QDILOG(a)}{\QDILOG(c-b)}\,
\IHG{2}(c-b,w;a+w;b)
\]
Using here the evident symmetry
\[
\IHG{2}(a,b;c;w)=\IHG{2}(b,a;c;w)
\]
we obtain the integral analog of Euler--Heine transformation
\begin{multline}\label{eq:eu-he}
\IHG{2}(a,b;c;w)\\
=\frac{\QDILOG(a)\QDILOG(b)\QDILOG(w)}
{\QDILOG(c-b)\QDILOG(c-a)\QDILOG(a+b+w-c)}\,
\IHG{2}(c-a,c-b;c;a+b+w-c)
\end{multline}
Performing the Fourier transformation on $w$ and using equation~\eqref{eq:raman}, we obtain the integral version of Saalsch\"utz summation formula:
\begin{multline}\label{eq:saal}
\IHG{3}(a,b,c;d,a+b+c-d-\cla;-\cla)
=\zeta_o^3
e^{\IMUN\pi d(2\cla-d)}\\
\times\frac{\QDILOG(a)\QDILOG(b)\QDILOG(c)
\QDILOG(a-d)\QDILOG(b-d)\QDILOG(c-d)}{\QDILOG(a+b-d-\cla)\QDILOG(b+c-d-\cla)
\QDILOG(c+a-d-\cla)}
\end{multline}
One particular case is obtained by taking the limit $c\to-\infty$:
\begin{equation}\label{eq:saal1}
\IHG{2}(a,b;d;-\cla)=\zeta_o^3
e^{\IMUN\pi d(2\cla-d)}\frac{\QDILOG(a)\QDILOG(b)
\QDILOG(a-d)\QDILOG(b-d)}{\QDILOG(a+b-d-\cla)}
\end{equation}

\subsection{Quasi-classical limit of Faddeev's quantum dilogarithm}
\label{sec:eaac-idaa}

We consider the asymptotic limit $\la\to0$.
\begin{proposition}
For $\la\to0$ and fixed $x$, one has the following asymptotic expansion
\begin{equation}
  \label{eq:as-ex}
 \ln\QDILOG\left(\frac x{2\pi\la}\right)=\sum_{n=0}^\infty
\left(2\pi\imun\la^2\right)^{2n-1}\frac{B_{2n}(1/2)}{(2n)!}
\frac{\partial^{2n}\dil(-e^x)}{\partial x^{2n}}
\end{equation}
where $B_{2n}(1/2)$ --- Bernoulli polynomials evaluated at $1/2$.
\end{proposition}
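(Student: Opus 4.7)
The plan is to start from the defining integral
\[
\ln\QDILOG\!\left(\frac x{2\pi\la}\right)
=\int_{C}\frac{e^{-\imun xw/(\pi\la)}\,dw}{4\sinh(w\la)\sinh(w/\la)w},
\]
and make the rescaling $w=\pi\la t$, after which the $\la$-dependence is concentrated in a single hyperbolic factor:
\[
\ln\QDILOG\!\left(\frac x{2\pi\la}\right)
=\int_{C'}\frac{e^{-\imun xt}\,dt}{4\sinh(\pi\la^{2}t)\sinh(\pi t)\,t},
\]
where $C'$ is the image of $C$, still deviating into the upper half-plane near the origin. This reduces the problem to a one-parameter expansion in $\la^{2}$.

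Next, I would feed in the generating-function identity
\[
\frac{t/2}{\sinh(t/2)}=\sum_{n=0}^{\infty}\frac{B_{2n}(1/2)}{(2n)!}\,t^{2n}
\]
for the Bernoulli polynomials at $1/2$, which gives, after setting $t\mapsto 2\pi\la^{2}t$,
\[
\frac{1}{2\sinh(\pi\la^{2}t)}=\sum_{n=0}^{\infty}\frac{B_{2n}(1/2)}{(2n)!}\,(2\pi\la^{2})^{2n-1}t^{\,2n-2}\cdot t .
\]
Substituting termwise transforms the integral into
\[
\sum_{n\ge 0}\frac{B_{2n}(1/2)}{(2n)!}\,(2\pi\la^{2})^{2n-1}
\int_{C'}\frac{t^{2n-2}\,e^{-\imun xt}}{2\sinh(\pi t)}\,dt .
\]

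The individual integrals are then identified with derivatives of the dilogarithm by a residue calculation. Using $t^{k}e^{-\imun xt}=\imun^{k}\partial_{x}^{k}e^{-\imun xt}$, it suffices to compute the base case
\[
I(x):=\int_{C'}\frac{e^{-\imun xt}}{2\sinh(\pi t)}\,dt ,
\]
which by closing the contour in the appropriate half-plane and summing the residues at $t=-\imun k$ (residue $(-1)^{k}/(2\pi)$) yields the geometric series $-\imun/(1+e^{-x})$. Since $\partial_{x}^{2}\dil(-e^{x})=-1/(1+e^{-x})$, one gets $I(x)=\imun\,\partial_{x}^{2}\dil(-e^{x})$, and more generally
\[
\int_{C'}\frac{t^{2n-2}e^{-\imun xt}}{2\sinh(\pi t)}\,dt
=\imun^{2n-1}\,\frac{\partial^{2n}}{\partial x^{2n}}\dil(-e^{x}),
\]
which combines with the prefactors $(2\pi\la^{2})^{2n-1}$ to produce exactly $(2\pi\imun\la^{2})^{2n-1}$ as required.

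The main obstacle is that the expansion of $1/(2\sinh(\pi\la^{2}t))$ is only asymptotic: it is not convergent uniformly in $t$, and at $n=0$ the integrand $t^{-2}e^{-\imun xt}/\sinh(\pi t)$ has a non-integrable singularity at the origin which must be tamed by the contour $C'$. I would handle this by writing the finite partial-sum approximation of $1/(2\sinh(\pi\la^{2}t))$ with an explicit integral remainder, substituting into the Fourier integral, and showing that after deforming $C'$ to pass below the origin at an $\la$-independent height the remainder is $O(\la^{2N+1})$ for each $N$, thereby producing \eqref{eq:as-ex} as a genuine asymptotic expansion in the Poincar\'e sense. The residue calculation above justifies the pointwise meaning of each coefficient, while the remainder estimate justifies the expansion as a whole.
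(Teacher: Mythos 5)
Your argument is correct, but it takes a genuinely different route from the paper. The paper derives \eqref{eq:as-ex} from the functional equation \eqref{eq:shift}: writing the shift $z\mapsto z\pm\imun\la/2$ as the action of $e^{\pm\imun\pi\la^2\partial/\partial x}$ on $\ln\QDILOG\left(\frac{x}{2\pi\la}\right)$ and using $\ln(1+e^x)=-\partial_x\dil(-e^x)$, it formally inverts $2\sinh(\imun\pi\la^2\partial/\partial x)$ and expands the resulting operator $\frac{\imun\pi\la^2\partial_x}{\sinh(\imun\pi\la^2\partial_x)}$ via the same Bernoulli generating function you use. You instead work directly with the defining contour integral, rescale $w=\pi\la t$ so that all the $\la$-dependence is concentrated in $1/\sinh(\pi\la^2t)$, expand that factor, and identify each coefficient by residues; your base case $I(x)=-\imun/(1+e^{-x})=\imun\,\partial_x^2\dil(-e^x)$ checks out, and for $n=0$ the third-order pole at the origin together with the inversion identity $\dil(-e^z)+\dil(-e^{-z})=-\tfrac12 z^2-\tfrac{\pi^2}{6}$ correctly reproduces the leading term $\dil(-e^x)/(2\pi\imun\la^2)$. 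What your approach buys is a genuine Poincar\'e asymptotic expansion with controllable remainders --- the paper's manipulation is explicitly ``formal'' and does not by itself say in what sense the series approximates $\ln\QDILOG$ --- at the price of the residue computations and a remainder estimate. Two cautions: the contour must stay \emph{above} the origin (as in the definition of $\QDILOG$) for your residue sum over $t=-\imun k$, $k\ge0$, to include the $k=0$ term; taken literally, your later remark about deforming $C'$ to pass \emph{below} the origin would shift every coefficient by the residue at $t=0$ and give the wrong answer. Also, the successive terms of \eqref{eq:as-ex} have orders $\la^{4n-2}$, so the remainder after $N$ terms should be shown to be $o(\la^{4N-2})$ (the standard splitting of the $t$-integral at $|t|\sim\la^{-2}$, using the exponential decay of $1/\sinh(\pi t)$, in fact gives $O(\la^{4N+2})$), rather than the $O(\la^{2N+1})$ you state; this is a matter of bookkeeping, not of substance.
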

\begin{proof}
From one hand side, we can write formally
\[
\ln\QDILOG\left(\frac {x-\imun\pi\la^2}{2\pi\la}\right)-
\ln\QDILOG\left(\frac {x+\imun\pi\la^2}{2\pi\la}\right)=
-2\sinh(\imun\pi\la^2\partial/\partial x)
\ln\QDILOG\left(\frac x{2\pi\la}\right)
\]
On the other hand side,
\[
\ln(1+e^x)=\frac{\partial}{\partial x}\int_{-\infty}^x\ln(1+e^z)dz=
-\frac{\partial}{\partial x}\dil(-e^x)
\]
Substituting these expressions into the linearized functional equation~(\ref{eq:shift})
with positive exponent of $\la$, we obtain
 \begin{equation}
   \label{eq:as-ex-i}
  2\pi\imun\la^2 \ln\QDILOG\left(\frac x{2\pi\la}\right)=
\frac{\imun\pi\la^2\partial/\partial x}{
\sinh(\imun\pi\la^2\partial/\partial x)}\dil(-e^x)
 \end{equation}
Using the expansions
\[
\frac z {\sinh(z)}=\sum_{n=0}^\infty B_{2n}(1/2)\frac{(2z)^{2n}}{(2n)!}
\]
we obtain (\ref{eq:as-ex}).
\end{proof}
\begin{corollary}
For $\la\to0$, one has
\begin{equation}
  \label{eq:q-cl}
\QDILOG\left(\frac x{2\pi\la}\right)
=e^{\frac1{2\pi\imun\la^2}\dil(-e^x)}\left(1+\mathcal{O}(\la^2)\right)
\end{equation}
\end{corollary}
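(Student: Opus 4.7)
The corollary is the leading-order truncation of the all-orders expansion established in the preceding Proposition, so the plan is direct. First I would isolate the $n = 0$ summand of \eqref{eq:as-ex}: using the Bernoulli value $B_0(1/2) = 1$, this summand equals $\frac{1}{2\pi\imun\la^2}\dil(-e^x)$. Writing
\[
\ln\QDILOG\left(\tfrac{x}{2\pi\la}\right) = \frac{1}{2\pi\imun\la^2}\dil(-e^x) + R(\la,x),
\]
where
\[
R(\la,x) := \sum_{n=1}^\infty (2\pi\imun\la^2)^{2n-1}\frac{B_{2n}(1/2)}{(2n)!}\frac{\partial^{2n}\dil(-e^x)}{\partial x^{2n}},
\]
each term of $R$ carries a factor $(2\pi\imun\la^2)^{2n-1}$ with $n \ge 1$, so the leading piece (from $n=1$) is $O(\la^2)$ and all subsequent pieces are of order $\la^6$ or smaller. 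Exponentiating and using $e^{R(\la,x)} = 1 + R(\la,x) + O(R(\la,x)^2) = 1 + O(\la^2)$ then produces the stated formula~\eqref{eq:q-cl}.

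The only point requiring care is to establish $R(\la,x) = O(\la^2)$ as a genuine asymptotic statement rather than a purely formal one. For this I would return to the exact operator identity \eqref{eq:as-ex-i}, which expresses $2\pi\imun\la^2\,\ln\QDILOG(x/(2\pi\la))$ as $\frac{\imun\pi\la^2\partial_x}{\sinh(\imun\pi\la^2\partial_x)}\dil(-e^x)$. Since $z/\sinh(z)$ is analytic on $|z|<\pi$ with local expansion $z/\sinh(z) = 1 - z^2/6 + O(z^4)$, the subtracted operator $\frac{\imun\pi\la^2\partial_x}{\sinh(\imun\pi\la^2\partial_x)} - 1$, applied to $\dil(-e^x)$, produces a quantity of size $O(\la^4)$ uniformly in $x$ on compacta, using only the boundedness of $\partial_x^{2}\dil(-e^x) = -e^x/(1+e^x)$ and its higher derivatives. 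Dividing by $2\pi\imun\la^2$ then yields the desired $O(\la^2)$ bound for $R(\la,x)$.

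The main obstacle is therefore quite mild and lies entirely in the rigorous control of this remainder; once it is in hand, the exponentiation step is routine and the corollary follows. No additional ingredients beyond \eqref{eq:as-ex} and \eqref{eq:as-ex-i} are required.
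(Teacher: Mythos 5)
Your argument is exactly the one the paper intends: the corollary is stated without proof as an immediate consequence of the expansion~\eqref{eq:as-ex}, obtained by keeping the $n=0$ term (with $B_0(1/2)=1$) and exponentiating the $O(\la^2)$ remainder coming from $n\ge1$. Your additional remark on controlling the remainder via the analyticity of $z/\sinh(z)$ and the boundedness of $\partial_x^{2}\dil(-e^x)=-e^x/(1+e^x)$ is a correct strengthening of what the paper leaves implicit, but it does not change the route.
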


\section{Appendix B. Temperate distributions}
\label{dist}
Recall the definition of the Schwartz space $\S(\bR^n)$ and its dual space $\S'(\bR^n)$ consisting of temperate distributions.

\begin{definition}
We denote by $\S(\bR^n)$ the set of all $\phi\in C^\infty(\bR^n)$ such that
$$
\sup_{x\in \bR^n} | x^\beta \partial^\alpha\phi(x)| < \infty
$$
for all multi-indices $\alpha$ and $\beta$.
We denote by $\S'(\bR^n)$ the space of linear functionals which are continuous with respect to the these seminorms.
\end{definition}

We further recall that the Fourier transform takes $\S(\bR^n)$ to $\S(\bR^n)$ and dually takes $\S'(\bR^n)$ to $\S'(\bR^n)$ (see e.g. \cite{Hor1}).
According to theorem 8.2.4. in \cite{Hor1}, we have for any projection $\pi : \bR^n \ra\bR^m$ for $m<n$ a well defined pull back map
$$\pi^* : \S'(\bR^m) \ra \S'(\bR^n).$$

Let us now recall the definition of the Wave Front Set of a distribution. Let $Z_{\bR^n}$ be the zero section of $T^*(\bR^n)$.

\begin{definition}
For a temperate distribution $u\in \S'(\bR^n)$, we define its \emph{Wave Front Set} to be the following subset of the cotangent bundle of $\bR^n$
$$
\wf(u) = \{ (x,\xi)\in T^*(\bR^n)- Z_{\bR^n} |\ \xi \in \Sigma_x(u)\}
$$
where
$$
\Sigma_x(u) = \cap_{\phi\in C^\infty_x(\bR^n)}\Sigma(\phi u).
$$
Here
$$
C^\infty_x(\bR^n) = \{ \phi \in C^\infty_0(\bR^n) | \phi(x) \neq 0\}
$$
and $\Sigma(v)$ are all $\eta \in \bR^n -\{0\}$ having no conic neighborhood $V$ such that
$$
|\hat{v}(\xi)| \leq C_N (1+ |\xi|)^{-N}, \ N\in\bZ_{>0},\  \xi\in V.
$$

\end{definition}

\begin{lemma}\label{wfdens}
Suppose $u$ is a bounded density on a $C^\infty$ sub-manifold $Y$ of $\bR^n$, then $u\in \S'(\bR^n)$ and
$$\wf(u) = \{(x,\xi) \in T^*(\bR^n) | x\in \supp u\mbox{, } \xi\neq 0 \mbox{ and } \xi(T_xY) = 0 \}.$$
\end{lemma}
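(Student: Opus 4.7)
The plan is to establish the two assertions separately: first the temperateness $u\in \S'(\bR^n)$, then the explicit description of $\wf(u)$ via a local computation after straightening $Y$.

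For temperateness, I would observe that if $Y$ is a $k$-dimensional $C^\infty$ submanifold of $\bR^n$, then its induced surface measure has at most polynomial growth, in the sense that the Hausdorff $k$-measure of $Y\cap B(0,R)$ is bounded by $CR^k$ for large $R$. Since $u$ has bounded density with respect to this surface measure, the pairing $u(f):=\int_Y f\,du$ converges absolutely for every $f\in \S(\bR^n)$, with
\[
|u(f)|\le \|u\|_\infty\int_Y |f|\,dS\le C_N \sup_{x\in\bR^n}(1+|x|)^{n+k+1}|f(x)|,
\]
for some constant $C_N$, which gives continuity in the Schwartz topology. Thus $u\in \S'(\bR^n)$.

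For the wave front set, I would use that $\wf(\cdot)$ is local and diffeomorphism-invariant. Near any $x_0\in Y\cap\supp u$, choose a chart straightening $Y$ to the coordinate subspace $\bR^k\times\{0\}$, and write $y=(y',y'')$ accordingly. In these coordinates $u$ becomes $g(y')\otimes\delta(y'')$ for a bounded density function $g$. For a cutoff $\phi\in C^\infty_0$ supported in this chart with $\phi(x_0)\ne 0$,
\[
\widehat{\phi u}(\xi',\xi'')=\int_{\bR^k}\phi(y',0)\,g(y')\,e^{-i\xi'\cdot y'}\,dy',
\]
which is independent of $\xi''$. In a conic neighborhood of any $\xi_0=(\xi_0',\xi_0'')$ with $\xi_0'\ne 0$ we have $|\xi'|\ge c|\xi|$, and integration by parts in the tangential variables (valid since $\phi\cdot g$ is smooth and compactly supported on $Y$) produces rapid decay of $\widehat{\phi u}$ in this cone. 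This gives $(x_0,\xi_0)\notin\wf(u)$ whenever $\xi_0|_{T_{x_0}Y}\ne 0$, establishing the inclusion $\wf(u)\subset \{(x,\xi):x\in\supp u,\ \xi\ne 0,\ \xi(T_xY)=0\}$.

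For the reverse inclusion, given $x_0\in\supp u$ and $\xi_0=(0,\xi_0'')$ with $\xi_0''\ne 0$, I would choose $\phi$ concentrated near $x_0$ with $\int_{\bR^k}\phi(y',0)g(y')\,dy'\ne 0$. Then $\widehat{\phi u}(0,t\xi_0'')$ equals this nonzero constant for all $t>0$, hence does not decay in any conic neighborhood of $\xi_0$, so $(x_0,\xi_0)\in \wf(u)$. Finally, points $x\notin\supp u$ contribute nothing to $\wf(u)$ by taking a cutoff vanishing on a neighborhood of $\supp u$. The main obstacle I expect is the careful bookkeeping for the straightening diffeomorphism — in particular, verifying that the transformed density remains bounded (and regular enough for the tangential integration by parts), and invoking the invariance of wave front sets under smooth changes of coordinates to transport the local conclusion back to the original chart.
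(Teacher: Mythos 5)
The paper itself offers no proof of this lemma --- it is quoted as a standard fact from distribution theory (essentially H\"ormander's description of the wave front set of a density carried by a submanifold, plus a temperateness remark) --- so your argument has to stand on its own. Your overall strategy is the right one and matches the standard proof: straighten $Y$ locally, reduce to $g(y')\otimes\delta(y'')$, and exploit the fact that $\widehat{\phi u}$ is independent of the conormal frequency variables $\xi''$. However, two steps fail at the stated level of generality. First, the inequality $\mathcal{H}^k\bigl(Y\cap B(0,R)\bigr)\le CR^k$ is false for a general $C^\infty$ submanifold: the graph of a smooth function such as $x\mapsto e^{-x^2}\sin\bigl(e^{x^3}\bigr)$ is a properly embedded curve in $\bR^2$ whose length over $[R,R+1]$ grows like $e^{R^3}$, so the pairing $u(f)=\int_Y fg\,dS$ need not converge for Schwartz $f$. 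Temperateness genuinely requires polynomial volume growth of $Y$ as an additional (implicit) hypothesis; it is automatic for the affine subspaces to which the paper applies the lemma, but it does not follow from smoothness alone, so you cannot derive it.

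Second, your tangential integration by parts is justified by the assertion that $\phi\cdot g$ is \emph{smooth}, whereas the hypothesis only provides a \emph{bounded} density. For a merely bounded measurable density the stated equality is actually false: taking $u=\chi_{[0,1]}(y_1)\delta(y_2)$ on $\bR^2$, the transform $\widehat{\phi(\cdot,0)\chi_{[0,1]}}(\xi_1)$ decays only like $|\xi_1|^{-1}$ because of the jumps, so $\wf(u)$ contains non-conormal directions over $(0,0)$ and $(1,0)$. You must therefore read ``bounded density'' as ``bounded $C^\infty$ density'' (certainly the intended reading) and say so explicitly, since the smoothness of $g$ is exactly what the whole upper bound on $\wf(u)$ rests on. Finally, a smaller logical point in the reverse inclusion: $\Sigma_{x_0}(u)$ is the intersection of $\Sigma(\phi u)$ over \emph{all} cutoffs $\phi$ with $\phi(x_0)\neq0$, so exhibiting one favorable $\phi$ does not suffice. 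The repair is easy: for any such $\phi$, $\widehat{\phi u}$ is independent of $\xi''$, so rapid decay in a conic neighborhood of a conormal direction would force $\widehat{\phi(\cdot,0)g}\equiv0$, hence $\phi u=0$, contradicting $x_0\in\supp u$.
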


In particular if $\supp u = Y$, then we see that $\wf(u)$ is the co-normal bundle of $Y$.

\begin{definition}
Let $u$ and $v$ be temperate distributions on $\bR^n$. Then we define
$$\wf(u) \oplus \wf(v) = \{ (x,\xi_1+\xi_2) \in T^*(\bR^n) | (x,\xi_1)\in \wf(u)\mbox{, }  (x,\xi_2) \in \wf(v)\}.$$
\end{definition}

\begin{theorem}\label{mult}
Let $u$ and $v$ be temperate distributions on $\bR^n$. If
$$\wf(u) \oplus \wf(v) \cap Z_{n} = \emptyset,$$
then the product of $u$ and $v$ exists and $uv\in \S'(\bR^n)$.
\end{theorem}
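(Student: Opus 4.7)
The strategy is the classical Hörmander/Fourier-localization approach, with a bookkeeping adjustment to remain in the temperate category $\S'$ rather than just $\D'$. The key principle is that for any compactly supported distribution $w$, the Paley--Wiener--Schwartz theorem makes $\hat w$ a smooth function of polynomial growth, and a covector $(x,\xi)$ lies outside $\wf(w)$ iff there exists $\chi\in C^\infty_c(\bR^n)$ with $\chi(x)\neq0$ such that $\widehat{\chi w}(\eta)$ decays faster than any polynomial in a conic neighborhood of $\xi$. The proposed product is then defined locally by the Fourier convolution formula
\[
\widehat{(\phi u)(\phi v)}(\xi):=\int_{\bR^n}\widehat{\phi u}(\xi-\eta)\,\widehat{\phi v}(\eta)\,d\eta,
\]
for $\phi\in C^\infty_c(\bR^n)$ a suitable bump, provided this integral makes sense.

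First I would establish the local product. Fix $x_0\in\bR^n$ and choose $\phi\in C^\infty_c$ with $\phi(x_0)\neq 0$ supported in a tiny neighborhood. Partition $\bR^n_\xi$ into conic pieces $\Gamma_u\cup\Gamma_v$, where $\Gamma_u$ is a small conic neighborhood of the set $\{\xi:(x_0,\xi)\in\wf(u)\}$ (empty if that fiber is trivial) and $\Gamma_v$ is its complement. For $\eta\in\Gamma_v$, the wave front definition gives rapid decay of $\widehat{\phi u}(\xi-\eta)$ uniformly in $\xi$ in some cone, while for $\eta\in\Gamma_u$, the hypothesis $\wf(u)\oplus\wf(v)\cap Z_n=\varnothing$ forces $-\eta\notin\{\xi:(x_0,\xi)\in\wf(v)\}$, whence $\widehat{\phi v}(\eta)$ is rapidly decaying there. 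Combining this with the polynomial growth of the non-decaying factor on each piece, the convolution converges absolutely, and in fact the resulting function is itself of polynomial growth in $\xi$ so that its inverse Fourier transform lies in $\S'(\bR^n)$.

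Next I would globalize via a locally finite partition of unity $\{\phi_j^2\}$. Define the would-be product by $uv:=\sum_j (\phi_j u)(\phi_j v)$, where each summand is constructed as above. Standard bilinearity and the fact that the local product on overlapping bumps is independent of the cutoff (verified by subtracting two Fourier-convolution formulas and showing the difference is a smooth function built from compactly supported distributions) yield a well-defined distribution that on each bump coincides with the usual product whenever one of $u,v$ is smooth. Keeping track of seminorm estimates for $\widehat{\phi_j u},\widehat{\phi_j v}$ --- uniform polynomial growth inherited from $u,v\in\S'$ and uniform bounds on derivatives of $\phi_j$ --- shows the sum pairs continuously with Schwartz functions, so $uv\in\S'(\bR^n)$.

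\textbf{Main obstacle.} The delicate step is the uniform conic decomposition in the second paragraph: the pointwise wave front condition at $x_0$ must be promoted to a quantitative statement with constants that depend only on seminorms of $\phi$, so that the convolution integrand is bounded by $C(1+|\xi|)^N(1+|\eta|)^{-M}$ with $M$ arbitrarily large on each piece. This requires compactness of the unit cosphere at $x_0$ and a finite subcover argument to choose a single cutoff $\phi$ that works simultaneously for every direction. Once that estimate is in place, absolute convergence of the convolution, the temperate growth of $\widehat{uv}$, and global assembly via the partition of unity are routine.
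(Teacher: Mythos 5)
Your overall strategy --- define the product locally by a Fourier--convolution formula, use a conic decomposition driven by the wave front hypothesis to get absolute convergence, then assemble with a partition of unity --- is the standard H\"ormander argument, and since the paper offers no proof at all (it simply cites Theorem IX.45 of Reed--Simon), any direct argument is by default a different route. Before the main issue, one fixable slip: in your conic decomposition the decay claims are attached to the wrong arguments of the integrand. With the integrand written as $\widehat{\phi u}(\xi-\eta)\,\widehat{\phi v}(\eta)$, the claim that for $\eta\in\Gamma_u$ the hypothesis forces $-\eta$ outside the $v$-fibre and ``whence $\widehat{\phi v}(\eta)$ is rapidly decaying'' does not follow: rapid decay of $\widehat{\phi v}$ \emph{at} $\eta$ requires $\eta$ itself, not $-\eta$, to avoid $\Sigma(\phi v)$, and this genuinely fails when the two fibres at $x_0$ are one and the same ray (a configuration the hypothesis permits). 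The correct pairing is: write the convolution as $\int\widehat{\phi u}(\eta)\widehat{\phi v}(\xi-\eta)\,d\eta$; for $\eta\notin\Gamma_u$ the first factor decays rapidly while the second is polynomially bounded; for $\eta\in\Gamma_u$ with $|\eta|\ge 2|\xi|$ the point $\xi-\eta$ lies in a conic neighbourhood of $-\Gamma_u$, which the hypothesis keeps away from $\Sigma(\phi v)$, so the \emph{second} factor decays rapidly in $|\xi-\eta|\gtrsim|\eta|$; the remaining region contributes polynomially. This yields the local product and is essentially your ``main obstacle'' paragraph done correctly.

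The genuine gap is the last step, where you assert that $\sum_j(\phi_j u)(\phi_j v)$ ``pairs continuously with Schwartz functions.'' The polynomial upper bounds on $\widehat{\phi_j u},\widehat{\phi_j v}$ are indeed uniform in $j$, but the rapid-decay constants $C_N$ coming from the wave front set definition are purely local and need not be uniform as the bump $\phi_j$ moves off to infinity; without that uniformity the local products can grow super-polynomially in $j$ and the sum is only an element of $\D'(\bR^n)$. This is not a removable technicality: take $n=1$, $u(x)=\exp(\imun e^x)$, which is bounded and smooth, hence lies in $\S'(\bR)$ with $\wf(u)=\varnothing$, and $v=\sum_{k\ge1}\delta'(\cdot-k)\in\S'(\bR)$; the hypothesis $\wf(u)\oplus\wf(v)\cap Z_1=\varnothing$ holds vacuously, yet $uv=\sum_{k\ge1}\bigl(u(k)\delta'(\cdot-k)-u'(k)\delta(\cdot-k)\bigr)$ has coefficients $|u'(k)|=e^k$ and does not extend continuously to $\S(\bR)$. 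So your argument, once the local bookkeeping is repaired, proves existence of the product as a distribution --- which is what the cited Reed--Simon theorem actually delivers --- but the temperateness asserted in the statement requires additional uniformity at infinity that neither the proposal nor the bare hypotheses supply; in the paper's applications it is secured separately by the very special form of the distributions being multiplied (densities on affine subspaces with exponential decay, as in Proposition~\ref{pf}).
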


This is the content of Theorem IX.45 in \cite{RS2}.

In order to be able to push forward certain temperate distributions along a projection map from $\bR^n$ to $\bR^m$, where $m< n$, we need to introduce
the space $\S'(\bR^n)_m$.

\begin{definition}

We denote by $\S(\bR^n)_m$ the set of all $\phi\in C^\infty(\bR^n)$ such that
$$
\sup_{x\in \bR^n} | x^\beta \partial^\alpha (\phi)(x)| < \infty
$$
for all multi-indices $\alpha$ and $\beta$ such that if $\alpha_i =0$ then $\beta_i=0$ for $n-m<i\leq n$.
We define $\S'(\bR^n)_m$ to be the continuous dual of $\S(\bR^n)_m$ with respect to these semi-norms.
\end{definition}

We observe that if $\pi : \bR^n \ra \bR^{n-m}$ is the projection onto the first $n-m$ coordinates, then $\pi^*(\S(\bR^{n-m})) \subset \S(\bR^n)_m$. This means we have a well defined push forward map
$$
\pi_* : \S'(\bR^n)_m \ra \S'(\bR^{n-m}).
$$

\begin{proposition}\label{pf}
Suppose $Y$ is a linear subspace in $\bR^n$, $u$ a density on $Y$ with exponential decay in all directions in $Y$. Suppose $\pi : \bR^n \ra \bR^m$ is a projection for some $m<n$. Then $u\in \S'(\bR^n)_m$ and $\pi_*(u)$ is a density on $\pi(Y)$ with exponential decay in all directions of the subspace $\pi(Y)\subset \bR^m$.
\end{proposition}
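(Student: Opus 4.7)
The plan is to split $Y$ along the kernel of $\pi$ and a linear complement, compute the pushforward by Fubini, then identify the result with a density on $\pi(Y)$ via the linear isomorphism between the complement and $\pi(Y)$.

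\emph{Step 1: Membership in $\S'(\bR^n)_m$.} By hypothesis there exist $C,\alpha>0$ with $|u(y)|\le C e^{-\alpha\|y\|}$ on $Y$, so $\|u\|_{L^1(Y)}<\infty$. Any $\phi\in \S(\bR^n)_m$ has finite supremum norm (the seminorm with $\alpha=\beta=0$), hence
\[
|\langle u,\phi\rangle|=\left|\int_Y u(y)\,\phi(y)\,dy\right|\le \|u\|_{L^1(Y)}\sup_{x\in\bR^n}|\phi(x)|,
\]
which yields continuity with respect to the seminorms defining $\S(\bR^n)_m$.

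\emph{Step 2: Fubini decomposition.} Set $K:=Y\cap\ker\pi$ and let $Y'$ be the orthogonal complement of $K$ in $Y$ with respect to the standard inner product; then $Y=K\oplus Y'$ and $\pi|_{Y'}\colon Y'\to\pi(Y)$ is a linear isomorphism. For $\psi\in\S(\bR^m)$, absolute integrability allows Fubini and gives
\[
\langle \pi_*(u),\psi\rangle=\int_Y u(y)\,\psi(\pi(y))\,dy=\int_{Y'}v(y')\,\psi(\pi(y'))\,dy',\qquad v(y'):=\int_K u(y'+k)\,dk.
\]
Orthogonality yields $\|y'+k\|^2=\|y'\|^2+\|k\|^2$, so $\|y'+k\|\ge\tfrac1{\sqrt2}(\|y'\|+\|k\|)$, and therefore
\[
|v(y')|\le C\int_K e^{-\alpha\|y'+k\|}\,dk\le C\,e^{-\alpha\|y'\|/\sqrt2}\int_K e^{-\alpha\|k\|/\sqrt2}\,dk,
\]
with the last integral finite. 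Thus $v$ is a density on $Y'$ with exponential decay in all directions of $Y'$.

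\emph{Step 3: Transfer to $\pi(Y)$.} Since $\pi|_{Y'}$ is a linear isomorphism of finite-dimensional normed spaces, there exists $c>0$ with $\|(\pi|_{Y'})^{-1}(z)\|\ge c\|z\|$ for all $z\in\pi(Y)$, and the Jacobian of $\pi|_{Y'}$ is a nonzero constant. Consequently $\pi_*(u)$ is identified with $(\pi|_{Y'})_*v$, a density on $\pi(Y)$ whose value at $z$ is bounded by a constant multiple of $e^{-\alpha c\|z\|/\sqrt2}$, giving exponential decay in all directions of $\pi(Y)$. The only genuine technical point is the justification of Fubini's theorem in Step~2, which is immediate from the exponential bound on $u$; the remaining manipulations are routine linear algebra, with the choice of orthogonal complement $Y'$ being harmless since $\pi_*(u)$ is intrinsically defined.
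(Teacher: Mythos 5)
Your proof is correct and follows the same route as the paper's own (very terse) argument: the paper simply observes that one integrates the density over the fibers of $\pi|_{Y}\colon Y\to\pi(Y)$ and that exponential decay is preserved, which is exactly what your decomposition $Y=K\oplus Y'$ with $v(y')=\int_K u(y'+k)\,dk$ and the estimate $\|y'+k\|\ge\tfrac1{\sqrt2}(\|y'\|+\|k\|)$ make precise. You have merely supplied the details the paper declares ``clear,'' so there is nothing to change.
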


\proof
Since the density $u$ on $Y$ has exponential decay in all directions in $Y$, it is clear that $u\in \S'(\bR^n)_m$, hence we can consider $\pi_*(u) \in S'(\bR^n)$. We now consider
\begin{equation}\label{piY}
\pi |_{Y} : Y \ra \pi(Y).
\end{equation}
Since the density $u$ on $Y$ has exponential decay in all direction in $Y$, it is clear we can integrate the density $u$ over the fibers of (\ref{piY})
to obtain the density $(\pi|_{Y})_*(u)$ on $\pi(Y)$, which has exponential decay in all direction in $\pi(Y)$.
It is clear that the density $(\pi|_{Y})_*(u)$ on $\pi(Y)$ represents $\pi_*(u) \in S'(\bR^n)$.
\eproof

\section{Appendix C. Categroids}
\label{Categroids}

We need a notion which is slightly more general than categories to define our TQFT functor.

\begin{definition}
A \emph{Categroid} $\C$ consist of a family of objects $\OBJ({\C})$ and for any pair of objects $A,B$ from $ \OBJ({\C})$ a set $\mor_{\C}(A,B)$ such that
the following holds
\begin{description}
\item[A] For any three objects $A,B,C$ there is a subset $K^{\C}_{A,B,C} \subset \mor_\C(A,B)\times \mor_\C(B,C)$, called the composable morphisms and a \emph{composition} map
$$
\circ : K^{\C}_{A,B,C} \ra  \mor_\C(A,C).
$$
such that composition of composable morphisms is associative.
\item[B] For any object $A$ we have an identity morphism $1_A \in  \mor_\C(A,A)$ which is composable with any morphism $f\in \mor_\C(A,B)$ or $g\in \mor_\C(B,A)$ and we have the equation
$$
1_A\circ f = f \mbox{, and } g \circ 1_A = g.
$$
\end{description}
\end{definition}

We observe that $\CC_a$ is a categroid with $K^{\CC_a}_{A,B,C}$ for three objects $A,B,C \in \CC_a$ given by
\begin{eqnarray*}
K^{\B_a}_{A,B,C} &=& \{ (X_1,X_2)\in \mor_{\CC_a}(A,B)\times \mor_{\CC_a}(B,C) | \\
& & \phantom{OS} H_2(X_1\circ X_2 - \Delta_0(X_1\circ X_2 ), \bZ) =0 \},
\end{eqnarray*}
where $\circ$ referes to the composition in $\CC$.
\begin{definition}
For the Categroid $\D$ of temperate distributions, the set of composable morphisms for three finite sets $n,m,l$ consists of the following subset
\begin{multline*}
K_{n,m,l}  =  \{(A,B)\in \S'(\bR^{n\cup m})\times  \S'(\bR^{m\cup l}) \mid \\
(\wf(\pi_{n,m}^*(A))\oplus \wf(\pi_{m,l}^*(B))) \cap Z_{n,m,l} = \varnothing, \
 \pi_{n,m}^*(A)\pi_{m,l}^*(B) \in \S'(\bR^{n\sqcup m\sqcup l})_m\}.
\end{multline*}
\end{definition}


\begin{thebibliography}{9999999}

\bibitem[A]{A} J.E.~Andersen, \emph{Asymptotic faithfulness of the
    quantum $\SU(n)$ representations of the mapping class groups}.
  Annals of Mathematics, {\bf 163} (2006), 347--368.

 \bibitem[AH]{AH} J.E Andersen \& S.K. Hansen, \emph{Asymptotics of the quantum
invariants of surgeries on the figure 8 knot}, Journal of Knot
theory and its Ramifications, {\bf 15} (2006), 1 -- 69.

\bibitem[At]{At}
 M.~Atiyah, \emph{Topological quantum field theories}, Inst. Hautes \'Etudes Sci. Publ. Math. No. {\bf68} (1988), 175--186.

 \bibitem[BB]{BB}
S. Baseilhac, R. Benedetti, \emph{Quantum hyperbolic geometry}. Algebr. Geom. Topol. {\bf7} (2007), 845--917.

\bibitem[BHMV1]{BHMV1} C. Blanchet, N. Habegger, G. Masbaum \&
  P. Vogel, \emph{Three-manifold invariants derived from the Kauffman
    Bracket}.  Topology {\bf 31} (1992), 685--699.


\bibitem[BHMV2]{BHMV2} C. Blanchet, N. Habegger, G. Masbaum \&
  P. Vogel, \emph{Topological Quantum Field Theories derived from the
    Kauffman bracket}. Topology {\bf 34} (1995), 883--927.


 \bibitem[BMS1]{BMS1} V.V. Bazhanov, V.V. Mangazeev, and S.M. Sergeev, \emph{Faddeev--Volkov solution of the
Yang--Baxter equation and discrete conformal symmetry}, Nucl. Phys. B{\bf 784} (2007),
234--258.

\bibitem[BMS2]{BMS2} V.V. Bazhanov, V.V. Mangazeev, and S.M. Sergeev, \emph{Quantum geometry of 3-dimensional lattices}, J. Stat. Mech. {\bf 0807} (2008), P07004.

  \bibitem[C]{C} A. Cassson, Private communication.

\bibitem[CF]{CF} L.O. Chekhov, V.V. Fock, \emph{Quantum Teichm\"uller spaces}, Theor. Math. Phys. {\bf 120} (1999), 1245--1259.

\bibitem[D]{D} T. Dimofte,
Quantum Riemann Surfaces in Chern-Simons Theory, preprint  arXiv:1102.4847

\bibitem[DFM]{DFM}
  R. Dijkgraaf, H. Fuji, M. Manabe,\emph{  The Volume Conjecture, Perturbative Knot Invariants, and Recursion Relations for Topological Strings},
Nucl. Phys. B{\bf 849} (2011), 166--211.

 \bibitem[DGLZ]{DGLZ}  T. Dimofte, S. Gukov, J. Lenells, D. Zagier, \emph{Exact Results for Perturbative Chern-Simons Theory with Complex Gauge Group}, Commun. Num. Theor. Phys. {\bf 3} (2009), 363--443.


    \bibitem[F]{F} L.D. Faddeev, \emph{Discrete Heisenberg-Weyl group and modular group}, Letters in Mathematical Physics {\bf 34} (3) (1995), 249--254.

 \bibitem[FKV]{FKV} L.D. Faddeev, R.M. Kashaev, and A.Yu. Volkov, \emph{Strongly coupled quantum discrete
Liouville theory. I: Algebraic approach and duality}, Commun. Math. Phys. {\bf 219} (2001), 199--219.

\bibitem[GKT]{GKT}
N. Geer, R. Kashaev, V. Turaev, \emph{Tetrahedral forms in monoidal categories and 3-manifold invariants}, arXiv:1008.3103, to appear in Crelle's Journal.


\bibitem[GR]{GR} G. Gasper, M. Rahman, \emph{Basic hypergeometric series}. With a foreword by Richard Askey. Second edition. Encyclopedia of Mathematics and its Applications, 96. Cambridge University Press, Cambridge, 2004.

\bibitem[H1]{H1} K. Hikami, \emph{Hyperbolicity of Partition Function and Quantum Gravity},
Nucl.Phys. B{\bf 616} (2001) 537--548.

\bibitem[H2]{H2} K. Hikami,\emph{
Generalized Volume Conjecture and the A-Polynomials -- the Neumann-Zagier Potential Function as a Classical Limit of Quantum Invariant},
J. Geom. Phys. {\bf 57} (2007), 1895--1940.

\bibitem[H\o r1]{Hor1} L. H\o rmander, \emph{Linear Partial Differential Operators } Die Grundlehren Der Mathematischen Wissenschaften, Band 116, pp. 45, Springer 2003.

\bibitem[H\o r2]{Hor2} L. H\o rmander, \emph{The Analysis of Linear Partial Differential Operators I: Distribution Theory and Fourier Analysis } CLassics in Mathematics, pp. 284, Springer 1963.

\bibitem[L]{L} M. Lackenby, \emph{Word hyperbolic Dehn surgery}, Invent. Math., {\bf 140} no. 2 (2000),  243--282.

\bibitem[K1]{K1} R.M. Kashaev, \emph{Quantization of Teichm\"uller spaces and the quantum dilogarithm}, Lett. Math. Phys. {\bf 43} (1998), 105--115.

 \bibitem[K2]{K2} R.M. Kashaev, \emph{The Liouville central charge in quantum Teichm\"uller theory}, Tr. Mat. Inst. Steklova 226 (1999),
Mat. Fiz. Probl. Kvantovoi Teor. Polya, 72--81; translation in Proc. Steklov Inst. Math., no. 3 (226) (1999), 63--71.
\bibitem[K3]{K3} R.M. Kashaev,\emph{ On the Spectrum of Dehn Twists in Quantum Teichm\"uller Theory},
Physics and Combinatorics 2000: Proceedings of Nagoya 2000 International Workshop, Graduate School of Mathematics, Nagoya University 21-26 August, (2000), 63--81.

\bibitem[K4]{K4}
R.M. Kashaev,  \emph{Quantum dilogarithm as a 6j-symbol}, Modern Phys. Lett. A{\bf 9} (1994), no. 40, 3757--3768.

\bibitem[K5]{K5}
R.M. Kashaev, \emph{Discrete Liouville equation and Teichm\"uller theory},  arXiv:0810.4352, to appear in: Handbook  in Teichm\"uller theory, Vol. III, IRMA Lect. Math. Theor. Phys.

\bibitem[K6]{K6}
R.M. Kashaev, \emph{The hyperbolic volume of knots from quantum
dilogarithm}, Lett. Math. Phys. {\bf39} (1997), 269--275.

\bibitem[L]{Lackenby} M. Lackenby, \emph{ Word hyperbolic Dehn surgery}, Invent. Math. 140 (2000) 243--282.

\bibitem[MM]{MM} H. Murakami, J. Murakami, \emph{The colored Jones polynomials and the simplicial volume of a knot}, Acta Mathematica 186(1) (2001), 85--104.

\bibitem[NZ]{NZ} W.D.~Neumann, D.~Zagier, \emph{Volumes of hyperbolic three-manifolds}, Topology {\bf24}, no. 3 (1985), 307--332.

\bibitem[Pen1]{Pen1} R. C.  Penner, \emph{The decorated Teichm\"{u}ller space of punctured surfaces}, Comm. Math. Phys. {\bf 113} (1987), no. 2, 299 -- 339.

\bibitem[Pen2]{Pen2} R. C.  Penner, \emph{Decorated Teichm\"{u}ller Theory}, The QGM Master Class Series, {\bf 1}, pp. 377, EMS Publisdhing House, Z\"{u}rich, 2012.

\bibitem[RS1]{RS1} M. Reed \& B. Simon, \emph{Methods of modern mathematical physics I: Functional analysis}, pp. 325, Academic Press, New York, 1970.

\bibitem[RS2]{RS2} M. Reed \& B. Simon, \emph{Methods of modern mathematical physics II: Fourier analysis, self-adjointness}, pp. 361, Academic Press, New York 1970.

\bibitem[RT1]{RT1} N. Reshetikhin \& V. Turaev, \emph{Ribbon graphs
    and their invariants derived fron quantum groups},
  Comm. Math. Phys.  {\bf 127} (1990), 1--26.

\bibitem[RT2]{RT2} N. Reshetikhin \& V. Turaev, \emph{Invariants of
    $3$-manifolds via link polynomials and quantum groups}, Invent.
  Math. {\bf 103} (1991), 547--597.

\bibitem[R]{R} I. Rivin: Combinatorial optimization in geometry, Adv. in Appl. Math. {\bf 31} (2003), 242--271
(2003).

\bibitem[S]{S}
 G.B.~Segal,  \emph{The definition of conformal field theory}, Differential geometrical methods in theoretical physics (Como, 1987), 165--171, NATO Adv. Sci. Inst. Ser. C Math. Phys. Sci., 250, Kluwer Acad. Publ., Dordrecht, 1988.

\bibitem[Te1]{Te} J. Teschner,\emph{ From Liouville theory to the quantum geometry of Riemann surfaces}. Prospects in mathematical physics, 231--246, Contemp. Math., 437, Amer. Math. Soc., Providence, RI, 2007.

\bibitem[Te2]{tesch1}J. Teschner, 
\emph{ An analog of a modular functor from quantized Teichm\"{u}ller theory}. Handbook of Teichm\"{u}ller theory. Vol. I, 685--760, 
IRMA Lect. Math. Theor. Phys., 11, Eur. Math. Soc., Z\"{u}rich, 2007. 

\bibitem[T]{T} V. G. Turaev, \emph{Quantum invariants of knots and
    3-manifolds}, de Gruyter Studies in Mathematics, 18.  Walter de
  Gruyter \& Co., Berlin, 1994.

\bibitem[V]{V}
   A.Yu. Volkov,  \emph{Noncommutative hypergeometry}, Comm. Math. Phys. {\bf 258} (2005), no. 2, 257--273.

\bibitem[W]{W}
 E.~Witten, \emph{Topological quantum field theory}, Comm. Math. Phys. {\bf117}, no. 3 (1988), 353--386.


\end{thebibliography}
\end{document}